\theoremstyle{definition}
\newtheorem{thm}{Theorem}[subsection]
\newtheorem{lem}{Lemma}[section]
\newtheorem{prop}[lem]{Proposition}
\newtheorem{defn}[lem]{Definition}
\newtheorem{cor}[lem]{Corollary}
\newtheorem{exa}{Example}
\newtheorem{remark}{Remark}
\newtheorem{conclusion}{Conclusion}
\numberwithin{equation}{section}
\numberwithin{equation}{subsection}
\newcommand{\C}{\mathbb{C}}
\title{\textsf{On deformations of metric Lie superalgebras}}
\author{Yong Yang $^{a,b}$
\\
\\ \small\textit{$^{a}$School of Mathematics, Jilin University, Changchun 130012, China}
\\ \small\textit{$^{b}$Institute of Physics, University of P\'{e}cs, P\'{e}cs 7622, Hungary}
  }
\date{ }
\begin{document}
\makeatletter
\newcommand{\rmnum}[1]{\romannumeral #1}
\newcommand{\Rmnum}[1]{\expandafter\@slowromancap\romannumeral #1@}
\makeatother
\maketitle
\let\thefootnote\relax\footnote{
E-mail address: yangyong195888221@163.com}

\begin{quotation}
\small\noindent \textbf{Abstract}:
In this paper we study  metric deformations of indecomposable  metric Lie superalgebras with dimensions $\leq 6$. We consider formal deformations obtained by even cocycles, because the odd ones can not be used for constructing formal deformations.

\vspace{0.2cm} \noindent{\textbf{Keywords}}: (metric) Lie superalgebra; cohomology; (metric) deformation

\vspace{0.2cm} \noindent{\textbf{Mathematics Subject Classification 2020}}:  14D15, 17B56

\end{quotation}
\setcounter{section}{-1}
\section{Introduction}
Lie algebras endowed with an invariant bilinear form are important objects  in Lie theory and mathematical physics. A Lie algebra endowed with a non-degenerate, symmetric, invariant bilinear form is called a metric (or quadratic) Lie algebra. Examples of metric Lie algebras are semi-simple Lie algebras with an invariant bilinear form  given by the Killing form, due to the Cartan's criterion. However, there are still many metric solvable Lie algebras although their Killing forms are degenerate. Thus, it seems not easy to classify metric Lie algebras, even in low-dimensional case. Recently, deformations of low-dimensional  complex metric Lie algebras and their real forms of dimension $\leq 6$ was given by A. Fialowski and M. Penkava \cite{F-P}.

In the study of metric Lie algebras, an effective method to construct them is by double extension, which can be regarded as a combination of  central extension and  semi-direct product, introduced by V. G. Kac \cite{Kac-double} for solvable Lie algebras.
Metric Lie algebras  were described inductively, based on double extensions, by A. Medina, P. Revoy in indecomposable, non-simple case \cite{M-R} and by G. Favre, L. Santharoubane in non-trivial center case \cite{F-S}. Another interesting method is $T^{\ast}$-extension which can be regarded as a semi-direct product of a Lie algebra and its dual space by means of the coadjoint representation, introduced by M. Bordemann \cite{MB}.
M. Bordemann proved that every finite-dimensional nilpotent metric Lie algebra of even
dimension can by obtained by a $T^{\ast}$-extension.
However, $T^{\ast}$-extension does not exhaust all possibilities for constructing metric Lie algebras of even dimension. In fact, for the case of dimension $\leq 6$, there is only one class of non-Abelian metric Lie algebras which can be obtained by a $T^{\ast}$-extension \cite{F-P}.

H. Benamor and S. Benayadi generalized the notion of double extension to metric Lie superalgebras by considering supersymmetric invariant bilinear form and proved that every non-simple indecomposable metric Lie superalgebra with 2-dimensional odd part is a double extension of a one-dimensional or
semi-simple Lie algebra \cite{B-B}. It also holds for the metric Lie superalgebra with 2-dimensional even part \cite{2}. In the past years  the study of metric Lie superalgebras became intensive. Many classes of metric Lie superalgebras have been studied \cite{2,B,A-B-B}. Different from what happens in the Lie case, the Killing form is not always non-degenerate on a semi-simple Lie superalgebra.  So it becomes more difficult to
classify metric Lie superalgebras, even for semi-simple cases.
An interesting fact that any indecomposable non-simple metric Lie superalgebra of dimension $\leq 6$ is a double extension of a 1-dimensional Lie algebra. A classification of indecomposable metric Lie superalgebras with dimension $\leq 6$ was obtained in \cite{class}.
However, there are no results for  deformations of metric Lie superalgebras even in low-dimensional cases.
For a metric Lie superalgebra, an interesting question is which deformations are  metric.

The aim of this paper is to study  metric deformations  of indecomposable metric Lie superalgebras with dimensions $\leq 6$. In Section 1, we recall the basic definitions and results for metric Lie superalgebras. In Section 2, we recall the cohomology and deformation theory for Lie superalgebras. In Section 3, we compute metric deformations of  indecomposable metric Lie superalgebras with dimensions $\leq 6$ and show which ones are metric among these deformations. Throughout the paper, the ground field is supposed to be the complex field $\C$.

\section{Metric Lie superalgebra}
In this section, we recall the basic facts for metric Lie superalgebras, introduced in \cite{B-B,2}.
We also introduce the notion of double extension and give some examples.

\subsection{Structure of metric Lie superalgebras}
Recall that a Lie superalgebra $\mathfrak{g}=\mathfrak{g}_{\bar{0}}\bigoplus \mathfrak{g}_{\bar{1}}$ is a $\mathbb{Z}_{2}$-graded algebra whose multiplication $[\ , \ ]$ satisfies the skew-supersymmery and super Jacobi identity, i.e.
$$(-1)^{|x||z|}[x,[y,z]]+(-1)^{|y||x|}[y,[z,x]]+(-1)^{|z||y|}[z,[x,y]]=0,$$
for any $x$, $y$, $z$ in $\mathfrak{g}$ \cite{Musson,Kac}. For a homogeneous element $x\in \mathfrak{g}_{\bar{0}}\bigcup \mathfrak{g}_{\bar{1}}$, write $|x|$ for the $\mathbb{Z}_{2}$-degree of $x$. Call an ideal of a Lie superalgebra a $\mathbb{Z}_{2}$-graded ideal.
 Call a homomorphism between superalgebras the one that preserves $\mathbb{Z}_{2}$-grading. The definition of solvable Lie superalgebras is the same as for Lie algebras. A Lie superalgebra $\mathfrak{g}$ is called \emph{simple (semi-simple)} if $\mathfrak{g}$ is not Abelian and does not contain nontrivial (solvable) ideals. Recall that a bilinear form $B$ on a Lie superalgebra $\mathfrak{g}$ is called \emph{invariant} if
$$B([x,y],z)=B(x,[y,z])$$
for any $x$, $y$, $z$ in $\mathfrak{g}$.
Due to the Cartan's criterion, there  always exists an invariant bilinear form on a semi-simple Lie algebra. Unfortunately, it is not true for semi-simple Lie superalgebras. Moreover, the following definition of metric Lie superalgebras can be viewed as a generalization of semi-simple Lie algebras to Lie superalgebras.

\begin{defn}\cite[Definition 1.10]{2}
A \emph{metric (or quadratic) Lie superalgebra} $(\mathfrak{g},B)$ is a Lie superalgebra $\mathfrak{g}$ with an even non-degenerate, supersymmetric, invariant bilinear form $B$. In this case, $B$ is called an invariant scalar product on $\mathfrak{g}$.
\end{defn}

\begin{defn}\cite[Definition 1.15]{2}
Two metric Lie superalgebras $(\mathfrak{g},B)$ and $(\mathfrak{g}',B')$ are called \emph{isometrically isomorphic} (or \emph{i-isomorphic}) if there exists  a Lie superalgebra isomorphism $f: \mathfrak{g}\rightarrow \mathfrak{g}'$ satisfying $B'(f(x),f(y))=B(x,y)$ for all $x,y\in \mathfrak{g}$. In this case, we write
 $\mathfrak{g}\mathop{\simeq}\limits^{i}\mathfrak{g}'$.
\end{defn}

The even part of a metric Lie superalgebra is  a metric Lie algebra, which is a Lie algebra with a non-degenerate, symmetric, invariant bilinear form. In fact, the characterization for metric Lie superalgebras can be reduced to their metric Lie algebras by the following lemma.

\begin{prop}\cite[Proposition 2.9]{B-B} \label{dim}
A Lie superalgebra $\mathfrak{g}$ is metric if and only if $\mathfrak{g}_{\bar{0}}$ is a metric Lie algebra with respect to a bilinear form $B_0$ and there exists a skew-symmetric non-degenerate   bilinear form $B_{1}$ on $\mathfrak{g}_{\bar{1}}$ such that, for all $x,y \in \mathfrak{g}_{\bar{1}}, z\in \mathfrak{g}_{\bar{0}}$,

(2) $B_0([x,y],z)=B_{1}(x,[y,z])$,

(1) ($\mathfrak{g}_{\bar{0}}$-invariant) $B_1([z,x],y)=-B_1(x,[z,y])$.
\end{prop}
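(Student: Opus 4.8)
The plan is to realize $B$ as the orthogonal direct sum of $B_0$ and $B_1$, and to pass all the hypotheses back and forth through the evenness, supersymmetry, non-degeneracy and invariance axioms; essentially every clause of the statement corresponds to one instance of the invariance identity $B([x,y],z)=B(x,[y,z])$ specialized to a fixed parity pattern. For the necessity direction, I would suppose $(\mathfrak{g},B)$ is metric and set $B_0:=B|_{\mathfrak{g}_{\bar 0}\times\mathfrak{g}_{\bar 0}}$ and $B_1:=B|_{\mathfrak{g}_{\bar 1}\times\mathfrak{g}_{\bar 1}}$. Since $B$ is even, $\mathfrak{g}_{\bar 0}$ and $\mathfrak{g}_{\bar 1}$ are mutually orthogonal, so $B$ is block-diagonal with respect to $\mathfrak{g}=\mathfrak{g}_{\bar 0}\oplus\mathfrak{g}_{\bar 1}$; hence its non-degeneracy forces both $B_0$ and $B_1$ to be non-degenerate. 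Supersymmetry makes $B_0$ symmetric and $B_1$ skew-symmetric, and restricting the invariance identity to $\mathfrak{g}_{\bar 0}$ shows $\mathfrak{g}_{\bar 0}$ is a metric Lie algebra. Feeding $x,y\in\mathfrak{g}_{\bar 1}$, $z\in\mathfrak{g}_{\bar 0}$ into $B([x,y],z)=B(x,[y,z])$ gives (2) at once, while the pattern $B([x,z],y)=B(x,[z,y])$ together with the bracket sign $[x,z]=-[z,x]$ (from $|x|=1$, $|z|=0$) yields (1).

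For the sufficiency direction, I would define $B$ on $\mathfrak{g}$ to restrict to $B_0$ on $\mathfrak{g}_{\bar 0}$, to $B_1$ on $\mathfrak{g}_{\bar 1}$, and to vanish on the cross terms $\mathfrak{g}_{\bar 0}\times\mathfrak{g}_{\bar 1}$ and $\mathfrak{g}_{\bar 1}\times\mathfrak{g}_{\bar 0}$. Then $B$ is even by construction, non-degenerate because $B_0$ and $B_1$ are, and supersymmetric because $B_0$ is symmetric, $B_1$ is skew, and the mixed values are zero. The only substantive point is invariance. Here the organizing observation is that both sides of $B([x,y],z)=B(x,[y,z])$ vanish unless $|x|+|y|+|z|\equiv 0\pmod 2$: on the left $[x,y]$ and $z$ must have equal parity for $B$ to be nonzero, and on the right the same constraint applies to $x$ and $[y,z]$. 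This cuts the eight parity patterns down to the four with even total, namely $(|x|,|y|,|z|)\in\{(0,0,0),(1,1,0),(0,1,1),(1,0,1)\}$; the first is the invariance of $B_0$ (given), and the second is precisely condition (2).

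The remaining two cases, $(0,1,1)$ and $(1,0,1)$, are where the real work lies, and I expect the $\mathbb{Z}_2$-sign bookkeeping there to be the main obstacle. For $(1,0,1)$, with $x,z\in\mathfrak{g}_{\bar 1}$ and $y\in\mathfrak{g}_{\bar 0}$, I would write $[x,y]=-[y,x]$ and apply (1) to convert $B_1([x,y],z)=-B_1([y,x],z)=B_1(x,[y,z])$; for $(0,1,1)$, with $x\in\mathfrak{g}_{\bar 0}$ and $y,z\in\mathfrak{g}_{\bar 1}$, I would use the symmetry of $B_0$ together with (2) on $B_0(x,[y,z])=B_0([y,z],x)=B_1(y,[z,x])$ and match it against $B_1([x,y],z)$ via (1) and $[x,z]=-[z,x]$. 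In each case the identity only closes up after the supersymmetry sign of $B$ and the skew-supersymmetry of the bracket are tracked consistently; once those signs are pinned down, the verification is formal and the proposition follows.
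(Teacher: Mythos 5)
The paper never proves this proposition itself: it is quoted with a citation to Benamor--Benayadi \cite[Proposition 2.9]{B-B}, so there is no in-paper argument to compare against. Judged on its own merits, your proof is correct and complete, and it is the natural argument: restriction of $B$ to the homogeneous components (necessity), orthogonal sum of $B_0$ and $B_1$ (sufficiency), with invariance reduced by the evenness of $B$ to the four parity patterns of even total degree. Your handling of the two mixed cases is right, and the sign bookkeeping you flag as the ``main obstacle'' is in fact harmless: every bracket you need to flip involves one even element, so $[x,z]=-(-1)^{|x||z|}[z,x]=-[z,x]$ carries no extra sign, and supersymmetry of $B$ only enters through the symmetry of $B_0$ and the skewness of $B_1$, both of which are hypotheses. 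One small observation you could add: your case $(0,1,1)$ computation, run in reverse, shows that condition (1) is actually a formal consequence of condition (2), skew-symmetry of $B_1$, and the identity $[x,y]=[y,x]$ for odd $x,y$ --- so the proposition's hypotheses are slightly redundant, which your proof makes visible but the bare statement does not.
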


\begin{cor}\label{odd}
From this Proposition, it follows that $\mathrm{dim}\ \mathfrak{g}_{\bar{1}}$  is even.
\end{cor}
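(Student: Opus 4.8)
The plan is to read the conclusion off directly from Proposition \ref{dim}. Since the corollary is stated in the context of a metric Lie superalgebra $(\mathfrak{g},B)$, that proposition already hands us the crucial ingredient: a \emph{skew-symmetric non-degenerate} bilinear form $B_1$ on the odd part $\mathfrak{g}_{\bar{1}}$. Everything else is a purely linear-algebraic observation, namely that a finite-dimensional vector space carrying a non-degenerate alternating form must have even dimension. So I would not need to touch the super Jacobi identity or the invariance conditions (1) and (2) at all; those are already absorbed into the statement of Proposition \ref{dim}, and only the skew-symmetry and non-degeneracy of $B_1$ are relevant here.

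To turn this into an argument, I would fix a basis of $\mathfrak{g}_{\bar{1}}$ and let $M$ be the Gram matrix of $B_1$, so that skew-symmetry reads $M^{T}=-M$. Writing $n=\dim \mathfrak{g}_{\bar{1}}$ and taking determinants gives
$$\det M=\det\!\left(M^{T}\right)=\det(-M)=(-1)^{n}\det M.$$
If $n$ were odd, this identity would force $\det M=0$, contradicting the non-degeneracy of $B_1$; hence $n$ must be even. Because the ground field is $\C$, of characteristic zero, the factor $2$ appearing implicitly in $\det M=-\det M$ is invertible, so there is none of the degeneracy one encounters in characteristic two. There is essentially no obstacle to overcome: the single sign $(-1)^{n}$ is exactly what detects the parity of $n$, and that determinant identity is the whole content of the proof.
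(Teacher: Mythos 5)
Your proposal is correct and follows exactly the route the paper intends: Proposition \ref{dim} supplies a non-degenerate skew-symmetric form $B_1$ on $\mathfrak{g}_{\bar{1}}$, and the paper leaves implicit precisely the linear-algebra fact you spell out via the Gram-matrix determinant identity $\det M = (-1)^{n}\det M$. Your remark about characteristic two is a sensible aside but is not needed here, since the ground field is $\C$.
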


Let $(\mathfrak{g},B)$ be a metric Lie superalgebra. An ideal $I$ of $\mathfrak{g}$ is called non-degenerate if $B|_{I\times I}$ is non-degenerate. Obviously, if $I$ is a non-degenerate ideal, then $(I,B|_{I\times I})$ is also a metric Lie superalgebra. It is known that any semi-simple Lie algebra can be decomposed into the direct sum of its simple ideals. The following proposition is analogous to what happens in the semi-simple Lie algebra case.

\begin{prop}\cite[Proposition 2.6]{B-B}\label{de}
 Let $(\mathfrak{g},B)$ be a metric Lie superalgebra. Then
 $$\mathfrak{g}=\bigoplus\limits_{i=1}^{r}\mathfrak{g}_{i},$$
such that, for all $1\leq i\leq r$,

(1) $\mathfrak{g}_{i}$ is a non-degenerate ideal of $\mathfrak{g}$.

(2) $\mathfrak{g}_{i}$ contains no nontrivial non-degenerate ideal of $\mathfrak{g}$.

(3) $B(\mathfrak{g}_{i},\mathfrak{g}_{j})=0$ for all $i\neq j$.
\end{prop}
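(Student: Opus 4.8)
The plan is to proceed by induction on $\dim\mathfrak{g}$, with the whole argument resting on a single splitting lemma: \emph{if $I$ is a non-degenerate ideal of $(\mathfrak{g},B)$, then $\mathfrak{g}=I\oplus I^{\perp}$ is an orthogonal direct sum of ideals}, where $I^{\perp}=\{x\in\mathfrak{g}:B(x,I)=0\}$. Once this is available, the decomposition is produced by repeatedly peeling off a minimal non-degenerate ideal and applying the lemma to its orthogonal complement.

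To prove the splitting lemma I would check four points. First, $I^{\perp}$ is $\mathbb{Z}_2$-graded: since $B$ is even it pairs $\mathfrak{g}_{\bar 0}$ only with $\mathfrak{g}_{\bar 0}$ and $\mathfrak{g}_{\bar 1}$ only with $\mathfrak{g}_{\bar 1}$, so, using that $I$ is graded, the homogeneous components of any $x\in I^{\perp}$ again lie in $I^{\perp}$. Second, $I^{\perp}$ is an ideal: for homogeneous $x\in I^{\perp}$, $y\in\mathfrak{g}$ and $z\in I$, invariance gives $B([x,y],z)=B(x,[y,z])$, and $[y,z]\in I$ forces the right-hand side to vanish, so $[x,y]\in I^{\perp}$. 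Third, $\mathfrak{g}=I\oplus I^{\perp}$: the intersection $I\cap I^{\perp}$ is exactly the radical of $B|_{I\times I}$, which is trivial because $I$ is non-degenerate, while non-degeneracy of $B$ on $\mathfrak{g}$ gives $\dim I+\dim I^{\perp}=\dim\mathfrak{g}$; hence the sum is direct and exhausts $\mathfrak{g}$. Fourth, $I^{\perp}$ is itself non-degenerate: if $x\in I^{\perp}$ were orthogonal to all of $I^{\perp}$, then, being already orthogonal to $I$, it would lie in the radical of $B$ on $\mathfrak{g}$, forcing $x=0$. Finally, since $I$ and $I^{\perp}$ are ideals meeting trivially, $[I,I^{\perp}]\subseteq I\cap I^{\perp}=0$, so property (3) holds for this two-term splitting.

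With the lemma in hand, the induction runs as follows. If $\mathfrak{g}$ contains no nontrivial non-degenerate ideal, take $r=1$ and $\mathfrak{g}_1=\mathfrak{g}$. Otherwise choose a non-degenerate ideal $\mathfrak{g}_1$ of minimal dimension among the nontrivial ones; by minimality it contains no nontrivial non-degenerate ideal of $\mathfrak{g}$, giving (2), and the lemma yields the orthogonal splitting $\mathfrak{g}=\mathfrak{g}_1\oplus\mathfrak{g}_1^{\perp}$ with $(\mathfrak{g}_1^{\perp},B|_{\mathfrak{g}_1^{\perp}})$ a metric Lie superalgebra of strictly smaller dimension. Applying the inductive hypothesis to $\mathfrak{g}_1^{\perp}$ decomposes it as $\bigoplus_{i\ge 2}\mathfrak{g}_i$, and concatenating with $\mathfrak{g}_1$ gives the claimed decomposition; the orthogonality of distinct summands in (3) follows from $B(\mathfrak{g}_1,\mathfrak{g}_1^{\perp})=0$ together with the orthogonality already present inside $\mathfrak{g}_1^{\perp}$.

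The one point that needs care — and the only place where the global structure enters — is the compatibility of property (2) across the induction: the summands $\mathfrak{g}_i$ with $i\ge 2$ are produced as non-degenerate ideals of $\mathfrak{g}_1^{\perp}$, and I must check they are in fact ideals of $\mathfrak{g}$ that contain no nontrivial non-degenerate ideal of $\mathfrak{g}$. This follows because $[\mathfrak{g}_1,\mathfrak{g}_1^{\perp}]=0$, so every ideal of $\mathfrak{g}_1^{\perp}$ is automatically an ideal of $\mathfrak{g}=\mathfrak{g}_1\oplus\mathfrak{g}_1^{\perp}$, and for a subspace of $\mathfrak{g}_1^{\perp}$ the property of being a non-degenerate subideal is the same whether measured in $\mathfrak{g}_1^{\perp}$ or in $\mathfrak{g}$. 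I expect this bookkeeping, rather than any hard computation, to be the main obstacle.
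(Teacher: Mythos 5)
Your proof is correct, and it is essentially the canonical argument: the paper itself offers no proof of this proposition (it is quoted verbatim from \cite[Proposition 2.6]{B-B}), and the proof in that reference is exactly your scheme — split off a minimal nontrivial non-degenerate graded ideal via the orthogonal complement $I^{\perp}$, verify that $I^{\perp}$ is a non-degenerate ideal with $\mathfrak{g}=I\oplus I^{\perp}$ and $[I,I^{\perp}]=0$, and induct on dimension. Your attention to the transfer of property (2) across the induction (ideals of $\mathfrak{g}_1^{\perp}$ versus ideals of $\mathfrak{g}$, via $[\mathfrak{g}_1,\mathfrak{g}_1^{\perp}]=0$) is the right bookkeeping and is handled correctly.
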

\begin{defn}
In the above proposition, if $r=1$, $\mathfrak{g}$ is called \emph{indecomposable}. Otherwise, $\mathfrak{g}$ is called \emph{decomposable}.
\end{defn}

\begin{lem}\label{abel}
For a metric Lie superalgebra $\mathfrak{g}$, if $\mathrm{dim}\ \mathfrak{g}_{\bar{0}}\leq 1$, then $\mathfrak{g}$ is Abelian.
\begin{proof}
It is sufficient to prove the Lemma in the 1-dimensional even part case. Suppose that $(\mathfrak{g},B)$ is a metric Lie superalgebra, spanned by
$$\{e_1\mid e_2,\cdots,e_n\}.$$
Because of the invariance of $B$, we obtain
\begin{equation*}
B(e_{1}, [\mathfrak{g}_{\bar{1}},\mathfrak{g}_{\bar{1}}])=B([e_{1}, \mathfrak{g}_{\bar{1}}],\mathfrak{g}_{\bar{1}}).
\end{equation*}
Thus, $[\mathfrak{g}_{\bar{1}},\mathfrak{g}_{\bar{1}}]=0$ if and only if $[e_{1}, \mathfrak{g}_{\bar{1}}]=0$.
Suppose that  $[e_i,e_j]=k_{ij}e_{1}$ for any $2\leq i, j\leq n$.
In order to prove $\mathfrak{g}$ is Abelian, it is sufficient to prove $k_{ij}=0$.  At first, we claim that $k_{ii}=0$.
 By the super Jacobi identity, for any $2\leq i\leq n$,
$[[e_i,e_i],e_i]=0$. If there exists  $i_{0}$ such that $k_{i_{0}i_{0}}\neq 0$, then $[e_{1},e_{i_{0}}]=0$.
From
\begin{equation*}
  B([e_1,e_{i_{0}}],e_{i_{0}})=B(e_1,[e_{i_{0}},e_{i_{0}}])=0,
\end{equation*}
we obtain that
$B(e_1,e_1)=0$. It is a contradiction to the non-degenerate property of $B$.
In addition, we  obtain that for $2\leq i\neq j\leq n$, $[[e_i,e_j],e_j]=0$ from the super Jacobi identity of $e_i$, $e_j$ and $e_j$. If there exist $i_{1}$, $j_{1}$ such that $k_{i_{1},j_{1}}\neq 0$, then we have $[e_1,e_{j_{1}}]=0$. From
\begin{equation*}
  B([e_1,e_{j_{1}}],e_{i_{1}})=B(e_1,[e_{j_{1}},e_{i_{1}}])=0,
\end{equation*}
we obtain that
$B(e_1,e_1)=0$. It is a contradiction to the non-degenerate property of $B$. The proof is complete.
\end{proof}
\end{lem}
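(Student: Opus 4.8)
The plan is to reduce at once to the case $\dim \mathfrak{g}_{\bar{0}}=1$: if $\mathfrak{g}_{\bar{0}}=0$ then $[\mathfrak{g}_{\bar{1}},\mathfrak{g}_{\bar{1}}]\subseteq \mathfrak{g}_{\bar{0}}=0$ and every bracket vanishes trivially, so there is nothing to prove. Assume then $\mathfrak{g}_{\bar{0}}=\langle e_1\rangle$ and $\mathfrak{g}_{\bar{1}}=\langle e_2,\dots,e_n\rangle$. Since $\mathfrak{g}_{\bar{0}}$ is one-dimensional, $[e_1,e_1]=0$ automatically, so the only brackets that can be nonzero are the even--odd ones $[e_1,e_i]\in\mathfrak{g}_{\bar{1}}$ and the odd--odd ones $[e_i,e_j]=k_{ij}e_1\in\mathfrak{g}_{\bar{0}}$. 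Thus it suffices to kill both families.

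First I would record two consequences of non-degeneracy and invariance. Because $B$ is even and non-degenerate, its restriction to the one-dimensional $\mathfrak{g}_{\bar{0}}$ is non-degenerate, so $B(e_1,e_1)\neq 0$; likewise $B|_{\mathfrak{g}_{\bar{1}}}$ is non-degenerate. Invariance gives $B([e_1,x],y)=B(e_1,[x,y])$ for $x,y\in\mathfrak{g}_{\bar{1}}$, which I would use in the equivalent form
\[
B(e_1,[\mathfrak{g}_{\bar{1}},\mathfrak{g}_{\bar{1}}])=B([e_1,\mathfrak{g}_{\bar{1}}],\mathfrak{g}_{\bar{1}}).
\]
Combined with the two non-degeneracies, this yields the dichotomy $[\mathfrak{g}_{\bar{1}},\mathfrak{g}_{\bar{1}}]=0 \iff [e_1,\mathfrak{g}_{\bar{1}}]=0$; hence it is enough to prove $[\mathfrak{g}_{\bar{1}},\mathfrak{g}_{\bar{1}}]=0$, i.e.\ that every $k_{ij}=0$.

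To kill the $k_{ij}$ I would feed the super Jacobi identity against this invariance. For the diagonal entries, the identity applied to the triple $(e_i,e_i,e_i)$ forces $[[e_i,e_i],e_i]=0$, i.e.\ $k_{ii}[e_1,e_i]=0$; if some $k_{ii}\neq 0$ then $[e_1,e_i]=0$, and invariance gives $0=B([e_1,e_i],e_i)=B(e_1,[e_i,e_i])=k_{ii}B(e_1,e_1)$, contradicting $B(e_1,e_1)\neq 0$. For the off-diagonal entries the same scheme works with the triple $(e_i,e_j,e_j)$, which yields $[[e_i,e_j],e_j]=k_{ij}[e_1,e_j]=0$; if $k_{ij}\neq 0$ then $[e_1,e_j]=0$ and $0=B([e_1,e_j],e_i)=B(e_1,[e_j,e_i])=k_{ij}B(e_1,e_1)$, again impossible. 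Hence all $k_{ij}=0$, so $[\mathfrak{g}_{\bar{1}},\mathfrak{g}_{\bar{1}}]=0$, and by the dichotomy $[e_1,\mathfrak{g}_{\bar{1}}]=0$ as well; therefore $\mathfrak{g}$ is Abelian.

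The routine-but-delicate part is the sign bookkeeping in the super Jacobi identity for three odd elements (each cyclic term acquiring a factor $-1$), which is precisely what produces the clean relations $[[e_i,e_i],e_i]=0$ and $[[e_i,e_j],e_j]=0$. The conceptual heart, which I would flag as the real point rather than an obstacle, is that a one-dimensional even part forces $B(e_1,e_1)\neq 0$, so the invariance identity converts any nonzero structure constant into the impossible statement $B(e_1,e_1)=0$.
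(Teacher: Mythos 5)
Your proposal is correct and follows essentially the same route as the paper: the same dichotomy $[\mathfrak{g}_{\bar{1}},\mathfrak{g}_{\bar{1}}]=0 \iff [e_1,\mathfrak{g}_{\bar{1}}]=0$ from invariance, the same use of the super Jacobi identity on the triples $(e_i,e_i,e_i)$ and $(e_i,e_j,e_j)$, and the same contradiction with $B(e_1,e_1)\neq 0$. Your additions (explicitly disposing of the $\dim\mathfrak{g}_{\bar{0}}=0$ case and spelling out why $B(e_1,e_1)\neq 0$) are harmless clarifications of steps the paper leaves implicit.
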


\subsection{Double extension for metric Lie superalgebras}

We introduce the theory of  double extension in this section, which is a important method to construct metric Lie superalgebras. For more details, readers are referred to \cite{B-B,2,suvey,F-P,class}.
\begin{defn}\cite[Chapter \Rmnum{1},1.4]{Kac}
Let $\mathfrak{g}$ be a Lie superalgebra and $D$ a homogeneous linear transformation of $\mathfrak{g}$. $D$ is called a \emph{derivation} of $\mathfrak{g}$ if
$$D([x,y])=[D(x),y]+(-1)^{|D||x|}[x,D(y)],\ x,y\in\mathfrak{g}.$$
\end{defn}

\begin{defn}\cite[Definition 3.4]{B-B}
Let $\mathfrak{g}$ be a Lie superalgebra and $B$ a bilinear form on $\mathfrak{g}$. Let $D$ be a derivation of $\mathfrak{g}$. $D$ is called \emph{skew-supersymmetric} with respect to $B$ if
$$B(D(x),y)=-(-1)^{|D||x|}B(x,D(y)),\ x,y\in\mathfrak{g}.$$
\end{defn}
Denote by $\mathrm{Der}(\mathfrak{g})$ and $\mathrm{Der}(\mathfrak{g},B)$ the derivation space of $\mathfrak{g}$ and the
skew-supersymmetric derivation space of $\mathfrak{g}$ with respect to $B$, respectively. By the above definitions, $\mathrm{Der}(\mathfrak{g})$ and $\mathrm{Der}(\mathfrak{g},B)$ are both Lie-super subalgebras of the general linear superalgebra $\mathfrak{gl}$.

\begin{thm}\cite[Theorem 1]{B-B}\label{B-B}
Let $(\mathfrak{g}_{1},B_{1})$ be a metric Lie superalgebra, $\mathfrak{g}_2$ a Lie superalgebra and $\psi:
\mathfrak{g}_2\rightarrow \mathrm{Der}(\mathfrak{g}_{1},B_1)\subseteq  \mathrm{Der}(\mathfrak{g}_{1})$ a morphism of Lie superalgebras.
Let $\varphi$ be the linear mapping from $\mathfrak{g}_{1}\times \mathfrak{g}_{1}$ to $\mathfrak{g}_{2}^{\ast}$, defined by
$$\varphi(x,y)(z)=(-1)^{(|x|+|y|)|z|}B_{1}(\psi(z)(x),y),\ x,y\in \mathfrak{g}_{1}, z\in \mathfrak{g}_{2}.$$
Let $\pi$ be the coadjoint representation of $\mathfrak{g}_{2}$. The the vector space $\mathfrak{g}=\mathfrak{g}_{2}\bigoplus \mathfrak{g}_{1}\bigoplus \mathfrak{g}_{2}^{\ast}$ with the products
\begin{eqnarray*}
&&[x_2,y_2]_\mathfrak{g}=[x_2,y_2]_{\mathfrak{g}_2},\quad [x_2,y_1]_\mathfrak{g}=\psi(x_2)y_1,\quad [x_2,f]_{\mathfrak{g}}=\pi(x_2)f,   \\
 &&[x_1,y_1]_\mathfrak{g}=[x_1,y_1]_{\mathfrak{g}_1}+\varphi(x_1,y_1),
\end{eqnarray*}
where $x_1, y_1\in \mathfrak{g}_{1}$, $x_2, y_2\in\mathfrak{g}_{2}$, $f\in \mathfrak{g}_{2}^{\ast}$, is a Lie superalgebra.
Moreover, if $B_2$ is an even, supersymmetric, invariant (not necessary non-degenerate) bilinear form on $\mathfrak{g}_{2}$, then the bilinear form $B_\mathfrak{g}$, defined on $\mathfrak{g}$ by
$$B_{\mathfrak{g}}(x_2, y_2)=B_{2}(x_2, y_2),\quad  B_{\mathfrak{g}}(f, y_2)=f(y_2),\quad B_{\mathfrak{g}}(x_{1},y_{1})=B_{1}(x_{1},y_{1}),$$
where $x_1, y_1\in \mathfrak{g}_{1}$, $x_2, y_2\in\mathfrak{g}_{2}$,  $f\in \mathfrak{g}_{2}^{\ast}$, is an invariant scalar product on $\mathfrak{g}$.
\end{thm}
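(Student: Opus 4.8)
The plan is to verify first that the stated bracket makes $\mathfrak{g}=\mathfrak{g}_2\oplus\mathfrak{g}_1\oplus\mathfrak{g}_2^{\ast}$ a Lie superalgebra and then that $B_{\mathfrak{g}}$ is an invariant scalar product. Throughout I would fix the $\mathbb{Z}_2$-grading on $\mathfrak{g}$ to be the direct-sum grading, declaring $f\in\mathfrak{g}_2^{\ast}$ homogeneous of degree $|f|$ when it vanishes on the opposite-degree part of $\mathfrak{g}_2$. Before touching Jacobi I would record that the unlisted brackets vanish, i.e. $[\mathfrak{g}_1,\mathfrak{g}_2^{\ast}]=0$ and $[\mathfrak{g}_2^{\ast},\mathfrak{g}_2^{\ast}]=0$, so that $\mathfrak{g}_2^{\ast}$ is an abelian ideal and $[\mathfrak{g}_1,\mathfrak{g}_1]\subseteq\mathfrak{g}_1\oplus\mathfrak{g}_2^{\ast}$. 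A short degree count, using that $\psi(x_2)$ and $\pi(x_2)$ are homogeneous of degree $|x_2|$ and that $B_1(\psi(z)x,y)$ is nonzero only when $|z|=|x|+|y|$ (since $B_1$ is even), shows every bracket is graded, so $\varphi(x,y)$ indeed lands in the degree-$(|x|+|y|)$ part of $\mathfrak{g}_2^{\ast}$. For skew-supersymmetry the only non-formal point is $\varphi(x,y)=-(-1)^{|x||y|}\varphi(y,x)$, which I would prove by evaluating on $z\in\mathfrak{g}_2$ and pushing $\psi(z)$ across $B_1$: skew-supersymmetry of $\psi(z)$ followed by supersymmetry of $B_1$ turns $B_1(\psi(z)x,y)$ into $-(-1)^{|x||y|}B_1(\psi(z)y,x)$, the factors $(-1)^{|z||x|}$ cancelling in pairs. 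The remaining skew relations merely define the reversed brackets $[y_1,x_2]$ and $[f,x_2]$.

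The real work is the super Jacobi identity, which I would organize by how many of the three arguments lie in each summand. Every triple with two or three arguments in $\mathfrak{g}_2^{\ast}$, or with an $\mathfrak{g}_1$-argument paired against $\mathfrak{g}_2^{\ast}$, collapses to $0=0$ by the vanishing brackets above. Three arguments in $\mathfrak{g}_2$ reduce to the Jacobi identity of $\mathfrak{g}_2$; the mixed type $(\mathfrak{g}_2,\mathfrak{g}_2,\mathfrak{g}_1)$ is precisely the statement that $\psi$ is a morphism of Lie superalgebras, and $(\mathfrak{g}_2,\mathfrak{g}_2,\mathfrak{g}_2^{\ast})$ that $\pi$ is a representation.

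The two genuinely delicate cases are $(\mathfrak{g}_2,\mathfrak{g}_1,\mathfrak{g}_1)$ and $(\mathfrak{g}_1,\mathfrak{g}_1,\mathfrak{g}_1)$, and in each I would project onto the $\mathfrak{g}_1$- and $\mathfrak{g}_2^{\ast}$-components separately. The $\mathfrak{g}_1$-components give, respectively, the derivation property of $\psi(x_2)$ and the Jacobi identity of $\mathfrak{g}_1$, both of which hold by hypothesis. The $\mathfrak{g}_2^{\ast}$-components produce two cocycle-type identities for $\varphi$: a $\psi$-equivariance relation of the shape $\pi(x_2)\varphi(y_1,z_1)=\varphi(\psi(x_2)y_1,z_1)\pm\varphi(y_1,\psi(x_2)z_1)$, and the vanishing of the graded cyclic sum of $\varphi(x_1,[y_1,z_1]_{\mathfrak{g}_1})$. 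I expect these to be the main obstacle. Each must be checked by evaluating on an arbitrary $z\in\mathfrak{g}_2$, rewriting through the defining formula for $\varphi$, and then invoking the invariance of $B_1$ together with the fact that $\psi(z)$ is a skew-supersymmetric derivation; the equivariance relation additionally needs $\psi$ to be a morphism, so that the coadjoint action $\pi(x_2)$ on $\mathfrak{g}_2^{\ast}$ matches the action transported through $\psi$. The careful bookkeeping of the $\mathbb{Z}_2$-signs is where errors are most likely to creep in, and I would carry the signs symbolically rather than case-splitting on parities.

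For the bilinear form I would check evenness and supersymmetry directly: $B_2$ and $B_1$ supply these on $\mathfrak{g}_2\times\mathfrak{g}_2$ and $\mathfrak{g}_1\times\mathfrak{g}_1$, while the pairing $B_{\mathfrak{g}}(f,y_2)=f(y_2)$ is nonzero only when $|f|=|y_2|$, which forces evenness, and supersymmetry of this block is a one-line sign check. Non-degeneracy is essentially structural: since $B_{\mathfrak{g}}(\mathfrak{g}_1,\mathfrak{g}_2\oplus\mathfrak{g}_2^{\ast})=0$, a radical vector splits into three parts, its $\mathfrak{g}_2^{\ast}$-part dies against $\mathfrak{g}_2$ and its $\mathfrak{g}_2$-part against $\mathfrak{g}_2^{\ast}$ through the perfect duality pairing (regardless of whether $B_2$ is degenerate), and its $\mathfrak{g}_1$-part dies by non-degeneracy of $B_1$. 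Invariance is the last point, and I would once more reduce to triples; the crucial one is $B_{\mathfrak{g}}([x_1,y_1],z_2)=B_{\mathfrak{g}}(x_1,[y_1,z_2])$, whose left side equals $\varphi(x_1,y_1)(z_2)$ and whose right side equals $-(-1)^{|y_1||z_2|}B_1(x_1,\psi(z_2)y_1)$. The same manipulation used for skew-supersymmetry of $\varphi$ shows these agree, which is exactly why $\varphi$ is defined as it is. The triples internal to $\mathfrak{g}_2\oplus\mathfrak{g}_2^{\ast}$ reduce to invariance of $B_2$ and to the coadjoint representation being dual to the adjoint one, and all remaining triples vanish identically.
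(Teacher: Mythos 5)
The paper itself contains no proof of this statement: it is imported verbatim from Benamor--Benayadi \cite[Theorem 1]{B-B} and used as a black box (Corollary \ref{double} simply says it ``follows from the Theorem''), so the only meaningful comparison is with the original source, whose proof is precisely the kind of direct verification you outline. Your proposal is correct and complete as a plan: the grading convention on $\mathfrak{g}_{2}^{\ast}$, the observation that $\varphi(x,y)$ is homogeneous of degree $|x|+|y|$ because $B_{1}$ is even, the reduction of skew-supersymmetry to the identity $\varphi(x,y)=-(-1)^{|x||y|}\varphi(y,x)$ via skew-supersymmetry of $\psi(z)$ and supersymmetry of $B_{1}$, and the block-by-block organization of the super Jacobi identity are exactly the required checks. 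You also correctly isolate where each hypothesis enters: $(\mathfrak{g}_{2},\mathfrak{g}_{2},\mathfrak{g}_{1})$ is the morphism property of $\psi$, $(\mathfrak{g}_{2},\mathfrak{g}_{2},\mathfrak{g}_{2}^{\ast})$ is $\pi$ being a representation, the $\mathfrak{g}_{2}^{\ast}$-component of $(\mathfrak{g}_{2},\mathfrak{g}_{1},\mathfrak{g}_{1})$ is the $\psi$-equivariance of $\varphi$ (needing the morphism property again), and the $\mathfrak{g}_{2}^{\ast}$-component of $(\mathfrak{g}_{1},\mathfrak{g}_{1},\mathfrak{g}_{1})$ is the vanishing of the graded cyclic sum of $\varphi(x_{1},[y_{1},z_{1}]_{\mathfrak{g}_{1}})$, which indeed follows from invariance of $B_{1}$ plus $\psi(z)$ being a skew-supersymmetric derivation. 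The non-degeneracy argument is also sound, with the one small ordering caveat that pairing the radical vector against $\mathfrak{g}_{2}$ mixes $B_{2}(x_{2},\cdot)$ with $f(\cdot)$, so one must first kill the $\mathfrak{g}_{2}$-component by pairing against $\mathfrak{g}_{2}^{\ast}$ and only then kill $f$; your sketch accommodates this. In short: nothing to compare inside this paper, and your blind reconstruction agrees in substance with the proof in the cited reference.
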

\begin{defn}
In the above Theorem, the metric Lie superalgebra $(\mathfrak{g},B_{\mathfrak{g}})$ is called the \emph{double extension} of $(\mathfrak{g}_1,B_1)$ by $\mathfrak{g}_2$ by means of $\psi$. In particular, if $\mathfrak{g}_2$ is a 1-dimensional Lie algebra, $(\mathfrak{g},B_{\mathfrak{g}})$ is called the \emph{1-dimensional double extension} of $(\mathfrak{g}_1,B_1)$ by means of $\psi$.
\end{defn}

\begin{cor}\label{double}
Suppose  $(\mathfrak{g}_{1},B_{1})$ is  a metric Lie superalgebra and $D\in\mathrm{Der}_{\bar{0}}\ (\mathfrak{g}_{1},B_1)$. Let $B_{\C D}$ be an invariant symmetric (not necessary non-degenerate) bilinear form on $\C D=\langle D\rangle$. Then the 1-dimensional double extension $(\mathfrak{g}=\C D \bigoplus\mathfrak{g}_{1}\bigoplus \C D^{\ast},B_{\mathfrak{g}})$ of $(\mathfrak{g}_1,B_1)$ by means of $D$ is given as follows:

(1) The Lie-super brackets on $\mathfrak{g}$ are given by
$$[x,y]_{\mathfrak{g}}=[x,y]_{\mathfrak{g}_{1}}+B_{1}(D(x),y)D^{\ast}, \quad [D,x]_{\mathfrak{g}}=D(x),\ x,y\in \mathfrak{g}_{1}.$$

(2) The invariant scalar product $B_\mathfrak{g}$ is given by
$$B_{\mathfrak{g}}(D, D)=B_{\C D}(D, D),\quad  B_{\mathfrak{g}}(D^{\ast}, D)=1,\quad B_{\mathfrak{g}}(x,y)=B_{1}(x,y),\ x,y\in \mathfrak{g}_{1}.$$
\end{cor}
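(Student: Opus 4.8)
The plan is to obtain this corollary as a direct specialization of Theorem \ref{B-B} to the case where the second factor is the one-dimensional abelian Lie algebra $\C D=\langle D\rangle$, regarded as concentrated in even degree. First I would set $\mathfrak{g}_2=\C D$ and take $\psi\colon\C D\to\mathrm{Der}(\mathfrak{g}_1,B_1)$ to be the linear map sending the generator $D$ to the given even skew-supersymmetric derivation $D$. One must check that $\psi$ is a morphism of Lie superalgebras, but this is automatic: since $\C D$ is abelian we have $[D,D]_{\mathfrak{g}_2}=0$, while $D$ being even forces $[\psi(D),\psi(D)]=D\circ D-(-1)^{|D||D|}D\circ D=0$, so the bracket-preserving condition holds trivially. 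Likewise I would take $B_2=B_{\C D}$, noting that any symmetric bilinear form on the one-dimensional space $\C D$ is automatically invariant because its bracket vanishes.

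Next I would compute the auxiliary data $\varphi$ and $\pi$ appearing in Theorem \ref{B-B}. The dual space $\mathfrak{g}_2^{\ast}=\C D^{\ast}$ is one-dimensional with $D^{\ast}(D)=1$, so any element of it is determined by its value on $D$. Evaluating the defining formula of $\varphi$ and using that $|D|=\bar{0}$ kills all signs, I get
$$\varphi(x,y)(D)=(-1)^{(|x|+|y|)|D|}B_1(\psi(D)(x),y)=B_1(D(x),y),$$
hence $\varphi(x,y)=B_1(D(x),y)\,D^{\ast}$. For the coadjoint representation $\pi$, the key observation is again that $\C D$ is abelian, so $\mathrm{ad}\,D=0$ and therefore $\pi(D)=0$; consequently $[D,D^{\ast}]_{\mathfrak{g}}=\pi(D)(D^{\ast})=0$.

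Finally I would substitute these computations into the bracket and bilinear-form formulas of Theorem \ref{B-B}. The bracket $[x_2,y_2]_{\mathfrak{g}}=[x_2,y_2]_{\mathfrak{g}_2}$ becomes $[D,D]_{\mathfrak{g}}=0$; the bracket $[x_2,y_1]_{\mathfrak{g}}=\psi(x_2)y_1$ becomes $[D,x]_{\mathfrak{g}}=D(x)$; the bracket $[x_2,f]_{\mathfrak{g}}=\pi(x_2)f$ becomes $[D,D^{\ast}]_{\mathfrak{g}}=0$; and $[x_1,y_1]_{\mathfrak{g}}=[x_1,y_1]_{\mathfrak{g}_1}+\varphi(x_1,y_1)$ becomes $[x,y]_{\mathfrak{g}}=[x,y]_{\mathfrak{g}_1}+B_1(D(x),y)D^{\ast}$, the brackets between $\mathfrak{g}_1$ and $D^{\ast}$ together with $[D^{\ast},D^{\ast}]$ being the unlisted (hence zero) brackets. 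This yields exactly part (1). Similarly, $B_{\mathfrak{g}}(x_2,y_2)=B_2(x_2,y_2)$ gives $B_{\mathfrak{g}}(D,D)=B_{\C D}(D,D)$, the pairing $B_{\mathfrak{g}}(f,y_2)=f(y_2)$ gives $B_{\mathfrak{g}}(D^{\ast},D)=D^{\ast}(D)=1$, and $B_{\mathfrak{g}}(x_1,y_1)=B_1(x_1,y_1)$ is unchanged, giving part (2).

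The Lie superalgebra axioms and the invariance of $B_{\mathfrak{g}}$ are inherited for free from Theorem \ref{B-B}, so no separate verification is needed. I do not anticipate any serious obstacle; the only point requiring care is the bookkeeping of the Koszul signs in $\varphi$ and in the coadjoint action, both of which collapse because $D$ is even, so the essential content is simply recognizing that the general double-extension construction degenerates to these clean formulas in the rank-one case.
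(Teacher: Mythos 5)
Your proposal is correct and follows exactly the paper's route: the paper proves this corollary by the single line ``It follows from Theorem \ref{B-B},'' i.e.\ by specializing the general double-extension theorem to $\mathfrak{g}_2=\C D$, which is precisely what you do. Your write-up merely makes explicit the details (the triviality of the morphism condition for $\psi$, the vanishing of the coadjoint action $\pi$, and the sign collapse in $\varphi$ because $D$ is even) that the paper leaves to the reader.
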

\begin{proof}
It follows from the Theorem \ref{B-B}.
\end{proof}
Double extension is an important method to construct examples of metric Lie superalgebras. In Lie case, the following
result allows us to inductively classify  metric Lie algebras by double extension.
\begin{thm}\cite[Theorem 1]{M-R}
A non-simple metric Lie algebra which is indecomposable is a double extension of some  metric Lie algebra by a 1-dimensional Lie algebra or a simple Lie algebra.
\end{thm}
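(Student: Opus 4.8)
The plan is to peel off a minimal degenerate ideal and to recognise its annihilator as the datum of a double extension. Since $B$ is non-degenerate, for every ideal $I$ the orthogonal $I^{\perp}$ is again an ideal (by invariance of $B$) with $\dim I+\dim I^{\perp}=\dim\mathfrak{g}$, and $I\cap I^{\perp}$ is the radical of $B|_{I}$, hence also an ideal. First I would choose a nonzero ideal $\mathfrak{a}$ of minimal dimension. If $\mathfrak{a}$ were non-degenerate we would obtain an orthogonal splitting $\mathfrak{g}=\mathfrak{a}\oplus\mathfrak{a}^{\perp}$ into ideals; indecomposability forces $\mathfrak{a}=\mathfrak{g}$, and then minimality makes $\mathfrak{g}$ simple, contradicting the hypothesis. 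Hence $\mathfrak{a}\cap\mathfrak{a}^{\perp}=\mathfrak{a}$, i.e. $\mathfrak{a}\subseteq\mathfrak{a}^{\perp}$ is totally isotropic. A short computation with invariance, $B([\mathfrak{a}^{\perp},\mathfrak{a}],\mathfrak{g})=B(\mathfrak{a}^{\perp},[\mathfrak{a},\mathfrak{g}])\subseteq B(\mathfrak{a}^{\perp},\mathfrak{a})=0$, shows $[\mathfrak{a}^{\perp},\mathfrak{a}]=0$; in particular $\mathfrak{a}$ is abelian and $\mathfrak{a}^{\perp}$ centralises it.

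Next I would form the two constituents of the extension. On $\mathfrak{g}_{1}:=\mathfrak{a}^{\perp}/\mathfrak{a}$ the form $B$ descends to a non-degenerate invariant form $B_{1}$, because the radical of $B|_{\mathfrak{a}^{\perp}}$ is exactly $\mathfrak{a}^{\perp}\cap(\mathfrak{a}^{\perp})^{\perp}=\mathfrak{a}^{\perp}\cap\mathfrak{a}=\mathfrak{a}$; thus $(\mathfrak{g}_{1},B_{1})$ is a metric Lie algebra. Set $\mathfrak{g}_{2}:=\mathfrak{g}/\mathfrak{a}^{\perp}$, which has dimension $\dim\mathfrak{a}$. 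The pairing $\langle a,\bar{x}\rangle:=B(a,x)$ is well defined and perfect, giving a linear isomorphism $\mathfrak{a}\cong\mathfrak{g}_{2}^{\ast}$, and since $\mathfrak{a}^{\perp}$ centralises $\mathfrak{a}$ the adjoint action of $\mathfrak{g}$ on $\mathfrak{a}$ factors through $\mathfrak{g}_{2}$. Using invariance one checks that, under this isomorphism, the $\mathfrak{g}_{2}$-action on $\mathfrak{a}$ is precisely the coadjoint representation of $\mathfrak{g}_{2}$. Because $\mathfrak{a}$ is a minimal ideal it is an irreducible $\mathfrak{g}$-module, hence an irreducible $\mathfrak{g}_{2}$-module; therefore the coadjoint, and so the adjoint, representation of $\mathfrak{g}_{2}$ is irreducible. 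A Lie algebra with irreducible adjoint representation has no proper nonzero ideals, so $\mathfrak{g}_{2}$ is either one-dimensional or simple, which is exactly the list in the statement.

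Finally I would reconstruct $\mathfrak{g}$ as the double extension of $(\mathfrak{g}_{1},B_{1})$ by $\mathfrak{g}_{2}$ from Theorem \ref{B-B}. Choosing a vector-space section of the projection $\mathfrak{g}\to\mathfrak{g}_{2}$ together with an isotropic lift of $\mathfrak{g}_{1}$ into $\mathfrak{a}^{\perp}$ gives a splitting $\mathfrak{g}=\mathfrak{g}_{2}\oplus\mathfrak{g}_{1}\oplus\mathfrak{a}$; the induced map $\psi\colon\mathfrak{g}_{2}\to\mathfrak{gl}(\mathfrak{g}_{1})$ lands in the skew-symmetric derivations $\mathrm{Der}(\mathfrak{g}_{1},B_{1})$ because $B$ is invariant and $\mathfrak{a}^{\perp}$ is an ideal, and the $\mathfrak{a}$-component of the bracket on $\mathfrak{g}_{1}$ reproduces the cocycle $\varphi$ of the construction. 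I expect the main obstacle to be this last step: one must choose the section of $\mathfrak{g}_{2}$ to be a Lie subalgebra compatible with $B$. For $\dim\mathfrak{g}_{2}=1$ any lift works after rescaling, but in the simple case this requires Levi's theorem to split off a copy of $\mathfrak{g}_{2}$, followed by an isotropic adjustment so that $B$ restricts to $\mathfrak{g}_{2}$ and pairs $\mathfrak{g}_{2}$ with $\mathfrak{a}$ as in Theorem \ref{B-B}. Verifying that with these choices all the bracket and invariance identities match the double-extension formulas is the delicate bookkeeping, but it is forced by the structure already established.
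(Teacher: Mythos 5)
The first thing to note is that the paper contains no proof of this statement: it is quoted directly as Theorem 1 of Medina--Revoy \cite{M-R} and used as a black box, so there is nothing in the paper itself to compare your argument against. Judged on its own, your sketch reconstructs the classical Medina--Revoy argument correctly: minimal ideal $\mathfrak{a}$, the dichotomy between $\mathfrak{a}$ non-degenerate and $\mathfrak{a}$ totally isotropic, $[\mathfrak{a}^{\perp},\mathfrak{a}]=0$ by invariance, the metric quotient $\mathfrak{g}_{1}=\mathfrak{a}^{\perp}/\mathfrak{a}$, the identification of $\mathfrak{a}$ with $\mathfrak{g}_{2}^{\ast}$ as a coadjoint module, irreducibility forcing $\mathfrak{g}_{2}$ to be one-dimensional or simple, and reconstruction via a Lie-algebra section (trivial in the one-dimensional case, Levi--Malcev in the simple case, since the image of the radical in the simple quotient $\mathfrak{g}/\mathfrak{a}^{\perp}$ is zero). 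This is exactly the route of the cited reference.

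Two remarks. First, the one genuine loose end is in your non-degenerate branch: ``minimality makes $\mathfrak{g}$ simple'' is not quite right, because a Lie algebra with no proper nonzero ideals is either simple or one-dimensional abelian, and the one-dimensional metric Lie algebra is indecomposable, non-simple, and is not a double extension of anything (double extensions have dimension at least $2$). The theorem is only correct with a hypothesis excluding dimension one (which is how \cite{M-R} states it), so you should either add that hypothesis or treat this case explicitly rather than fold it into ``simple.'' Second, the ``delicate bookkeeping'' you defer at the end is more forced than you fear, and needs no ``isotropic adjustment'': given the Levi section $\mathfrak{s}\cong\mathfrak{g}_{2}$, set $W=(\mathfrak{s}\oplus\mathfrak{a})^{\perp}$; the pairing between $\mathfrak{s}$ and $\mathfrak{a}$ is perfect, so $B|_{\mathfrak{s}\oplus\mathfrak{a}}$ is non-degenerate and $\mathfrak{g}=\mathfrak{s}\oplus W\oplus\mathfrak{a}$ with $W\subseteq\mathfrak{a}^{\perp}$ a lift of $\mathfrak{g}_{1}$; invariance gives $B([\mathfrak{s},W],\mathfrak{s})=-B(W,[\mathfrak{s},\mathfrak{s}])=0$, which forces $[\mathfrak{s},W]\subseteq W$, and from there the double-extension formulas of Theorem \ref{B-B} fall out, with $B_{2}:=B|_{\mathfrak{s}\times\mathfrak{s}}$ allowed to be arbitrary invariant symmetric, exactly as that theorem permits.
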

The situation for metric Lie superalgebras is more complicated. It is still an open question whether every indecomposable non-simple metric Lie superalgebra $(\mathfrak{g},B)$ can be obtained by double extension. However, in the cases $\mathrm{dim}\ \mathfrak{g}_{\bar{0}}=2$ and $\mathrm{dim}\ \mathfrak{g}_{\bar{1}}=2$,
the answer is positive.

\begin{thm}\cite[Theorem 3]{B-B}\label{d1}
Any indecomposable non-simple metric Lie superalgebra $(\mathfrak{g}=\mathfrak{g}_{\bar{0}}\bigoplus \mathfrak{g}_{\bar{1}},B)$ with $\mathrm{dim}\ \mathfrak{g}_{\bar{1}}=2$, is a double extensions by a 1-dimensional or semi-simple Lie algebra.
\end{thm}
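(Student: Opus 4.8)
The plan is to reproduce, in the super setting, the Medina--Revoy inductive scheme: locate a minimal graded ideal, show it is totally isotropic, and then exhibit $\mathfrak{g}$ as a double extension in the sense of Theorem \ref{B-B}, with the extending algebra $\mathfrak{g}_{2}$ forced to be $1$-dimensional or semi-simple by the constraint $\dim\mathfrak{g}_{\bar 1}=2$. First I would invoke non-simplicity: since $\mathfrak{g}$ is non-simple it admits a proper nonzero graded ideal, so by finite-dimensionality it has a minimal nonzero graded ideal $I$, which is then proper ($I\neq\mathfrak{g}$). Because $B$ is non-degenerate, either $B|_{I}$ is non-degenerate or $I\cap I^{\perp}\neq 0$. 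In the first case $\mathfrak{g}=I\oplus I^{\perp}$ is an orthogonal decomposition into graded ideals, and since $I\neq\mathfrak{g}$ we have $I^{\perp}\neq 0$, contradicting indecomposability. Hence $I\cap I^{\perp}$ is a nonzero graded ideal contained in $I$, so minimality gives $I\subseteq I^{\perp}$; that is, $I$ is totally isotropic.

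Next I would establish that $I^{\perp}$ centralizes $I$. For homogeneous $x\in I^{\perp}$, $y\in I$ and arbitrary $w\in\mathfrak{g}$, invariance of $B$ together with $[y,w]\in I$ and $x\in I^{\perp}$ yields
\begin{equation*}
B([x,y],w)=B(x,[y,w])=0 .
\end{equation*}
Non-degeneracy of $B$ then forces $[x,y]=0$, so $[I^{\perp},I]=0$; in particular $I$ is abelian and the adjoint action of $\mathfrak{g}$ on $I$ factors through $\mathfrak{g}_{2}:=\mathfrak{g}/I^{\perp}$. Since $B$ is even and invariant, $I^{\perp}$ is a graded ideal, the form descends to a non-degenerate, supersymmetric, invariant even form $\bar B$ on $\mathfrak{g}_{1}:=I^{\perp}/I$, making $(\mathfrak{g}_{1},\bar B)$ a metric Lie superalgebra, and $B$ identifies $\mathfrak{g}_{2}$ with $I^{*}$. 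Choosing a graded isotropic section of $\mathfrak{g}\to\mathfrak{g}_{2}$ and reading off the brackets, the adjoint action of $\mathfrak{g}_{2}$ on $\mathfrak{g}_{1}$ is by skew-supersymmetric derivations and $\mathfrak{g}$ is exactly the double extension of $(\mathfrak{g}_{1},\bar B)$ by $\mathfrak{g}_{2}$ described in Theorem \ref{B-B}.

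It remains to show $\mathfrak{g}_{2}$ is a $1$-dimensional or semi-simple \emph{Lie algebra}, and this is where $\dim\mathfrak{g}_{\bar 1}=2$ enters. Minimality of $I$ means it is an irreducible graded $\mathfrak{g}_{2}$-module, and via $B$ it is identified with the coadjoint module $\mathfrak{g}_{2}^{*}$; dualizing, submodules of $\mathfrak{g}_{2}^{*}$ correspond by annihilators to graded ideals of $\mathfrak{g}_{2}$, so $\mathfrak{g}_{2}$ has no proper nonzero graded ideal, i.e. $\mathfrak{g}_{2}$ is simple or abelian of dimension $1$. For the evenness, note $B|_{\mathfrak{g}_{\bar 1}}$ is a non-degenerate skew form on the $2$-dimensional space $\mathfrak{g}_{\bar 1}$, so the isotropic subspace $I_{\bar 1}$ has dimension $\le 1$, whence $\dim(\mathfrak{g}_{2})_{\bar 1}=\dim I_{\bar 1}\le 1$. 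A simple Lie superalgebra with odd part of dimension $\le 1$ is necessarily a simple Lie algebra, so the simple case delivers a semi-simple Lie algebra. In the remaining case $\mathfrak{g}_{2}$ is $1$-dimensional; if it were odd, then $\mathfrak{g}_{2}$ would act on the (then purely even) $\mathfrak{g}_{1}$ by an odd skew-supersymmetric derivation, which must vanish, forcing the $2$-dimensional odd space $\mathfrak{g}_{2}\oplus\mathfrak{g}_{2}^{*}$ to split off as a non-degenerate ideal and contradicting indecomposability, apart from the degenerate purely-odd abelian algebra which is excluded. Thus $\mathfrak{g}_{2}$ is even, a $1$-dimensional or semi-simple Lie algebra.

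The main obstacle I anticipate is precisely this last step: pinning down that $\mathfrak{g}_{2}$ is \emph{even} rather than a genuine Lie superalgebra. The structural reduction to a double extension and the identity $[I^{\perp},I]=0$ are robust and grading-insensitive, but showing the extending algebra carries no odd part uses the specific input $\dim\mathfrak{g}_{\bar 1}=2$, the non-existence of simple Lie superalgebras with one-dimensional odd part, and a careful examination of the odd $1$-dimensional case against indecomposability. Throughout, the secondary difficulty is the bookkeeping of the $\mathbb{Z}_{2}$-grading and the sign conventions in the invariance and skew-supersymmetry identities, so that the descended data on $\mathfrak{g}_{1}$ and $\mathfrak{g}_{2}$ match the hypotheses of Theorem \ref{B-B} and Proposition \ref{dim}.
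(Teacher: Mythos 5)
The paper itself gives no proof of Theorem \ref{d1} --- it is quoted verbatim from \cite[Theorem 3]{B-B} --- so your proposal has to stand on its own merits, and it does not. Your first two paragraphs are correct and standard (a minimal graded ideal $I$ is totally isotropic, $[I^{\perp},I]=0$, the form descends to $\mathfrak{g}_{1}=I^{\perp}/I$, and $\mathfrak{g}/I^{\perp}\cong I^{*}$), as is the deduction that $\mathfrak{g}_{2}=\mathfrak{g}/I^{\perp}$ is simple or $1$-dimensional. The genuine gap is the sentence asserting that ``choosing a graded isotropic section of $\mathfrak{g}\to\mathfrak{g}_{2}$'' exhibits $\mathfrak{g}$ as the double extension of Theorem \ref{B-B}. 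In that double extension $\mathfrak{g}_{2}$ sits inside $\mathfrak{g}$ as a \emph{subalgebra}: $[x_2,y_2]_{\mathfrak{g}}=[x_2,y_2]_{\mathfrak{g}_2}$, with no component in $\mathfrak{g}_{1}\oplus\mathfrak{g}_{2}^{*}$. A linear isotropic section need not have this property, and in the super setting a subalgebra section can fail to exist at all, because an odd lift $x$ may have $[x,x]\neq 0$. This is not hypothetical: take the paper's own algebra $\mathfrak{g}_{2|2}^{1}$ ($[e_4,e_4]=e_1$, $[e_2,e_4]=e_3$) and the odd minimal ideal $I=\C e_3$. Then $I^{\perp}=\mathrm{span}\{e_1,e_2,e_3\}$, so $\mathfrak{g}_{2}$ is $1$-dimensional \emph{odd}, $\mathfrak{g}_{1}$ is purely even, and every odd lift $x=e_4+\alpha e_3$ of the generator satisfies $[x,x]=e_1\neq 0$; hence no subalgebra section exists and $\mathfrak{g}_{2|2}^{1}$ is not a Theorem \ref{B-B} double extension along this ideal.

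The same example refutes your third paragraph, where the evenness of $\mathfrak{g}_{2}$ is supposedly forced: you argue that a $1$-dimensional odd $\mathfrak{g}_{2}$ makes $\mathfrak{g}_{2}\oplus\mathfrak{g}_{2}^{*}$ split off as a non-degenerate ideal, contradicting indecomposability; but $\mathfrak{g}_{2|2}^{1}$ is indecomposable and realizes exactly this configuration (the would-be ideal $\mathrm{span}\{e_3,e_4\}$ is not an ideal, precisely because $[e_4,e_4]=e_1$). The splitting you invoke is a consequence of the double-extension structure you never justified, so the evenness argument collapses with it. A correct proof must do two things that are absent from the proposal: (i) show the minimal ideal can be \emph{chosen} so that $\mathfrak{g}/I^{\perp}$ is even (for $\mathfrak{g}_{2|2}^{1}$ one must take $I=\C e_1$, not $\C e_3$; the theorem's conclusion is existential, so an arbitrary minimal ideal will not do), and (ii) construct a subalgebra section of $\mathfrak{g}\to\mathfrak{g}_{2}$ --- trivial in the $1$-dimensional even case since $[x,x]=0$ for even $x$, but requiring a Levi--Malcev-type lifting argument when $\mathfrak{g}_{2}$ is simple. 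You describe precisely this part as ``robust and grading-insensitive,'' which is backwards: it is where the content of the theorem lies. A last, smaller point: $\C_{0|2}$ does satisfy the hypotheses as stated (indecomposable, non-simple, $\dim\mathfrak{g}_{\bar 1}=2$) yet is not a double extension by any Lie algebra, so your phrase ``which is excluded'' needs a justification that neither the statement nor your proof supplies.
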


\begin{prop}\cite[Propsition 4.8]{2}\label{d2}
Let $\mathfrak{g}$ be a non-Abelian metric Lie superalgebra with 2-dimensional even part. Then $\mathfrak{g}$ is a double extension of the symplectic space
$\mathfrak{g}_{\bar{1}}$ (regarded as an Abelian Lie
superalgebra) by a 1-dimensional Lie algebra.
\end{prop}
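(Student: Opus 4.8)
The plan is to locate inside the plane $\mathfrak{g}_{\bar{0}}$ a distinguished line that is simultaneously isotropic, central, and equal to $[\mathfrak{g}_{\bar{1}},\mathfrak{g}_{\bar{1}}]$, and then to read off the double‑extension data directly from Corollary \ref{double}. First I would record the preliminaries. By Proposition \ref{dim} the even part $\mathfrak{g}_{\bar{0}}$ is a $2$-dimensional metric Lie algebra, hence abelian, since the non-abelian $2$-dimensional Lie algebra admits no non-degenerate invariant symmetric form. Writing $B_0=B|_{\mathfrak{g}_{\bar{0}}\times\mathfrak{g}_{\bar{0}}}$ and $B_1=B|_{\mathfrak{g}_{\bar{1}}\times\mathfrak{g}_{\bar{1}}}$, I would encode the remaining brackets by the map $\rho(z)=\mathrm{ad}_{z}|_{\mathfrak{g}_{\bar{1}}}$ for $z\in\mathfrak{g}_{\bar{0}}$, which is $B_1$-skew by Proposition \ref{dim}(1), and by the symmetric map $\mu(x,y)=[x,y]$ from $\mathfrak{g}_{\bar{1}}\times\mathfrak{g}_{\bar{1}}$ to $\mathfrak{g}_{\bar{0}}$. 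Proposition \ref{dim}(2) together with skewness yields the adjunction
\begin{equation*}
B_0(\mu(x,y),z)=B_1(\rho(z)x,y),\qquad x,y\in\mathfrak{g}_{\bar{1}},\ z\in\mathfrak{g}_{\bar{0}},
\end{equation*}
whence, setting $W:=[\mathfrak{g}_{\bar{1}},\mathfrak{g}_{\bar{1}}]=\mathrm{im}\,\mu$ and $\mathfrak{z}:=\ker\rho$, nondegeneracy of $B_1$ gives $\mathfrak{z}=W^{\perp}$ (orthogonal in $\mathfrak{g}_{\bar{0}}$ with respect to $B_0$).

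The decisive step, which I expect to be the main obstacle, is to prove that $W$ is a one-dimensional \emph{isotropic} line. The only nontrivial Jacobi identity is the one on three odd elements, $\rho(\mu(y,z))x+\rho(\mu(z,x))y+\rho(\mu(x,y))z=0$. Pairing it with $B_1(\,\cdot\,,w)$ and applying the adjunction turns it into the purely metric identity
\begin{equation*}
B_0(\mu(x,w),\mu(y,z))+B_0(\mu(y,w),\mu(z,x))+B_0(\mu(z,w),\mu(x,y))=0 .
\end{equation*}
Specializing $w=x=y=z$ gives $B_0(\mu(x,x),\mu(x,x))=0$, so $\mu(x,x)$ is $B_0$-isotropic for every $x$. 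Over $\mathbb{C}$ the form $B_0$ on the plane has exactly two isotropic lines $\ell_\pm$; writing $\mu(x,x)=\mu_+(x,x)\,p_++\mu_-(x,x)\,p_-$ in spanning vectors $p_\pm$ of $\ell_\pm$ (so $B_0(p_+,p_-)\neq0$), isotropy forces $\mu_+(x,x)\,\mu_-(x,x)=0$ for all $x$. Since the ring of polynomial functions on $\mathfrak{g}_{\bar{1}}$ is an integral domain, one of the quadratic forms $\mu_\pm$ vanishes identically, so $\mu(x,x)$, and by polarization all of $W$, lies in a single isotropic line; thus $\dim W\le1$. Moreover $W\neq0$, for $W=0$ would give $\mu=0$, hence $\mathfrak{z}=W^{\perp}=\mathfrak{g}_{\bar{0}}$ and $\rho=0$, making $\mathfrak{g}$ abelian against the hypothesis. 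Therefore $W=\mathbb{C}z_0$ is isotropic, and since an isotropic line in a nondegenerate plane equals its own orthogonal, $\mathfrak{z}=W^{\perp}=W$; that is, $z_0$ is central and $[\mathfrak{g}_{\bar{1}},\mathfrak{g}_{\bar{1}}]=\mathbb{C}z_0$.

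Finally I would assemble the double extension. Extend $z_0$ to a hyperbolic basis $\{z_0,d\}$ of $\mathfrak{g}_{\bar{0}}$ with $B_0(z_0,z_0)=B_0(d,d)=0$ and $B_0(z_0,d)=1$, adjusting $d$ by a multiple of $z_0$. Set $D:=\rho(d)=\mathrm{ad}_{d}|_{\mathfrak{g}_{\bar{1}}}$; by Proposition \ref{dim}(1) it is an even, $B_1$-skew map, hence $D\in\mathrm{Der}_{\bar{0}}(\mathfrak{g}_{\bar{1}},B_1)$ for the abelian (purely odd) superalgebra structure on $\mathfrak{g}_{\bar{1}}$. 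Because $\rho(z_0)=0$, the adjunction collapses to $\mu(x,y)=B_1(D(x),y)\,z_0$ for all $x,y\in\mathfrak{g}_{\bar{1}}$. Comparing with Corollary \ref{double}, the linear map fixing $\mathfrak{g}_{\bar{1}}$ and sending $D\mapsto d$, $D^{\ast}\mapsto z_0$ identifies $\mathfrak{g}$ with the $1$-dimensional double extension of the symplectic space $(\mathfrak{g}_{\bar{1}},B_1)$ by $D$ (taking $B_{\mathbb{C}D}(D,D)=0$): the relations $[d,x]=D(x)$, $[x,y]=B_1(D(x),y)\,z_0$, and the centrality of $z_0$ reproduce the double-extension brackets, while $B_0(z_0,d)=1$, $B_0(z_0,z_0)=B_0(d,d)=0$, and $B|_{\mathfrak{g}_{\bar{1}}}=B_1$ reproduce its invariant scalar product. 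This exhibits the required isometric isomorphism and completes the proof.
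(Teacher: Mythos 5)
Your proof is correct, but note that this paper never proves Proposition \ref{d2} at all: it is imported by citation from \cite[Proposition 4.8]{2}, where it arises inside a larger structure theory of singular quadratic Lie superalgebras. So there is no internal argument to compare against; what you have produced is a self-contained proof using only the paper's own toolkit (Proposition \ref{dim} and Corollary \ref{double}), which is a genuine addition rather than a retracing. The key steps all check out: the adjunction $B_0(\mu(x,y),z)=B_1(\rho(z)x,y)$ follows from Proposition \ref{dim} exactly as you say; the odd--odd--odd Jacobi identity paired against $B_1$ does yield the cyclic metric identity, and $w=x=y=z$ gives isotropy of $\mu(x,x)$; the integral-domain argument correctly forces $W=[\mathfrak{g}_{\bar{1}},\mathfrak{g}_{\bar{1}}]$ into a single isotropic line (this is the step that genuinely needs algebraic closedness, consistent with the paper working over $\C$); $W\neq 0$ rules out the abelian case; $W^{\perp}=W=\ker\rho$ gives centrality; and the hyperbolic basis $\{z_0,d\}$ with $D=\rho(d)$, $B_{\C D}(D,D)=0$ matches the brackets and the scalar product of Corollary \ref{double} term by term, so the identification is an i-isomorphism. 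Two cosmetic remarks: your step one (that a $2$-dimensional metric Lie algebra is abelian) silently reproves Lemma \ref{c}(1), which you could simply cite; and the aside that the odd--odd--odd identity is ``the only nontrivial Jacobi identity'' is never actually used (nor needed --- the even--odd--odd identity follows automatically from $B_1$-skewness of $D$), so it could be dropped without loss.
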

\begin{exa}
We construct  in detail the 1-dimensional double extensions of $\mathfrak{\C}_{0|2}$.
 Let $\mathfrak{\C}_{0|2}=\mathrm{span}\{h_1,h_2\}$ be the symplectic space with the symplectic form given by
$$ B_{\mathfrak{\C}_{0|2}}=\left(
   \begin{array}{cc}
     0 & 1 \\
     -1 & 0 \\
   \end{array}
 \right).$$
 Denote by $\mathfrak{sp}(\mathfrak{\C}_{0|2},B_{\mathfrak{\C}_{0|2}})$ the 3-dimensional symplectic Lie algebra, which consists of all linear transformations of $\mathfrak{\C}_{0|2}$ which are skew-symmetric with respect to
 $B_{\mathfrak{\C}_{0|2}}$ \cite{H}. Then we have
 $\mathrm{Der}_{\bar{0}}\ (\mathfrak{\C}_{0|2},B_{\mathfrak{\C}_{0|2}})=\mathfrak{sp}(\mathfrak{\C}_{0|2},B_{\mathfrak{\C}_{0|2}})$,
 spanned by
$$D_1=\left(
  \begin{array}{cc}
    1 & 0 \\
    0 & -1 \\
  \end{array}
\right),\quad
 D_2=\left(
  \begin{array}{cc}
    0 & 1 \\
    0 & 0 \\
  \end{array}
\right),\quad
D_3=\left(
  \begin{array}{cc}
    0 & 0 \\
    1 & 0 \\
  \end{array}
\right).$$
 Let $D=k_1D_1+k_2D_2+k_3D_3$, $k_{1}, k_{2},k_3\in \C$ and $\mathfrak{g}=\C D \bigoplus\mathfrak{\C}_{0|2}\bigoplus \C D^{\ast}$ be a double extension of $(\mathfrak{\C}_{0|2},B_{\mathfrak{\C}_{0|2}})$ by $\C D$.
 From Corollary \ref{double}, the brackets of $\mathfrak{g}$ are given by
 \begin{eqnarray*}
 &&[h_1,h_1]=-k_3 D^{\ast},\quad [h_1,h_2]=k_1 D^{\ast},\quad [h_2,h_2]=k_2 D^{\ast} \\
&& [D,h_1]=k_1 h_1+k_3 h_2,\quad [D,h_2]=-k_1 h_2+k_2 h_1.
 \end{eqnarray*}
Then we determine the brackets of $\mathfrak{g}$ in the following cases:

\emph{Case 1}: $k_1=k_2=k_3=0$. Then $\mathfrak{g}$ is the Abelian Lie superalgebra $\C_{2|2}$.

\emph{Case 2}: $k_1=0$, $k_2,k_3\neq 0$. We choose a basis of $\mathfrak{g}$ as follows:
$$e_1=-2k_3 D^{\ast},\quad e_2=\frac{1}{\sqrt[]{k_2k_3}}D,\quad e_3=h_1+\sqrt[]{\frac{k_3}{k_2}}h_2,\quad e_4=h_1-\sqrt[]{\frac{k_3}{k_2}}h_2.$$
Then the brackets are given by
$$[e_3,e_4]=e_1,\quad [e_2,e_3]=e_3,\quad [e_2,e_4]=-e_4.$$

\emph{Case 3}: $k_1=k_2=0$, $k_3\neq 0$. We choose a basis of $\mathfrak{g}$ as follows:
$$e_1=- D^{\ast},\quad e_2=D,\quad e_3=\sqrt[]{k_3}h_2,\quad e_4=\frac{1}{\sqrt[]{k_3}}h_1.$$
Then the brackets are given by
$$[e_4,e_4]=e_1,\quad [e_2,e_4]=e_3.$$

\emph{Case 4}: $k_1=k_3=0$, $k_2\neq 0$. We choose a basis of $\mathfrak{g}$ as follows:
$$e_1=k_2 D^{\ast},\quad e_2=D,\quad e_3=k_2h_1,\quad e_4=h_2.$$
Then the brackets are given by
$$[e_4,e_4]=e_1,\quad [e_2,e_4]=e_3.$$

\emph{Case 5}: $k_1\neq0$, $k_2=k_3=0$. We choose a basis of $\mathfrak{g}$ as follows:
$$e_1=k_1 D^{\ast},\quad e_2=\frac{1}{k_1}D,\quad e_3=h_1,\quad e_4=h_2.$$
Then the brackets are given by
$$[e_3,e_4]=e_1,\quad [e_2,e_3]=e_3,\quad [e_2,e_4]=-e_4.$$

\emph{Case 6}: $k_1\neq0$, $k_2=0$, $k_3\neq0$. We choose a basis of $\mathfrak{g}$ as follows:
$$e_1=\frac{2k^{2}_1}{k_3} D^{\ast},\quad e_2=\frac{1}{k_1}D,\quad e_3=\frac{2k_1}{k_3}h_1+h_2,\quad e_4=h_2.$$
Then the brackets are given by
$$[e_3,e_4]=e_1,\quad [e_2,e_3]=e_3,\quad [e_2,e_4]=-e_4.$$

\emph{Case 7}: $k_1, k_2\neq0$, $k_3=0$. We choose a basis of $\mathfrak{g}$ as follows:
$$e_1=-\frac{2k^{2}_1}{k_2} D^{\ast},\quad e_2=\frac{1}{k_1}D,\quad e_3=h_1,\quad e_4=h_1-\frac{2k_1}{k_2}h_2.$$
Then the brackets are given by
$$[e_3,e_4]=e_1,\quad [e_2,e_3]=e_3,\quad [e_2,e_4]=-e_4.$$

\emph{Case 8}: $k_1, k_2, k_3\neq0$, $k^{2}_1+k_2k_3=0$. We choose a basis of $\mathfrak{g}$ as follows:
$$e_1=k_2 D^{\ast},\quad e_2=\frac{1}{k_2}D,\quad e_3=h_1-\frac{k_1}{k_2}h_2,\quad e_4=h_2.$$
Then the brackets are given by
$$[e_4,e_4]=e_1,\quad [e_2,e_4]=e_3.$$
\end{exa}

\emph{Case 9}: $k_1, k_2, k_3\neq0$, $k^{2}_1+k_2k_3\neq0$. We choose a basis of $\mathfrak{g}$ as follows:
\begin{eqnarray*}
 &&e_1=-2\frac{k^{2}_1+k_2k_3}{k_2} D^{\ast},\quad e_2=\frac{1}{\sqrt[]{k^{2}_1+k_2k_3}}D, \\
  && e_3=h_1-\frac{k_1-\sqrt[]{k^{2}_1+k_2k_3}}{k_2}h_2,\quad e_4=h_1-\frac{k_1+\sqrt[]{k^{2}_1+k_2k_3}}{k_2}h_2.
\end{eqnarray*}
Then the brackets are given by
$$[e_3,e_4]=e_1,\quad [e_2,e_3]=e_3,\quad [e_2,e_4]=-e_4.$$
We summarize the results in the following Proposition.
\begin{prop}\cite[Proposition 3.3]{2}\label{2,2}
Suppose that $\mathfrak{g}$ is a $2|2$-dimensional non-Abelian metric Lie superalgebra. Then $\mathfrak{g}$ is isomorphic to the following ones:

$(1)\ \mathfrak{g}_{2|2}^{1}:\quad [e_4,e_4]=e_1,\ [e_2,e_4]=e_3$.

$(2)\ \mathfrak{g}_{2|2}^{2}:\quad [e_3,e_4]=e_1,\
[e_2,e_3]=e_3,\
[e_2,e_4]=-e_4.$
\end{prop}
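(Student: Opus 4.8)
The plan is to derive the two-class statement from the structure theorem for metric Lie superalgebras with $2$-dimensional even part together with the explicit double-extension computation carried out just before the statement. First I would invoke Proposition \ref{d2}: since $\mathfrak{g}$ is non-Abelian with $\dim\mathfrak{g}_{\bar{0}}=2$, it is a $1$-dimensional double extension of the symplectic space $\C_{0|2}$ (viewed as an Abelian, purely odd Lie superalgebra) by a line $\C D$. Because $\C_{0|2}$ is Abelian, the derivation condition is automatic, so $D$ is pinned down solely by being even and skew-supersymmetric with respect to $B_{\C_{0|2}}$; that is, $D\in\mathrm{Der}_{\bar{0}}(\C_{0|2},B_{\C_{0|2}})=\mathfrak{sp}(\C_{0|2},B_{\C_{0|2}})=\spn\{D_1,D_2,D_3\}$, exactly as recorded in the preceding Example. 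Writing $D=k_1D_1+k_2D_2+k_3D_3$ and feeding it into the bracket formulas of Corollary \ref{double} reproduces the structure constants displayed there, and $\mathfrak{g}$ is non-Abelian precisely when $D\neq 0$, i.e. $(k_1,k_2,k_3)\neq(0,0,0)$.

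Next I would organize the remaining work around the single scalar invariant $\Delta:=k_1^2+k_2k_3$. Under the identification $\mathfrak{sp}(\C_{0|2},B_{\C_{0|2}})\cong\mathfrak{sl}(2,\C)$ one has $D=\left(\begin{smallmatrix}k_1 & k_2\\ k_3 & -k_1\end{smallmatrix}\right)$ with characteristic polynomial $\lambda^2-\Delta$, so $D$ is nilpotent exactly when $\Delta=0$ and semisimple (with distinct eigenvalues $\pm\sqrt{\Delta}$) exactly when $\Delta\neq 0$. Moreover, conjugating $D$ by an isometry of $(\C_{0|2},B_{\C_{0|2}})$, i.e. an element of $\mathrm{Sp}(2)=\mathrm{SL}(2,\C)$, and rescaling $D$ both carry the double extension to an isomorphic one; hence the isomorphism type of $\mathfrak{g}$ depends only on the $\mathrm{SL}(2,\C)$-orbit of $D$ up to scalars. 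There are exactly two nonzero such orbits, the nilpotent cone $\{\Delta=0\}\setminus\{0\}$ and its complement $\{\Delta\neq 0\}$, and the task reduces to exhibiting, for each, an adapted basis realizing one of the two normal forms. This is precisely what the nine cases of the Example accomplish: Cases $3$, $4$, $8$ have $\Delta=0$ and their displayed basis changes yield $\mathfrak{g}_{2|2}^1$, while Cases $2$, $5$, $6$, $7$, $9$ have $\Delta\neq 0$ and yield $\mathfrak{g}_{2|2}^2$ (Case $1$, with $D=0$, is the excluded Abelian algebra $\C_{2|2}$).

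I would verify that the nine sign patterns of $(k_1,k_2,k_3)$ exhaust all nonzero choices and are consistent with the value of $\Delta$, and that each listed change of basis is invertible and reproduces the asserted brackets. The main difficulty here is bookkeeping rather than conceptual: one must check every basis transformation carefully against the formulas of Corollary \ref{double}, since the normalizing scalars (e.g. $\sqrt{k_2k_3}$, $\sqrt{k_1^2+k_2k_3}$) depend delicately on the case, and confirm that the resulting three or two brackets match $\mathfrak{g}_{2|2}^1$ or $\mathfrak{g}_{2|2}^2$ verbatim.

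Finally, to see that the statement genuinely lists two distinct algebras rather than one presented twice, I would record a quick isomorphism invariant distinguishing them: $\dim[\mathfrak{g},\mathfrak{g}]=2$ for $\mathfrak{g}_{2|2}^1$ (derived algebra $\spn\{e_1,e_3\}$) but $\dim[\mathfrak{g},\mathfrak{g}]=3$ for $\mathfrak{g}_{2|2}^2$ (derived algebra $\spn\{e_1,e_3,e_4\}$); equivalently, $\mathrm{ad}(e_2)|_{\mathfrak{g}_{\bar{1}}}$ is nilpotent in the first case and semisimple with nonzero eigenvalues in the second, mirroring the nilpotent-versus-semisimple dichotomy for $D$. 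This packages the explicit case analysis into the clean two-class classification asserted in the Proposition.
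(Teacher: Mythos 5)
Your proposal is correct and follows essentially the same route as the paper: the paper's proof is exactly the citation of Proposition \ref{d2} (or Theorem \ref{d1}) combined with the nine-case double-extension computation in the preceding Example, and your case assignments (Cases 3, 4, 8 with $k_1^2+k_2k_3=0$ giving $\mathfrak{g}_{2|2}^{1}$; Cases 2, 5, 6, 7, 9 giving $\mathfrak{g}_{2|2}^{2}$) match the paper exactly. Your additions --- organizing the cases by the invariant $\Delta=k_1^2+k_2k_3$ (nilpotent versus semisimple $D$) and verifying non-isomorphism of the two normal forms via $\dim[\mathfrak{g},\mathfrak{g}]$ --- are sound refinements of details the paper leaves implicit.
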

\begin{proof}
It follows from Theorem \ref{d1} or Proposition \ref{d2}.
\end{proof}

\section{Cohomology and deformations}
In this section, we recall the definition of cohomology for Lie superalgebras. For more details, the reader is referred to \cite{BB}.
Suppose  $\mathfrak{g}$ is a Lie superalgebra and $M$ is  a $\mathfrak{g}$-module, the $n$-cochain space $\mathrm{C}^{n}(\mathfrak{g},M)$ is defined by
$$\mathrm{C}^{n}(\mathfrak{g},M)=\mathrm{Hom}(\wedge^{n}\mathfrak{g},M).$$
The coboundary operator $\mathrm{d}:\ \mathrm{C}^{n}(\mathfrak{g},M)\longrightarrow \mathrm{C}^{n+1}(\mathfrak{g},M)$ is defined by
\begin{eqnarray*}
\mathrm{d}(f)(x_{0},\cdots, x_{n})=&&\sum\limits_{i=0}^{n}(-1)^{b_{i}+|x_{i}||f|}x_i\cdot f(x_{0},\cdots,\widehat{x}_{i},\cdots,x_n) \\
&&+\sum\limits_{0\leq p<q\leq n}(-1)^{c_{p,q}}f([x_p,x_q],x_{0},\cdots,\widehat{x}_p,\cdots,\widehat{x}_{q},\cdots,x_n),
\end{eqnarray*}
where
\begin{eqnarray*}
&&b_{i}=i+|x_{i}|(|x_{0}|+\cdots+|x_{i-1}|), \\
&&c_{p,q}=p+q+(|x_{p}|+|x_{q}|)(|x_{0}|+\cdots+|x_{p-1}|)+|x_{q}|(|x_{p+1}|+\cdots+|x_{q-1}|),
\end{eqnarray*}
and\quad $\widehat{}$\quad denotes an omitted term. A standard fact is that $\mathrm{d}^{2}=0$. We can define the
\emph{$n$-cohomology} by
$$\mathrm{H}^{n}(\mathfrak{g},M)=\mathrm{Z}^{n}(\mathfrak{g},M)/\mathrm{B}^{n}(\mathfrak{g},M),$$
where
\begin{eqnarray*}
&& \mathrm{Z}^{n}(\mathfrak{g},M)=\mathrm{Ker}\ (\mathrm{d}: \mathrm{C}^{n}(\mathfrak{g},M)\longrightarrow \mathrm{C}^{n+1}(\mathfrak{g},M)), \\
 &&\mathrm{B}^{n}(\mathfrak{g},M)=\mathrm{Im}\ (\mathrm{d}: \mathrm{C}^{n-1}(\mathfrak{g},M)\longrightarrow \mathrm{C}^{n}(\mathfrak{g},M)).
\end{eqnarray*}
A cochain is called a \emph{cocycle} (resp. \emph{coboundary}) if it is in $\mathrm{Z}^{n}(\mathfrak{g},M)$
 $(\mathrm{resp}.\ \mathrm{B}^{n}(\mathfrak{g},M))$.
The cohomology theory has many applications in mathematics and physics \cite{,Fuks,Musson,Divided power,BB}. One of the most important applications of cohomology is computing formal deformations, for which we need the even part of the second cohomology with adjoint coefficients $\mathrm{H}^{2}(\mathfrak{g},\mathfrak{g})$.
\begin{defn}
A \emph{formal 1-parameter deformation} of a Lie superalgebra $(\mathfrak{g}, [\ ,\ ])$ (for associative algebras, see \cite{G}) is a family of Lie superalgebra structures on the $\mathbb{C}[\![t]\!]$-module $\mathfrak{g}[\![t]\!]=\mathfrak{g}\otimes_{\C}\mathbb{C}[\![t]\!]$ such that
$$[\ ,\ ]_{t}=[\ ,\ ]+\sum_{i=1}^{\infty} t^{i} \phi_{i}(\ ,\ )$$
where each $\phi_{i}$ is in $\mathrm{C}^{2}(\mathfrak{g}, \mathfrak{g})_{\bar{0}}$.
\end{defn}
\begin{remark}
By definition, a formal 1-parameter deformation of $\mathfrak{g}$ is a family of Lie superalgebra structures on $\mathfrak{g}$, parameterized by $\mathbb{C}[\![t]\!]$. For general theory, readers are referred to \cite{Fuks,BB,F,F1}.
\end{remark}
\begin{defn}
If the super Jacobi identity for $[\ ,\ ]_{t}$ holds up to order $n$, then $[\ ,\ ]_{t}$ is called a deformation of order $n$. In particular, if the super Jacobi identity for $[\ ,\ ]_{t}$ is satisfied only up to the $t$-term, then $[\ ,\ ]_{t}$ is called  a \emph{first order deformation} or \emph{infinitesimal deformation}.
\end{defn}
\begin{remark}
By definition, a deformation of order $n$ is a family of  Lie superalgebra structures on $\mathfrak{g}$, parameterized by $\mathbb{C}[\![t]\!]/(t^{n+1})$.
\end{remark}
\begin{defn}
Suppose that $[\ ,\ ]_{t}=\sum t^{i}\phi _{i}$ and $[\ ,\ ]'_{t}=\sum t^{i}\phi'_{i}$ are two formal 1-parameter deformations of $\mathfrak{g}$. $[\ ,\ ]_{t}$ and $[\ ,\ ]'_{t}$ are called \emph{equivalent} if there exists a linear isomorphism $\widehat{\psi}_{t}=\mathrm{id}_{\mathfrak{g}}+\psi_1 t+\psi_2 t^{2}+\cdots$, where $\psi_i$ is in $\mathrm{C}^{1}(\mathfrak{g},\mathfrak{g})_{\bar{0}}$, such that
$$\widehat{\psi}_{t}([x ,y]_{t})=[\widehat{\psi}_{t}(x),\widehat{\psi}_{t}(y)]'_t,\quad \mathrm{for}\ x,y \in \mathfrak{g}.$$
\end{defn}
\begin{defn}
A formal 1-parameter deformation is called \emph{trivial} if it is equivalent to the original bracket.  If every formal 1-parameter deformation  of $\mathfrak{g}$ is trivial, then $\mathfrak{g}$ is called \emph{rigid}.
\end{defn}
\begin{remark}

Suppose that $[\ ,\ ]_{t}=\sum t^{i}\phi _{i}$ and $[\ ,\ ]'_{t}=\sum t^{i}\phi'_{i}$ are two  formal 1-parameter deformations of $\mathfrak{g}$.
The skew-supersymmery for $[\ ,\ ]_{t}$  follows  from the fact that each $\phi_{i}$ is a cochain.
The bracket  $[\ ,\ ]_{t}$ is $\mathbb{Z}_{2}$-graded because  each $\phi_{i}$ is even.
 The super Jacobi identity implies that $\phi_{1}$ is indeed an even cocycle.
 More generally, if $\phi_{1}$ vanishes identically, the first non-vanishing $\phi_{i}$ will be an even cocycle.
If $[\ ,\ ]_{t}=\sum t^{i}\phi _{i}$ and $[\ ,\ ]'_{t}=\sum t^{i}\phi'_{i}$ are equivalent,
then there exists a linear isomorphism $\widehat{\psi}_{t}=\mathrm{id}_{\mathfrak{g}}+\psi_1 t+\psi_2 t^{2}+\cdots$.
Comparing the two sides, we obtain that
$$\phi_1-\phi'_1=d(\psi_1).$$
It means that every equivalence class of deformations defines uniquely a cohomology class of the even part of the 2-cohomology. We call the cocycle $\phi_1$ the \emph{infinitesimal part} of the  deformation. On the other hand, if
every even 2-cocycle of $\mathfrak{g}$ is a coboundary, then $\mathfrak{g}$ only has trivial deformations.
\end{remark}
\begin{thm}
The 2-cohomology classes are in 1-1 correspondence with non-equivalent infinitesimal deformations.
\end{thm}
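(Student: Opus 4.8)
The plan is to construct an explicit map $\Phi$ from the set of equivalence classes of infinitesimal deformations of $\mathfrak{g}$ to the even second cohomology $\mathrm{H}^{2}(\mathfrak{g},\mathfrak{g})_{\bar{0}}$, sending the class of an infinitesimal deformation $[\ ,\ ]_{t}=[\ ,\ ]+t\phi_{1}$ to the cohomology class $[\phi_{1}]$, and then to verify that $\Phi$ is well defined and bijective. Almost all of the analytic content is already isolated in the preceding remark, so the argument is mostly a matter of assembling those facts in the right order.

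First I would pin down the source and target of $\Phi$ at the level of representatives. By definition an infinitesimal deformation is a bracket $[\ ,\ ]+t\phi_{1}$ with $\phi_{1}\in\mathrm{C}^{2}(\mathfrak{g},\mathfrak{g})_{\bar{0}}$ whose super Jacobi identity holds modulo $t^{2}$; expanding that identity to first order shows its coefficient of $t$ is exactly $\mathrm{d}\phi_{1}$, so $[\ ,\ ]+t\phi_{1}$ is an infinitesimal deformation precisely when $\phi_{1}\in\mathrm{Z}^{2}(\mathfrak{g},\mathfrak{g})_{\bar{0}}$. This makes $[\phi_{1}]$ a legitimate element of $\mathrm{H}^{2}(\mathfrak{g},\mathfrak{g})_{\bar{0}}$ and simultaneously yields surjectivity: given any class in $\mathrm{H}^{2}(\mathfrak{g},\mathfrak{g})_{\bar{0}}$, I pick a cocycle representative $\phi_{1}$ and form $[\ ,\ ]+t\phi_{1}$, an infinitesimal deformation mapping onto that class.

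The heart of the proof is showing that $\Phi$ descends to equivalence classes and is injective, and here the two directions run the same computation forwards and backwards. If two infinitesimal deformations with infinitesimal parts $\phi_{1},\phi'_{1}$ are equivalent via $\widehat{\psi}_{t}=\mathrm{id}_{\mathfrak{g}}+\psi_{1}t+\cdots$, then by the remark comparing the coefficients of $t$ in $\widehat{\psi}_{t}([x,y]_{t})=[\widehat{\psi}_{t}(x),\widehat{\psi}_{t}(y)]'_{t}$ gives $\phi_{1}-\phi'_{1}=\mathrm{d}(\psi_{1})$, so $[\phi_{1}]=[\phi'_{1}]$ and $\Phi$ is well defined. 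Conversely, for injectivity I assume $[\phi_{1}]=[\phi'_{1}]$, write $\phi_{1}-\phi'_{1}=\mathrm{d}(\psi_{1})$ for some $\psi_{1}\in\mathrm{C}^{1}(\mathfrak{g},\mathfrak{g})_{\bar{0}}$, set $\widehat{\psi}_{t}=\mathrm{id}_{\mathfrak{g}}+t\psi_{1}$, and expand both sides to first order; the $t^{0}$-terms coincide, and the $t^{1}$-terms coincide exactly because $\phi_{1}(x,y)-\phi'_{1}(x,y)$ equals $[\psi_{1}(x),y]+[x,\psi_{1}(y)]-\psi_{1}([x,y])$, which is the coboundary $\mathrm{d}(\psi_{1})(x,y)$. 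Thus the two infinitesimal deformations are equivalent.

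The only genuinely delicate point is the bookkeeping of the $\mathbb{Z}_{2}$-signs. Because $\psi_{1}$ and $\phi_{1}$ are even, the Koszul signs in the coboundary operator $\mathrm{d}$ and in the equivalence condition collapse to the classical ones; still, I would verify explicitly that the first-order part of $[\widehat{\psi}_{t}(x),\widehat{\psi}_{t}(y)]'_{t}$ reassembles into $[\psi_{1}(x),y]+[x,\psi_{1}(y)]-\psi_{1}([x,y])$ and that this matches $\mathrm{d}(\psi_{1})(x,y)$ from Section 2 sign for sign, using skew-supersymmetry to rewrite $[\psi_{1}(x),y]=-(-1)^{|x||y|}[y,\psi_{1}(x)]$. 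Once this sign verification is in place, well-definedness, surjectivity, and injectivity together establish the asserted 1-1 correspondence.
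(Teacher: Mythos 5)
Your proposal is correct and takes essentially the same approach as the paper: the paper states this theorem without a separate proof, its content being the remark immediately preceding it, which carries out exactly your well-definedness computation (equivalent deformations satisfy $\phi_1-\phi'_1=\mathrm{d}(\psi_1)$, so each equivalence class determines a cohomology class). Your explicit verification of surjectivity and of injectivity — reversing that computation by taking $\widehat{\psi}_t=\mathrm{id}_{\mathfrak{g}}+t\psi_1$ when $\phi_1-\phi'_1=\mathrm{d}(\psi_1)$ — simply fills in the details the paper leaves implicit.
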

\begin{thm}\label{r1}
If $\mathrm{H}^{2}(\mathfrak{g},\mathfrak{g})_{\bar{0}}=0$, then $\mathfrak{g}$ is rigid.
\end{thm}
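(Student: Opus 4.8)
The plan is to prove rigidity by the standard obstruction argument: starting from an arbitrary formal deformation $[\ ,\ ]_t = [\ ,\ ] + \sum_{i\geq 1} t^i\phi_i$, I would successively remove its lowest-order nontrivial term by an equivalence, using the hypothesis $\mathrm{H}^{2}(\mathfrak{g},\mathfrak{g})_{\bar{0}}=0$ to guarantee that each such term, being a cocycle, is in fact a coboundary. Iterating and passing to a formal limit then shows that every deformation is equivalent to the original bracket, which is exactly rigidity.

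In detail, suppose $[\ ,\ ]_t$ is nontrivial and let $n\geq 1$ be the smallest index with $\phi_n\neq 0$, so that $\phi_1 = \cdots = \phi_{n-1}=0$. By the obstruction computation recorded in the Remark preceding the theorem, the super Jacobi identity forces this first non-vanishing term to be an even $2$-cocycle, $\phi_n\in \mathrm{Z}^{2}(\mathfrak{g},\mathfrak{g})_{\bar{0}}$. Because $\mathrm{H}^{2}(\mathfrak{g},\mathfrak{g})_{\bar{0}}=0$ and the coboundary operator preserves $\mathbb{Z}_{2}$-parity, we obtain $\phi_n = \mathrm{d}(\psi)$ for some even $1$-cochain $\psi\in \mathrm{C}^{1}(\mathfrak{g},\mathfrak{g})_{\bar{0}}$. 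I would then apply the equivalence $\widehat{\psi}_t = \mathrm{id}_{\mathfrak{g}} + t^{n}\psi$; a direct expansion of $\widehat{\psi}_t([x,y]_t) = [\widehat{\psi}_t(x),\widehat{\psi}_t(y)]'_t$ shows that the orders below $n$ are unchanged (still zero) while the new order-$n$ term is $\phi_n - \mathrm{d}(\psi)=0$, which generalizes the identity $\phi_1-\phi'_1=\mathrm{d}(\psi_1)$ already noted in the Remark. Thus $[\ ,\ ]_t$ is equivalent to a deformation whose lowest nontrivial term sits in order $\geq n+1$.

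Repeating this step produces a sequence of equivalences $\widehat{\psi}^{(k)}_t = \mathrm{id}_{\mathfrak{g}} + t^{n_k}\psi^{(k)}$ with strictly increasing orders $n_k$, each pushing the first nonzero coefficient further out. The conclusion then follows by composing them. The step I expect to be the main obstacle is the justification of this formal limit: one must verify that the infinite composition $\cdots\circ\widehat{\psi}^{(2)}_t\circ\widehat{\psi}^{(1)}_t$ is a well-defined automorphism of $\mathfrak{g}[\![t]\!]$. This is handled by a $t$-adic argument: since $n_k\to\infty$, for each fixed $N$ only finitely many factors affect the coefficient of $t^{N}$, so the composite stabilizes modulo $t^{N+1}$ and hence converges coefficientwise in $\mathbb{C}[\![t]\!]$. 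The limiting equivalence carries $[\ ,\ ]_t$ to a bracket all of whose higher terms vanish, that is, to $[\ ,\ ]$ itself, so $\mathfrak{g}$ is rigid.
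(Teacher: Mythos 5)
Your proof is correct and follows essentially the same line the paper takes: the paper states Theorem \ref{r1} without a formal proof, relying on the preceding Remark's observations that the first non-vanishing $\phi_i$ of a deformation is an even cocycle and that equivalences shift $\phi_n$ by a coboundary ($\phi_n-\phi'_n=\mathrm{d}(\psi_n)$). Your write-up simply makes that sketch rigorous, supplying the induction on the order of the first nonzero term and the $t$-adic convergence of the infinite composition of equivalences, both of which are handled correctly.
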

Given a cohomology class $[\alpha]$, a natural question is whether there exists a formal 1-parameter deformation with the infinitesimal part being a representative of $[\alpha]$. In order to answer this question, we introduce the following definition.

\begin{defn}\cite{blue}
A complex $\mathcal{C=}(\mathcal{C}^{n},\mathrm{d})_{n=0}^{\infty}$ with an operation $[\ ,\ ]$ is called a
\emph{Lie $\mathbb{Z}$-graded superalgebra} if
\begin{eqnarray*}
&&[x,y]=-(-1)^{|x||y|+pq}[y,x], \\
&&\mathrm{d}([x,y])= [\mathrm{d}(x),y]+(-1)^{p}[x,\mathrm{d}(y)],\\
&&(-1)^{|x||z|+pr}[x,[y,z]]+(-1)^{|y||x|+qp}[y,[z,x]]+(-1)^{|z||y|+rq}[z,[x,y]]=0,
\end{eqnarray*}
for any $x\in \mathcal{C}^{p}, y \in\mathcal{C}^{q}, z\in\mathcal{C}^{r}$.
\end{defn}
For $\alpha\in \mathrm{C}^{p}(\mathfrak{g},\mathfrak{g}), \beta\in \mathrm{C}^{q}(\mathfrak{g},\mathfrak{g})$, the product $\alpha\beta\in\mathrm{C}^{p+q-1}(\mathfrak{g},\mathfrak{g})$ is defined by
\begin{eqnarray*}
(\alpha\beta)(x_{1},\ldots,x_{p+q-1}) &=& \sum\limits_{1\leq i_1<\cdots<i_{p-1}\leq p+q-1}(-1)^{a_{i_{1},\cdots,i_{p-1}}}\alpha(x_{i_1},\ldots,x_{i_{p-1}}, \\
   &&\beta(x_1,\ldots,\widehat{x}_{i_{1}},
\cdots,\widehat{x}_{i_{p-1}},\ldots,x_{p+q-1}))
\end{eqnarray*}
where
\begin{eqnarray*}
&& a_{i_{1},\cdots,i_{p-1}}=\sum\limits_{s=1}^{p-1}\left(i_s-s+a_{i_{1},\cdots,i_{p-1}}^{s}\right), \\
&& a_{i_{1},\cdots,i_{p-1}}^{s}=|x_{i_{s}}|\left(|\beta|+\sum\limits_{t\in\{1,\cdots,i_{s}-1\}\backslash\{i_1,\cdots,i_{s-1}\}}|x_{t}|\right).
\end{eqnarray*}
Define the bracket operation
$[\alpha,\beta]=\alpha\beta-(-1)^{|\alpha||\beta|+(p-1)(q-1)}\beta\alpha$.
\begin{thm}\cite{blue}
If we denote
$\mathcal{C}^{q}= \mathrm{C}^{q+1}(\mathfrak{g},\mathfrak{g})$ and
$\mathcal{H}^{q}= \mathrm{H}^{q+1}(\mathfrak{g},\mathfrak{g})$, then the above bracket  makes
$\mathcal{C}=\bigoplus_{q}\mathcal{C}^{q}$ and $\mathcal{H}=\bigoplus_{q}\mathcal{H}^{q}$
 Lie $\mathbb{Z}$-graded superalgebras.
 \end{thm}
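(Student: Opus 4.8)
The plan is to recognize $\mathcal{C}=\bigoplus_{q}\mathcal{C}^{q}$ as the super-analogue of the Nijenhuis--Richardson (Gerstenhaber) graded Lie algebra and to verify the three defining axioms of a Lie $\mathbb{Z}$-graded superalgebra directly on $\mathcal{C}$, before descending to $\mathcal{H}$. Throughout, an element $\alpha\in \mathrm{C}^{p}(\mathfrak{g},\mathfrak{g})$ carries two gradings: its cochain degree, which places it in $\mathcal{C}^{p-1}$, and its parity $|\alpha|\in\mathbb{Z}_{2}$. The signs appearing in the axioms are products of the $\mathbb{Z}$-grading sign (built from $p,q,r$) and the Koszul sign (built from the parities), so every computation must track both factors simultaneously.

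First I would dispose of the graded skew-symmetry, axiom (1), which is immediate from the definition $[\alpha,\beta]=\alpha\beta-(-1)^{|\alpha||\beta|+(p-1)(q-1)}\beta\alpha$. Substituting $\alpha\leftrightarrow\beta$ into $[\beta,\alpha]$ and using $(-1)^{2(|\alpha||\beta|+(p-1)(q-1))}=1$ one recovers $[\alpha,\beta]=-(-1)^{|\alpha||\beta|+(p-1)(q-1)}[\beta,\alpha]$, which is precisely axiom (1) for $x=\alpha\in\mathcal{C}^{p-1}$ and $y=\beta\in\mathcal{C}^{q-1}$. No genuine computation is needed here.

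The heart of the proof is the graded Jacobi identity (axiom 3), and my strategy is the standard two-step reduction through a graded right-symmetric (pre-Lie) identity. Concretely, I would show that the composition product is not associative but that its associator is graded-symmetric in its last two slots:
$$(\alpha\beta)\gamma-\alpha(\beta\gamma)=(-1)^{|\beta||\gamma|+(q-1)(r-1)}\left[(\alpha\gamma)\beta-\alpha(\gamma\beta)\right].$$
This is proved by expanding both sides according to the insertion formula and grouping the multi-indices: the terms in which the arguments fed to $\gamma$ and those fed to $\beta$ occupy distinct slots of $\alpha$ (the ``non-nested'' terms) appear on both sides and cancel once the super-shuffle signs are matched, while the ``nested'' terms, where one of $\beta,\gamma$ is inserted into the output of the other, reassemble into $\alpha(\beta\gamma)$ and $\alpha(\gamma\beta)$ with the stated sign. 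Granting this relation, the graded Jacobi identity for the commutator follows by a purely formal manipulation: writing the cyclic sum of double brackets in terms of the composition product and substituting the right-symmetry identity, the six associator terms cancel in pairs. This is exactly the super-version of the classical fact that the commutator of a graded right-symmetric product is a graded Lie bracket.

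It remains to produce the differential-compatibility (axiom 2) and to pass to cohomology. I would identify the bracket of $\mathfrak{g}$ with an element $\mu\in \mathrm{C}^{2}(\mathfrak{g},\mathfrak{g})=\mathcal{C}^{1}$, even and of $\mathbb{Z}$-degree $1$, and check that the coboundary operator of Section 2 coincides, up to a fixed sign, with the inner derivation $\mathrm{d}(\alpha)=[\mu,\alpha]$, while the super Jacobi identity of $\mathfrak{g}$ is equivalent to $[\mu,\mu]=0$. Then axiom (2), namely $\mathrm{d}[\alpha,\beta]=[\mathrm{d}\alpha,\beta]+(-1)^{p-1}[\alpha,\mathrm{d}\beta]$, is just the already-established graded Jacobi identity with one entry fixed to $\mu$, and $\mathrm{d}^{2}=0$ follows from $[\mu,\mu]=0$ together with Jacobi (since $\mathrm{d}^{2}=\tfrac{1}{2}[[\mu,\mu],-]$). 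This makes $(\mathcal{C},\mathrm{d},[\ ,\ ])$ a Lie $\mathbb{Z}$-graded superalgebra. Finally, because $\mathrm{d}$ is a derivation of the bracket, the bracket of two cocycles is a cocycle and the bracket of a cocycle with a coboundary is a coboundary, so $[\ ,\ ]$ descends to the quotient $\mathcal{H}=\bigoplus_{q}\mathcal{H}^{q}$ and inherits axioms (1)--(3) automatically; hence $\mathcal{H}$ is a Lie $\mathbb{Z}$-graded superalgebra as well. I expect the main obstacle to be the pre-Lie identity of the third paragraph: the super-shuffle signs $a_{i_{1},\cdots,i_{p-1}}$, which combine the permutation contribution $i_{s}-s$ with the Koszul contribution $a^{s}_{i_{1},\cdots,i_{p-1}}$ built from the parities, must be tracked with care so that the non-nested insertions cancel and the nested ones assemble with the correct sign. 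Everything downstream is then formal.
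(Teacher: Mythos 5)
The paper never proves this theorem: it is imported verbatim from the cited thesis \cite{blue}, so there is no internal argument to compare yours against. Judged on its own merits, your proposal is correct, and it is the canonical route --- the super-analogue of the Nijenhuis--Richardson/Gerstenhaber construction. Your disposal of axiom (1) is indeed immediate from the definition of the bracket. The genuine content is the graded right-symmetry (pre-Lie) identity for the insertion product, and your accounting of it is accurate: in $(\alpha\beta)\gamma-\alpha(\beta\gamma)$ the nested terms, where $\gamma$ is inserted into an argument slot of $\beta$, assemble precisely to $\alpha(\beta\gamma)$ and cancel, leaving only the insertions of $\beta$ and $\gamma$ into disjoint slots of $\alpha$, and these are symmetric under $\beta\leftrightarrow\gamma$ up to the combined Koszul sign $(-1)^{|\beta||\gamma|+(q-1)(r-1)}$; the graded Jacobi identity for the commutator then follows by the usual formal cancellation. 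Your treatment of axiom (2) is also sound, with two conventions that a full write-up must pin down: (i) the identification $\mathrm{d}(\alpha)=\pm[\mu,\alpha]$, where $\mu\in\mathcal{C}^{1}$ is the bracket of $\mathfrak{g}$ (even parity, $\mathbb{Z}$-degree $1$), has to be checked against the explicit coboundary formula of Section 2 --- the global sign depends on the convention, but a fixed sign affects neither axiom (2) nor $\mathrm{d}^{2}=0$; and (ii) because $\mu$ has odd $\mathbb{Z}$-degree and even parity, $[\mu,\mu]=2\mu\mu$ is not automatically zero, and its vanishing is exactly the super Jacobi identity of $\mathfrak{g}$, which is what makes $\mathrm{d}^{2}=\tfrac{1}{2}\left[[\mu,\mu],\,\cdot\,\right]$ vanish. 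Given axiom (2), the descent of the bracket to $\mathcal{H}$ (cocycles form a subalgebra, bracketing a cocycle with a coboundary gives a coboundary, and the induced differential on $\mathcal{H}$ is zero) is automatic, as you say. So the outline is complete; the only labor you have deferred is the sign bookkeeping in the pre-Lie identity and in the comparison of $\mathrm{d}$ with $[\mu,\,\cdot\,]$, which is exactly where the work lies in the cited source as well.
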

\begin{lem}\cite[Lemma 3.3.1]{blue}\label{Ma1}
The 2-cochain sequence $\{\phi_{n}\}_{n=1}^{\infty}$ defines a formal deformation of $\mathfrak{g}$ if and only if the elements $\phi_{n}$ in $\mathrm{C}^{2}(\mathfrak{g},\mathfrak{g})_{\bar{0}}$ satisfy the equations
$$
  \mathrm{d}(\phi_{n})+\frac{1}{2}\sum_{\underset{i,j> 0}{i+j=n}}[\phi_{i},\phi_{j}]=0,\quad \mathrm{for}\ \mathrm{any}\ n\geq 1.
$$
\end{lem}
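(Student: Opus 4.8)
The plan is to recast the whole deformation condition as a single Maurer--Cartan equation in the graded Lie superalgebra $\mathcal{C}=\bigoplus_q\mathcal{C}^q$ and then read off its homogeneous components in $t$. Set $\phi_0:=[\ ,\ ]$, the original bracket, which is an even element of $\mathcal{C}^1=\mathrm{C}^2(\mathfrak{g},\mathfrak{g})$, and extend both the product $\alpha\beta$ and the bracket of $\mathcal{C}$ by $\C[\![t]\!]$-bilinearity. Writing $\mu_t:=[\ ,\ ]_t=\sum_{i\ge 0}t^i\phi_i$, an element of $\mathcal{C}^1[\![t]\!]$ that is even for every coefficient, I would first recall (as already noted in the Remark) that skew-supersymmetry of $\mu_t$ is automatic since each $\phi_i$ is a $2$-cochain; hence $\mu_t$ is a formal deformation precisely when it satisfies the super Jacobi identity order by order.

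The key reduction is the identity $[\mu,\mu]=2\,\mu\mu$ for any even $\mu\in\mathcal{C}^1$. Taking $\alpha=\beta=\mu\in\mathrm{C}^2$ (so $p=q=2$) in the product formula, $(\mu\mu)(x_1,x_2,x_3)$ is exactly the super Jacobiator of $\mu$, while the definition of the bracket gives $[\mu,\mu]=\mu\mu-(-1)^{0+(p-1)(q-1)}\mu\mu=\mu\mu+\mu\mu$. Thus $\mu$ defines a Lie superalgebra structure if and only if $[\mu,\mu]=0$. Applying this to $\mu=\mu_t$ in $\mathcal{C}[\![t]\!]$, the deformation condition is $[\mu_t,\mu_t]=0$, and expanding by bilinearity
\[
[\mu_t,\mu_t]=\sum_{n\ge 0}t^n\sum_{i+j=n}[\phi_i,\phi_j].
\]
So $\mu_t$ is a deformation iff $\sum_{i+j=n}[\phi_i,\phi_j]=0$ for every $n$; the case $n=0$ reads $[\phi_0,\phi_0]=0$, which merely restates the super Jacobi identity of $\mathfrak{g}$ and holds automatically.

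For $n\ge 1$ I would split off the two terms containing $\phi_0$:
\[
\sum_{i+j=n}[\phi_i,\phi_j]=[\phi_0,\phi_n]+[\phi_n,\phi_0]+\sum_{\underset{i,j>0}{i+j=n}}[\phi_i,\phi_j].
\]
Since $\phi_0,\phi_n$ are even elements of $\mathcal{C}^1$, the graded antisymmetry $[x,y]=-(-1)^{|x||y|+pq}[y,x]$ in $\mathcal{C}$-degree $1$ gives $[\phi_0,\phi_n]=[\phi_n,\phi_0]$. Moreover, comparing the explicit coboundary formula for $\mathrm{d}$ with the product formula shows that on $2$-cochains $\mathrm{d}=[\phi_0,-\,]$, i.e. the coboundary is the adjoint action of the structure cochain. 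Hence the two split-off terms combine to $2\,\mathrm{d}(\phi_n)$, and dividing by $2$ turns the $n$-th component of $[\mu_t,\mu_t]=0$ into $\mathrm{d}(\phi_n)+\tfrac12\sum_{i+j=n,\,i,j>0}[\phi_i,\phi_j]=0$. As every step above is an equivalence, this establishes both directions simultaneously.

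I expect the main obstacle to be the sign bookkeeping in the doubly graded ($\mathbb{Z}\times\mathbb{Z}_2$) setting: verifying that $\mu_t\mu_t$ reproduces the super Jacobiator with the correct signs, that $[\phi_0,\phi_n]=[\phi_n,\phi_0]$, and---most delicately---that the coboundary operator defined by the explicit formula agrees with $[\phi_0,-\,]$ with precisely the sign needed to yield the coefficient $+\tfrac12$. These verifications are routine but error-prone, and fixing at the outset a single consistent convention for the $\mathbb{Z}_2$-degree $|\cdot|$ against the $\mathbb{Z}$-degree is what makes the computation close cleanly.
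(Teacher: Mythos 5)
Your proposal is correct, but note that the paper itself offers no proof of this lemma: it is quoted verbatim from the cited thesis of van den Hijligenberg, so there is nothing internal to compare against. Your argument is the standard one (and essentially the one in that reference): recast the super Jacobi identity for $[\ ,\ ]_t$ as the Maurer--Cartan equation $[\mu_t,\mu_t]=0$ in the Lie $\mathbb{Z}$-graded superalgebra $\mathcal{C}$, expand in powers of $t$, and absorb the $\phi_0$-terms into the coboundary. The sign issues you flag as the main risk do close cleanly under the paper's conventions: for even $\mu\in\mathrm{C}^{2}(\mathfrak{g},\mathfrak{g})$ one has $[\mu,\mu]=2\mu\mu$ with $(\mu\mu)(x,y,z)$ equal to the super Jacobiator rescaled by $(-1)^{|x||z|}$, the shifted degrees give $[\phi_0,\phi_n]=[\phi_n,\phi_0]$, and a direct term-by-term comparison of the coboundary formula with $\phi_0\phi_n+\phi_n\phi_0$ (using skew-supersymmetry of $\phi_n$ to rewrite the $f([x_p,x_q],\ldots)$ terms) confirms $\mathrm{d}(\phi_n)=[\phi_0,\phi_n]$ with the sign $+1$, yielding exactly the coefficient $\tfrac12$ in the statement.
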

\begin{remark}
Suppose that $\phi_{1}$ is a cocycle. If $[\phi_{1},\phi_{1}]=0$, then a formal deformation with the infinitesimal part $\phi_{1}$ can be given by taking $\phi_{i}=0$  for all $i\geq 2$.
\end{remark}
\begin{defn}
A formal 1-parameter deformation is called

(1) a \emph{metric deformation} if it defines a  metric Lie superalgebra.

(2) a \emph{jump deformation} if for any non-zero value of the parameter $t$ near the origin, it gives isomorphic algebra (which is of course different from the original one).

(3) a \emph{smooth deformation} if for any different  non-zero values of the  parameter near the origin, it defines non-isomorphic algebras (by symmetry sometimes there can be coincidences).

\end{defn}
\begin{defn}
An infinitesimal deformation of $\mathfrak{g}$ is called

(1)  \emph{real} if no higher order terms appear in the super Jacobi identity.

(2)  \emph{metric} if it defines a  metric Lie superalgebra on $\mathfrak{g}$, parameterized by $\C[\![t]\!]/(t^{2})$.
 \end{defn}
 \begin{remark}
 By definition, an infinitesimal deformation is real if and only if the bracket of its infinitesimal part is zero.
 \end{remark}
 \begin{remark}
 Note that the corresponding infinitesimal deformation of a metric deformation is a metric infinitesimal deformation, which allows us to construct a metric deformation by starting  with a metric infinitesimal deformation.
  In particular, if this metric infinitesimal deformation is real (i.e., it defines a  metric Lie superalgebra), then  it gives a metric deformation without any further obstruction (for more examples, see \cite{F}).
 \end{remark}
We close this section with the discussions for two special cases, which are Abelian Lie superalgebras and simple Lie superalgebras with non-degenerate Killing forms.
Note that an Abelian Lie superalgebra with an even-dimensional odd part is always metric by  Proposition \ref{dim}.  It is easy to see that Abelian Lie superalgebras deform everywhere because of the triviality of coboundary.
Another special case is  simple Lie superalgebras with non-degenerate Killing forms. They only have trivial deformations because of the triviality of cohomology according to the following Theorem.
\begin{thm}\cite[Theorem 3]{Leites}\label{Leites}
Let $\mathfrak{g}$ be a semi-simple Lie superalgebra over a field of characteristic zero. Suppose that $M$ is a $\mathfrak{g}$-module defined by a nontrivial
irreducible representation $\rho$ and  the super trace form $T(\rho(X)\rho(Y))$, corresponding to this representation  is
non-degenerate. Then $\mathrm{H}^{n}(\mathfrak{g}, M)=0$ for any $n$.
\end{thm}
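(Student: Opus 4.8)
The statement is the $\mathbb{Z}_2$-graded analogue of the Whitehead--Weyl cohomology vanishing theorem, so the plan is to run the Casimir argument, adapted to the super setting with the supertrace form $T(X,Y)=\mathrm{str}(\rho(X)\rho(Y))$ playing the role the Killing form plays classically. First I would build the relevant Casimir operator. Since $T$ is a non-degenerate, even, supersymmetric, invariant bilinear form on $\mathfrak{g}$, I can fix a homogeneous basis $\{X_a\}$ of $\mathfrak{g}$ together with its $T$-dual basis $\{X^a\}$ and set $C_\rho=\sum_a(-1)^{|X_a|}\rho(X_a)\rho(X^a)\in\mathrm{End}(M)$. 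Invariance of $T$ forces $C_\rho$ to commute with all of $\rho(\mathfrak{g})$; as $\rho$ is irreducible over $\C$ and $C_\rho$ is even, the super Schur lemma yields $C_\rho=\lambda\,\mathrm{id}_M$ for a single scalar $\lambda$.

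The next step, which I expect to be the main obstacle, is to prove $\lambda\neq0$. In the purely even case one simply takes a trace: $\lambda\dim M=\sum_a T(X_a,X^a)=\dim\mathfrak{g}\neq0$. In the super case the analogous computation only gives $\lambda\,\mathrm{sdim}\,M=\mathrm{sdim}\,\mathfrak{g}$, and both superdimensions can vanish (e.g.\ already for $\mathfrak{gl}(n|n)$), so this is inconclusive. Instead I would extract $\lambda$ from the infinitesimal character: realizing $M$ as the irreducible of highest weight $\Lambda$, a super Harish-Chandra computation gives $\lambda=\langle\Lambda,\Lambda+2\delta\rangle$, and the point is that the non-degeneracy of $T$ (equivalently, typicality of $\Lambda$, together with $\rho$ being nontrivial) forces this value to be nonzero. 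This is precisely where the hypothesis enters and where the argument departs most sharply from the classical one.

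Granting $\lambda\neq0$, the remainder is a homotopy argument on the complex $\mathrm{C}^\bullet(\mathfrak{g},M)$. Post-composition with $C_\rho=\lambda\,\mathrm{id}_M$ defines a chain endomorphism $\bar C$ of the complex (it commutes with $\mathrm{d}$ because $\lambda\,\mathrm{id}_M$ is $\mathfrak{g}$-equivariant and $\mathrm{d}$ only uses the module action), and it is literally multiplication by $\lambda$. I would then show $\bar C$ is null-homotopic. Using the super Cartan calculus---contractions $\iota_{X_a}$ on $\wedge^\bullet\mathfrak{g}$ and the coefficient actions, satisfying a graded identity of the form $\mathcal{L}_x=\mathrm{d}\,\iota_x\pm\iota_x\,\mathrm{d}$---the operator $h=\sum_a(-1)^{|X_a|}\rho(X^a)\,\iota_{X_a}$ should satisfy $\mathrm{d}h\pm h\mathrm{d}=\lambda\,\mathrm{id}$, the verification being a sign-bookkeeping exercise that uses invariance of $T$ and the explicit signs $b_i,c_{p,q}$ appearing in the definition of $\mathrm{d}$.

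Finally I would conclude. For any cocycle $z\in\mathrm{Z}^n(\mathfrak{g},M)$, the homotopy identity gives $\lambda z=\mathrm{d}(hz)\pm h(\mathrm{d}z)=\mathrm{d}(hz)$, so $\lambda z$ is a coboundary; since $\lambda\neq0$ this forces $[z]=0$, and hence $\mathrm{H}^n(\mathfrak{g},M)=0$ for every $n$. The two genuinely delicate points are thus the nonvanishing of the Casimir eigenvalue $\lambda$ (where the non-degeneracy hypothesis is indispensable and the naive supertrace computation fails) and the correct super-sign bookkeeping in the homotopy identity $\mathrm{d}h\pm h\mathrm{d}=\lambda\,\mathrm{id}$.
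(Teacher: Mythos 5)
First, a point of calibration: the paper never proves this statement --- it is imported verbatim from Leites' note and used exactly once, to conclude that $\mathfrak{osp}(1,2)$ is rigid. So your attempt can only be measured against the standard argument behind that citation, and your skeleton is indeed that argument: build the Casimir $C_\rho$ from $T$-dual bases, invoke super Schur to get $C_\rho=\lambda\,\mathrm{id}_M$, establish the contraction homotopy $\mathrm{d}h\pm h\mathrm{d}=\lambda\,\mathrm{id}$, and conclude vanishing once $\lambda\neq 0$. Your steps 1, 3 and 4 are sound.

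The genuine gap is exactly at the step you single out as the main obstacle, and your proposed repair does not work. The claim that non-degeneracy of $T$ is equivalent to typicality of $\Lambda$, and that this forces $\lambda=\langle\Lambda,\Lambda+2\delta\rangle\neq 0$, fails already for $\mathfrak{g}=\mathfrak{sl}(2|1)$ with $M=\C^{2|1}$ its defining module: there $T(X,Y)=\mathrm{str}(XY)$ is the standard non-degenerate invariant form, yet $M$ is atypical and $\lambda=0$. You can see $\lambda=0$ without any weight theory, from the very identity you discarded: $\lambda\,\mathrm{sdim}\,M=\mathrm{sdim}\,\mathfrak{g}$, and here $\mathrm{sdim}\,\mathfrak{sl}(2|1)=4-4=0$ while $\mathrm{sdim}\,\C^{2|1}=1$. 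Worse, no alternative argument can close this gap, because the statement as quoted is false: the Kac module $K(0)$ over $\mathfrak{sl}(2|1)$ is a highest weight module, hence indecomposable, and sits in a non-split exact sequence $0\to N\to K(0)\to\C\to 0$ whose kernel $N$ is (a parity shift of) the defining module; consequently $\mathrm{H}^{1}(\mathfrak{sl}(2|1),N)\neq 0$ even though $N$ is nontrivial, irreducible, and has non-degenerate supertrace form. The honest hypothesis for the vanishing theorem is not that $T$ is non-degenerate but that the Casimir of $\rho$ acts by a nonzero scalar (equivalently, in the irreducible case, invertibly); typicality is neither equivalent to non-degeneracy of $T$ nor, for general $\mathfrak{g}$, sufficient for $\langle\Lambda,\Lambda+2\delta\rangle\neq 0$.

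What is salvageable is precisely the computation you dismissed as inconclusive: $\lambda\,\mathrm{sdim}\,M=\mathrm{sdim}\,\mathfrak{g}$ proves $\lambda\neq 0$ whenever $\mathrm{sdim}\,\mathfrak{g}\neq 0$, and that covers the only use this paper makes of the theorem, namely $\mathfrak{g}=\mathfrak{osp}(1,2)$ acting on its adjoint module, where $\mathrm{sdim}\,\mathfrak{g}=3-2=1$. So the correct route for the paper's purposes is to add the hypothesis $\mathrm{sdim}\,\mathfrak{g}\neq 0$ (or directly $\lambda\neq 0$), keep your steps otherwise unchanged, and drop the appeal to typicality.
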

\begin{remark}
A basic fact is that every semi-simple Lie algebra is rigid \cite[Theorem 24.1]{white1}. Although it is not the case for semi-simple Lie superalgebras,
any finite-dimensional Lie superalgebra with non-degenerate Killing form can be decomposed into a direct sum of a semi-simple Lie algebra and classical simple Lie superalgebras  \cite{Kac,Musson}.
\end{remark}

\section{Low-dimensional metric Lie superalgebras and their deformations}
From now on, we suppose that $\mathfrak{g}_{m|n}$ is a $m|n$-dimensional non-Abelian metric Lie superalgebra, spanned by
$$\{e_{1},\cdots,e_m \mid e_{m+1},\cdots,e_{m+n}\},$$
where $e_1,\cdots,e_m$ are even and $e_{m+1},\cdots e_{m+n}$ are odd. Define $e^{i,j}_{k}\in\mathrm{C}^{2}(\mathfrak{g}_{m|n},\mathfrak{g}_{m|n})$ by $e^{i,j}_{k}:\ (e_i,e_j)\mapsto e_k$, $1\leq i,j,k\leq m+n$.

The classification and metric deformations of the metric Lie algebras with dimension $\leq 6$ have been studied in \cite{F-P}. In particular, there are only $\mathfrak{sl}(2,\mathbb{C})$ and the diamond Lie algebra $\mathfrak{b}$ which are indecomposable metric Lie algebras  with dimension $\leq 4$.
\begin{lem}\label{c}
Suppose that $\mathfrak{g}$ is a metric Lie superalgebra.

(1) If $\mathfrak{g}_{\bar{0}}$ is $2$-dimensional, then $\mathfrak{g}_{\bar{0}}\cong \C_{2|0}$.

(2) If $\mathfrak{g}_{\bar{0}}$ is $3$-dimensional, then $\mathfrak{g}_{\bar{0}}\cong\mathfrak{sl}(2,\mathbb{C})$.

(3) If $\mathfrak{g}_{\bar{0}}$ is $4$-dimensional, then $\mathfrak{g}_{\bar{0}}\cong\mathfrak{sl}(2,\mathbb{C})\bigoplus \mathbb{C}$ or $\mathfrak{b}$.
\end{lem}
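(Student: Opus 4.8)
The plan is to reduce the statement about $\mathfrak{g}_{\bar{0}}$ to the known classification of low-dimensional metric Lie algebras and then to eliminate the purely abelian possibilities in the $3$- and $4$-dimensional cases by exploiting indecomposability of $\mathfrak{g}$ (the standing hypothesis of this section). By Proposition \ref{dim}, the even part $\mathfrak{g}_{\bar{0}}$ is itself a metric Lie algebra with respect to a form $B_0$. Applying the orthogonal decomposition of Proposition \ref{de} to $\mathfrak{g}_{\bar{0}}$ together with the classification recalled from \cite{F-P}, every metric Lie algebra of dimension $\leq 4$ is an orthogonal direct sum of copies of the $1$-dimensional abelian algebra $\C$, of $\mathfrak{sl}(2,\C)$, and of the diamond algebra $\mathfrak{b}$, there being no indecomposable metric Lie algebra of dimension $2$. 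Listing the possibilities by total dimension gives: in dimension $2$, only $\C\oplus\C=\C_{2|0}$, which already proves (1); in dimension $3$, either $\C^{3}$ or $\mathfrak{sl}(2,\C)$; and in dimension $4$, either $\C^{4}$, $\mathfrak{sl}(2,\C)\oplus\C$, or $\mathfrak{b}$.

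It remains to rule out the abelian even parts $\C^{3}$ and $\C^{4}$ in (2) and (3). Suppose $\mathfrak{g}_{\bar{0}}$ is abelian. If $\mathfrak{g}_{\bar{1}}=0$ then $\mathfrak{g}=\mathfrak{g}_{\bar{0}}$ is abelian, contrary to hypothesis, so by Corollary \ref{odd} we may assume $\dim \mathfrak{g}_{\bar{1}}=2$. I would then consider the representation $\rho\colon \mathfrak{g}_{\bar{0}}\to \mathfrak{gl}(\mathfrak{g}_{\bar{1}})$, $\rho(z)=\ad z|_{\mathfrak{g}_{\bar{1}}}$. Condition (1) of Proposition \ref{dim} says exactly that $\rho(z)$ is skew-symmetric for $B_1$, so $\rho$ maps into $\mathfrak{sp}(\mathfrak{g}_{\bar{1}},B_1)\cong\mathfrak{sl}(2,\C)$; since $\mathfrak{g}_{\bar{0}}$ is abelian, $\rho(\mathfrak{g}_{\bar{0}})$ is an abelian subalgebra of $\mathfrak{sl}(2,\C)$ and hence at most $1$-dimensional. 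Consequently $\dim\ker\rho\geq \dim\mathfrak{g}_{\bar{0}}-1\geq 2$, and every $z\in\ker\rho$ is central in $\mathfrak{g}$, since it kills $\mathfrak{g}_{\bar{1}}$ and commutes with the abelian $\mathfrak{g}_{\bar{0}}$.

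The key computation then uses condition (2) of Proposition \ref{dim}: for $z\in\ker\rho$ and $x,y\in\mathfrak{g}_{\bar{1}}$,
$$B_0([x,y],z)=B_1(x,[y,z])=-B_1(x,\rho(z)y)=0,$$
so $[\mathfrak{g}_{\bar{1}},\mathfrak{g}_{\bar{1}}]\subseteq(\ker\rho)^{\perp}$. Hence the radical $\ker\rho\cap(\ker\rho)^{\perp}$ of $B_0|_{\ker\rho}$ has dimension at most $\dim(\ker\rho)^{\perp}=\dim\mathfrak{g}_{\bar{0}}-\dim\ker\rho\leq 1$, so $B_0|_{\ker\rho}$ has rank at least $\dim\ker\rho-1\geq 1$. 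Over $\C$ a symmetric form of positive rank admits an anisotropic vector, so there is a central $w\in\ker\rho$ with $B_0(w,w)\neq 0$. Then $\C w$ is a $1$-dimensional non-degenerate ideal and $\mathfrak{g}=\C w\oplus(\C w)^{\perp}$ is an orthogonal direct sum of non-degenerate ideals; as $\dim\mathfrak{g}>1$, this contradicts indecomposability. Therefore $\mathfrak{g}_{\bar{0}}$ cannot be abelian, which excludes $\C^{3}$ and $\C^{4}$ and leaves exactly the algebras asserted in (2) and (3).

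The main obstacle is precisely this last exclusion: the classification of metric Lie algebras produces the correct list only after the abelian even parts are removed, and removing them genuinely requires indecomposability of the full superalgebra — indeed $\mathfrak{g}_{2|2}^{1}\oplus\C$ is a non-Abelian metric Lie superalgebra of dimension $5$ with abelian even part $\C^{3}$, so non-Abelianness alone does not suffice. The one point I would verify with care is that the central ideal extracted above is non-degenerate; the rank bound on $B_0|_{\ker\rho}$, obtained by pairing $[\mathfrak{g}_{\bar{1}},\mathfrak{g}_{\bar{1}}]$ against $\ker\rho$ through Proposition \ref{dim}(2), is exactly what guarantees an anisotropic central vector and is the crux of the argument.
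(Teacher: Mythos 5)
The paper offers no proof of Lemma \ref{c} at all: it is stated immediately after the sentence recalling from \cite{F-P} that $\mathfrak{sl}(2,\mathbb{C})$ and the diamond algebra $\mathfrak{b}$ are the only indecomposable non-abelian metric Lie algebras of dimension $\leq 4$, and it is meant to follow from that classification together with Proposition \ref{dim} (the even part of a metric Lie superalgebra is a metric Lie algebra), the abelian possibilities being passed over in silence. Your first paragraph reproduces exactly this implicit argument, and your diagnosis of the silence is correct and valuable: as literally stated, parts (2) and (3) are false (already $\C^{3|0}$ violates (2)), and your example $\mathfrak{g}_{2|2}^{1}\oplus\C$ rightly shows that adding only ``$\mathfrak{g}$ non-Abelian'' does not repair them, so some further hypothesis is genuinely needed.

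The gap is in your repair. Having assumed $\mathfrak{g}$ indecomposable, you write ``by Corollary \ref{odd} we may assume $\dim\mathfrak{g}_{\bar{1}}=2$''; but Corollary \ref{odd} only gives that $\dim\mathfrak{g}_{\bar{1}}$ is even, and the reduction to $\dim\mathfrak{g}_{\bar{1}}=2$ silently uses the bound $\dim\mathfrak{g}\leq 6$, which appears neither in the statement of the lemma nor among your hypotheses. This is not cosmetic: your whole mechanism rests on $\mathfrak{sp}(\mathfrak{g}_{\bar{1}},B_1)\cong\mathfrak{sl}(2,\C)$ having no abelian subalgebra of dimension $>1$, which is what forces $\dim\ker\rho\geq\dim\mathfrak{g}_{\bar{0}}-1\geq 2$. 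If $\dim\mathfrak{g}_{\bar{1}}=2k\geq 4$, then $\mathfrak{sp}(2k,\C)$ contains abelian subalgebras of dimension $k(k+1)/2\geq 3$ (for a Lagrangian splitting $V=L\oplus L'$, take the maps sending $L'$ into $L$ and annihilating $L$), so $\rho$ may be injective, $\ker\rho$ may vanish, and no central anisotropic vector is produced. Hence what you have actually proved is the lemma under the hypotheses ``$\mathfrak{g}$ indecomposable and $\dim\mathfrak{g}\leq 6$'', and the dimension restriction must be stated. Two smaller remarks. First, the inclusion $[\mathfrak{g}_{\bar{1}},\mathfrak{g}_{\bar{1}}]\subseteq(\ker\rho)^{\perp}$, which you call the crux, is never used: the bound $\dim\bigl(\ker\rho\cap(\ker\rho)^{\perp}\bigr)\leq\dim(\ker\rho)^{\perp}=\dim\mathfrak{g}_{\bar{0}}-\dim\ker\rho$ is plain linear algebra from the non-degeneracy of $B_0$, and it alone yields the anisotropic central vector. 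Second, indecomposability is arguably the wrong hypothesis for the way the paper uses the lemma: it is applied to the deformed algebras $(\mathfrak{g},[\ ,\ ]_t)$, which are not known to be indecomposable (some are even shown to be isomorphic to the decomposable algebra $\mathfrak{osp}(1,2)\bigoplus\C_{1|0}$); what is available there, and what makes (2) and (3) immediate from the classification in your first paragraph, is that the deformed even part remains non-abelian. This last point is a defect of the paper's formulation rather than of your argument, but it shows the hypothesis you chose cannot be the one the author intends.
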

Suppose that  $\mathfrak{g}$ is a non-Abelian metric Lie superalgebra with dimension $\leq 6$ which has a non-zero odd part. Then $\mathrm{dim}\ \mathfrak{g}_{\bar{0}}=2$ or $\mathrm{dim}\ \mathfrak{g}_{\bar{1}}=2$
by Corollary \ref{odd} and Lemma \ref{abel}.
Consequently, all indecomposable metric Lie superalgebras which have non-zero odd parts have been classified by 1-dimensional double extension in \cite{class}.
By Corollary \ref{odd} and Lemma \ref{abel}, these metric Lie superalgebras are $2|2$, $3|2$, $4|2$ and $2|4$ dimensional.
In this section, we study nontrivial metric deformations of these metric Lie superalgebras case by case and we also point out jump and smooth deformations among them.

\subsection{$2|2$-dimensional metric Lie superalgebras.} There are  two indecomposable metric Lie superalgebras:
 $\mathfrak{g}_{2|2}^{1}$ and $\mathfrak{g}_{2|2}^{2}$ with nontrivial brackets \cite{class}:
 $$
 \begin{tabular}{lrl}
   (1) & $\mathfrak{g}_{2|2}^{1}$: & $[e_4,e_4]=e_1,\ [e_2,e_4]=e_3,$ \\
   (2) & $\mathfrak{g}_{2|2}^{2}$: & $[e_3,e_4]=e_1,\
[e_2,e_3]=e_3,\
[e_2,e_4]=-e_4$. \\
 \end{tabular}
 $$
We already have obtained  the classification by double extension in Proposition \ref{2,2}.
\begin{thm}\label{1}
The algebra $\mathfrak{g}_{2|2}^{1}$ has only one  metric deformation, which is a jump deformation to $\mathfrak{g}_{2|2}^{2}$.
\end{thm}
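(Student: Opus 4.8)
The plan is to reduce the statement to a computation of the even second cohomology, followed by an explicit identification of the deformed algebra. First I would exploit the double extension picture. Since $\dim\mathfrak{g}_{\bar 0}=2$ forces $\mathfrak{g}_{\bar 0}\cong\C_{2|0}$ by Lemma \ref{c}(1), Proposition \ref{d2} realizes $\mathfrak{g}_{2|2}^1$ as the $1$-dimensional double extension of the symplectic space $\C_{0|2}=\mathrm{span}\{h_1,h_2\}$ by a derivation in $\mathrm{Der}_{\bar 0}(\C_{0|2},B_{\C_{0|2}})=\mathfrak{sp}(\C_{0|2})\cong\mathfrak{sl}(2,\C)=\mathrm{span}\{D_1,D_2,D_3\}$. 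Comparing brackets with the Example of Section 1.2 (Case 3), $\mathfrak{g}_{2|2}^1$ is the double extension by the nilpotent $D_3$, under the normalization $e_1=-D^{\ast},\,e_2=D_3,\,e_3=h_2,\,e_4=h_1$, so that $B$ pairs only $e_1$ with $e_2$ and $e_3$ with $e_4$. The decisive structural observation is that $\mathrm{ad}(e_2)$ restricted to $\mathfrak{g}_{\bar 1}=\mathrm{span}\{e_3,e_4\}$ is the nilpotent block $\left(\begin{smallmatrix}0&1\\0&0\end{smallmatrix}\right)$ for $\mathfrak{g}_{2|2}^1$, whereas for $\mathfrak{g}_{2|2}^2$ it is the semisimple $\mathrm{diag}(1,-1)$; a metric deformation must therefore perturb the defining derivation $D_3$ inside $\mathfrak{sp}(\C_{0|2})$, pushing it from the nilpotent cone into the regular semisimple locus.

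Next I would compute $\mathrm{H}^2(\mathfrak{g}_{2|2}^1,\mathfrak{g}_{2|2}^1)_{\bar 0}$. I would write a general even $2$-cochain as a combination of the $e^{i,j}_k$ (recording the graded symmetry: $e^{i,j}_k$ is antisymmetric when $e_i,e_j$ are both even and symmetric when both are odd), impose $\mathrm{d}\phi=0$ by evaluating the differential of Section 2 on all triples of basis vectors, and solve the resulting linear system for $\mathrm{Z}^2(\mathfrak{g}_{2|2}^1,\mathfrak{g}_{2|2}^1)_{\bar 0}$; subtracting the coboundaries $\mathrm{d}\psi$ with $\psi\in\mathrm{C}^1_{\bar 0}$ then yields $\mathrm{H}^2_{\bar 0}$. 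On this space I would impose the metricity constraint $B(\phi(x,y),z)=B(x,\phi(y,z))$, the infinitesimal form of invariance of $B$. The expectation, consistent with the Slodowy slice picture for the orbit of $D_3$ in $\mathfrak{sl}(2,\C)$, is that the metric classes span a one-dimensional subspace, represented by
$$\phi_1=-e^{3,4}_1-e^{2,3}_3+e^{2,4}_4,$$
which is precisely the infinitesimal of the derivation deformation $D_3\rightsquigarrow D_3+tD_1$. This one-dimensionality is exactly what produces the phrase \emph{only one} in the statement.

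Since the family $[\ ,\ ]_t=[\ ,\ ]+t\phi_1$ arises from the genuine double extensions of $(\C_{0|2},B_{\C_{0|2}})$ by the derivations $D_3+tD_1\in\mathrm{Der}_{\bar 0}(\C_{0|2},B_{\C_{0|2}})$, Theorem \ref{B-B} guarantees that each $[\ ,\ ]_t$ is a Lie superalgebra bracket, and a direct unwinding via Corollary \ref{double} shows the structure constants are \emph{exactly} linear in $t$. Hence, by Lemma \ref{Ma1}, the deformation is real: $\mathrm{d}\phi_1=0$ and $[\phi_1,\phi_1]=0$, with $\phi_i=0$ for $i\geq 2$. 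Choosing the invariant form on $\C D$ to be $B_{\C D}(D,D)=0$, the induced scalar product $B_\mathfrak{g}$ of Theorem \ref{B-B} is independent of $t$, so $[\ ,\ ]_t$ remains invariant for the fixed $B$; the deformation is therefore metric.

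Finally I would identify the deformed algebra. For $t\neq 0$ the derivation $D_3+tD_1=\left(\begin{smallmatrix}t&0\\1&-t\end{smallmatrix}\right)$ has distinct eigenvalues $\pm t$, hence is regular semisimple, so $\mathfrak{g}_t$ is the double extension appearing in the Example with parameters $k_1=t,\,k_2=0,\,k_3=1$; the basis change of that Example (Case 6) transforms $\mathfrak{g}_t$ isometrically into the algebra $\mathfrak{g}_{2|2}^2$ of Proposition \ref{2,2}. Because this holds for every $t\neq0$ with the single target $\mathfrak{g}_{2|2}^2\not\cong\mathfrak{g}_{2|2}^1$, the deformation is a jump deformation (and its nontriviality re-confirms that $\phi_1$ is not a coboundary). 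The main obstacle is the bookkeeping in the cohomology step: tracking the graded signs in $\mathrm{d}$ and in the invariance condition so as to pin down $\mathrm{H}^2_{\bar 0}$ and, in particular, to prove that its metric subspace is \emph{exactly} one-dimensional. Once that is settled, realness and the explicit $t$-dependent isometry to $\mathfrak{g}_{2|2}^2$ are comparatively routine.
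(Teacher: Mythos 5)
Your structural picture (double extension, nilpotent versus semisimple derivation) is attractive, but the cohomological core of your argument is wrong: the cochain $\phi_1=-e^{3,4}_1-e^{2,3}_3+e^{2,4}_4$ that you claim spans the metric part of $\mathrm{H}^{2}_{\bar{0}}$ is a coboundary. Take the even $1$-cochain $\psi$ with $\psi(e_3)=-e_4$ and $\psi(e_1)=\psi(e_2)=\psi(e_4)=0$; then $\mathrm{d}\psi(e_2,e_3)=[e_2,-e_4]=-e_3$, $\mathrm{d}\psi(e_2,e_4)=-\psi([e_2,e_4])=-\psi(e_3)=e_4$, $\mathrm{d}\psi(e_3,e_4)=[e_4,\psi(e_3)]=-[e_4,e_4]=-e_1$, and $\mathrm{d}\psi$ vanishes on all remaining pairs, so $\phi_1=\mathrm{d}\psi$ and $[\phi_1]=0$. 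This is not an accident of sign conventions: in your own picture, the direction $D_1$ is tangent at $D_3$ to the cone $\{k_1^{2}+k_2k_3=0\}$ of nilpotent derivations (the orbit of $D_3$ under conjugation and rescaling), and tangent-to-the-orbit directions are exactly coboundaries. The honest transverse (Slodowy-slice) direction at $D_3$ is $D_2$, i.e.\ $k_2=t$, and that perturbation produces precisely the paper's cocycle $f_4=e^{2,3}_4-e^{3,3}_1$, which is genuinely non-exact (no coboundary has an $e_4$-component on the pair $(e_2,e_3)$). So your claimed one-dimensional ``metric subspace'' is spanned by the zero class, and the classification step --- the part of the theorem asserting \emph{only one} --- collapses: taken literally, your computation would say every metric infinitesimal deformation is trivial, contradicting the jump you then construct.

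What survives is the construction half. Your linear family $[\ ,\ ]+t\phi_1$ is indeed a Lie superalgebra for every $t$ (it is the double extension by $D_3+tD_1$), it is metric, and for $t\neq0$ it is isomorphic to $\mathfrak{g}_{2|2}^{2}$. But since its infinitesimal is exact, it is equivalent, via $\widehat{\psi}_{t}=\mathrm{id}+t\psi$ (the basis change $e_3\mapsto e_3+te_4$), to the deformation $[\ ,\ ]+t^{2}f_4$: the curve $D_3+tD_1$ leaves the nilpotent cone only at second order, so you have rediscovered the paper's unique jump deformation in a disguised parametrization, not proved its uniqueness. The paper's proof does what your outline defers: it computes $\mathrm{H}^{2}_{\bar{0}}$ to be four-dimensional, spanned by $f_1,\dots,f_4$; kills $f_1,f_2$ because the deformed even part must remain abelian (Lemma \ref{c}(1)); kills $f_3$ from invariance of the scalar product, $B([e_2,e_3]_t,e_4)=B(e_2,[e_3,e_4]_t)=0$ together with $B(e_3,e_4)\neq 0$; and then integrates the survivor $f_4$ (using $[f_4,f_4]=0$) with an explicit basis change to $\mathfrak{g}_{2|2}^{2}$. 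Note finally that ``metric deformation'' in the paper means the deformed algebra admits \emph{some} invariant scalar product, possibly depending on $t$; your infinitesimal condition $B(\phi(x,y),z)=B(x,\phi(y,z))$ with the original fixed $B$ is a priori too restrictive for the uniqueness direction, and would need justification even after the representative is corrected.
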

\begin{proof}
The even part of the 2-cohomology space is 4-dimensional, spanned by the  representative  even cocycles:
$$f_{1}=e^{1,2}_{1}-\frac{1}{2}e^{3,4}_{1},\
f_{2}=e^{1,2}_{2}+e^{1,3}_{3}-\frac{1}{2}e^{3,4}_{2},\
f_{3}=e^{2,3}_{3},\
f_{4}=e^{2,3}_{4}-e^{3,3}_{1}.$$
Let $[\ ,\ ]_t$ be the metric infinitesimal deformation defined by $\sum_{i=1}^{4}a_if_i$ ($a_i\in\C$). Then  $(\mathfrak{g}_{2|2}^{1},[\ ,\ ]_t)$, parameterized by $\C[\![t]\!]/(t^{2})$, is a metric Lie superalgebra w.r.t an invariant scalar product $B$. We have the  even part $((\mathfrak{g}_{2|2}^{1})_{\bar{0}},[\ ,\ ]_t)\cong\C_{2|0}$ by Lemma \ref{c} (1). So $a_{1}=a_2=0$. Since the invariance of $B$, one has
$$B([e_2,e_3]_t,e_4)=B(e_2,[e_3,e_4]_t)=0.$$
Thus $a_3=0$.
From $[f_4,f_4]=0$,
 the cocycle $a_4f_4$ ($a_4\neq 0$) defines a metric and real infinitesimal deformation isomorphic to $\mathfrak{g}_{2|2}^{2}$ via the change of basis:
$$
e'_1=-2a_4te_1,\ e'_{2}=\frac{1}{\sqrt{a_4t}}e_2,\ e'_3=e_3+\sqrt[]{a_{4}t}e_4,\ e'_4=e_3-\sqrt[]{a_{4}t}e_4.
$$
\end{proof}

\begin{thm}
The algebra $\mathfrak{g}_{2|2}^{2}$ only has trivial metric deformation.
\end{thm}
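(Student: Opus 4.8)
The plan is to follow the same strategy as in the proof of Theorem \ref{1}. First I would compute the even part of the second cohomology $\mathrm{H}^{2}(\mathfrak{g}_{2|2}^{2},\mathfrak{g}_{2|2}^{2})_{\bar{0}}$ by solving $\mathrm{d}\phi=0$ on the space of even $2$-cochains and quotienting by the image of $\mathrm{d}$ on even $1$-cochains. In terms of the basis $e^{i,j}_{k}$, an even $2$-cochain is determined by its values $\phi(e_1,e_2)$ in the even part, the four mixed values $\phi(e_a,e_b)$ (one even, one odd index) in the odd part, and the three symmetric odd products $\phi(e_3,e_3),\phi(e_3,e_4),\phi(e_4,e_4)$ in the even part; this is a finite linear system whose solution yields explicit representative cocycles $f_1,\ldots,f_k$. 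A general metric infinitesimal deformation then has the form $[\ ,\ ]_t=[\ ,\ ]+t\sum_i a_i f_i$ with $a_i\in\C$, and by hypothesis $(\mathfrak{g}_{2|2}^{2},[\ ,\ ]_t)$ is a metric Lie superalgebra with respect to an invariant scalar product $B$, parameterized by $\C[\![t]\!]/(t^{2})$.

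Next I would impose the two structural constraints that pin down the coefficients. By Lemma \ref{c}(1), since $\dim(\mathfrak{g}_{2|2}^{2})_{\bar{0}}=2$, the deformed even part must remain the abelian algebra $\C_{2|0}$; hence $[e_1,e_2]_t=0$, which forces the coefficient of every cocycle with a nonzero $(e_1,e_2)$-component to vanish. Then I would use the invariance of $B$ under the deformed bracket: for suitable homogeneous triples the identity $B([x,y]_t,z)=B(x,[y,z]_t)$ produces linear relations among the surviving $a_i$. The decisive relations should come from pairing the grading element $e_2$ against the odd generators, for instance $B([e_2,e_3]_t,e_4)=B(e_2,[e_3,e_4]_t)$ and its analogue with $e_3,e_4$ interchanged, exploiting that $\mathrm{ad}(e_2)$ acts diagonally with eigenvalues $\pm 1$ on the odd part.

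Finally I would check that these constraints force every remaining $a_i$ to vanish, that is, that each metric-compatible even $2$-cocycle is already an even coboundary. Once the infinitesimal part is trivial, there is no nontrivial metric deformation: a formal deformation whose infinitesimal part is a coboundary is equivalent to the original bracket, so $\mathfrak{g}_{2|2}^{2}$ admits only the trivial metric deformation. The main obstacle I anticipate is the cohomology bookkeeping, since the even $2$-cochain space is $16$-dimensional, so $\mathrm{H}^{2}_{\bar{0}}$ need not vanish and $\mathfrak{g}_{2|2}^{2}$ may well support nontrivial non-metric deformations; the real work is to show that, after jointly imposing the abelian-even-part condition and the invariance of $B$, no nontrivial cohomology class survives as a metric infinitesimal deformation.
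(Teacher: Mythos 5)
Your strategy coincides with the paper's: compute $\mathrm{H}^{2}(\mathfrak{g}_{2|2}^{2},\mathfrak{g}_{2|2}^{2})_{\bar{0}}$, write a general metric infinitesimal deformation $[\ ,\ ]+t\sum_i a_i f_i$, and kill the coefficients using Lemma \ref{c}(1) and invariance of $B$, exactly as in the proof of Theorem \ref{1}. The problem is that, as written, your text is a plan rather than a proof: the decisive step, the actual determination of the even 2-cohomology, is precisely what you defer as ``the real work.'' In the paper that computation essentially \emph{is} the proof: $\mathrm{H}^{2}(\mathfrak{g}_{2|2}^{2},\mathfrak{g}_{2|2}^{2})_{\bar{0}}$ turns out to be 1-dimensional, spanned by the single representative cocycle $f=-e^{1,2}_{1}+e^{2,4}_{4}$, and since this generator has a nonzero $(e_1,e_2)$-component, Lemma \ref{c}(1) alone forces $a=0$; the invariance relations such as $B([e_2,e_3]_t,e_4)=B(e_2,[e_3,e_4]_t)$, which you single out as the decisive constraints, are never needed here (they were needed for $\mathfrak{g}_{2|2}^{1}$, whose cohomology is 4-dimensional). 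Until the linear system for cocycles modulo coboundaries is actually solved, one cannot know whether the constraints you list suffice, so the proposal does not yet establish the theorem.

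A secondary flaw is the closing inference: the claim that ``a formal deformation whose infinitesimal part is a coboundary is equivalent to the original bracket'' is false in general. If $\phi_1=\mathrm{d}(\psi_1)$ one can gauge $\phi_1$ away, but the deformation $[\ ,\ ]+t^{2}\phi_2$ with $\phi_2$ a nontrivial cocycle satisfying $[\phi_2,\phi_2]=0$ has vanishing infinitesimal part and is nevertheless a nontrivial deformation. To pass from ``every metric \emph{infinitesimal} deformation is trivial'' to ``every metric \emph{formal} deformation is trivial'' one must iterate: after gauging, the first nonvanishing term $\phi_i$ is an even cocycle (the Remark in Section 2), and reducing the metric deformation modulo $t^{i+1}$ one checks that $[\ ,\ ]+s\phi_i$, equipped with the corresponding truncation of the deformed invariant form, is again a metric infinitesimal deformation, so the same constraint kills $\phi_i$ as well. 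The paper's proof compresses this step into its last sentence but does not assert the false general statement on which your version of the conclusion rests.
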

\begin{proof}
The even part of  the 2-cohomology space is 1-dimensional, spanned by the  representative even cocycle
$f=-e^{1,2}_1+e^{2,4}_4$.
Let $[\ ,\ ]_t$ be the metric infinitesimal deformation defined by $af$ ($a\in\C$). By Lemma \ref{c} (1), we have  $((\mathfrak{g}_{2|2}^{2})_{\bar{0}},[\ ,\ ]_t)\cong \C_{2|0}$, parameterized by $\C[\![t]\!]/(t^{2})$. So $a=0$. Therefore it only has trivial metric deformations.
\end{proof}

\subsection{$3|2$-dimensional metric Lie superalgebras.} There is only one indecomposable metric Lie superalgebra $\mathfrak{osp}(1,2)$ with nontrivial brackets \cite{class}:
\begin{eqnarray*}
&&[e_1,e_2]=e_3,\ [e_1,e_3]=-2e_1,\ [e_2,e_3]=2e_2,\ [e_1,e_5]=-e_4,\ [e_2,e_4]=-e_5,\\
&&[e_3,e_4]=e_4,\ [e_3,e_5]=-e_5,\ [e_4,e_4]=\frac{1}{2}e_1,\ [e_4,e_5]=\frac{1}{4}e_3, \ [e_5,e_5]=-\frac{1}{2}e_2.
\end{eqnarray*}
\begin{thm}
The algebras  $\mathfrak{osp}(1,2)$ only has trivial metric deformation.
\end{thm}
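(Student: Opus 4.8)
The plan is to show that $\mathfrak{osp}(1,2)$ is rigid; triviality of its metric deformations is then immediate, since a metric deformation is in particular a formal $1$-parameter deformation. By Theorem \ref{r1}, rigidity will follow once I establish that $\mathrm{H}^{2}(\mathfrak{osp}(1,2),\mathfrak{osp}(1,2))_{\bar{0}}=0$. The natural tool here is Theorem \ref{Leites}, applied to the module $M=\mathfrak{g}=\mathfrak{osp}(1,2)$ with $\rho=\mathrm{ad}$ the adjoint representation: if its hypotheses hold, then $\mathrm{H}^{n}(\mathfrak{osp}(1,2),\mathfrak{osp}(1,2))=0$ for every $n$, and in particular the even part of the second cohomology vanishes.

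First I would verify the hypotheses of Theorem \ref{Leites}. The algebra $\mathfrak{osp}(1,2)$ is a classical simple Lie superalgebra, so its adjoint representation is nontrivial and irreducible: any nonzero invariant subspace would be a nontrivial $\mathbb{Z}_{2}$-graded ideal, which simplicity forbids. Moreover, the super trace form attached to the adjoint representation, $T(\mathrm{ad}_{X}\,\mathrm{ad}_{Y})=\mathrm{str}(\mathrm{ad}_{X}\,\mathrm{ad}_{Y})$, is by definition the Killing form of $\mathfrak{osp}(1,2)$. Thus everything reduces to a single remaining point.

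The crucial step, and the one I expect to be the main obstacle, is to confirm that the Killing form of $\mathfrak{osp}(1,2)$ is non-degenerate, so that Theorem \ref{Leites} truly applies. I would do this in one of two ways. Directly, using the brackets listed above, one writes the matrices $\mathrm{ad}_{e_{i}}$ in the basis $\{e_{1},\dots,e_{5}\}$, forms the Gram matrix $\bigl(\mathrm{str}(\mathrm{ad}_{e_{i}}\,\mathrm{ad}_{e_{j}})\bigr)_{i,j}$, and checks that its determinant is nonzero; this is the routine computation I would carry out but not reproduce here. Alternatively, and more conceptually, since $\mathfrak{osp}(1,2)$ is simple its space of invariant supersymmetric bilinear forms is one-dimensional, so the given invariant scalar product $B$ must be a nonzero scalar multiple of the Killing form, forcing the latter to be non-degenerate. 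Either route also agrees with the standard fact that $\mathfrak{osp}(1,2n)$ are precisely the basic classical Lie superalgebras whose Killing forms are non-degenerate, as noted in the remark following Theorem \ref{Leites}.

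With non-degeneracy established, Theorem \ref{Leites} gives $\mathrm{H}^{2}(\mathfrak{osp}(1,2),\mathfrak{osp}(1,2))=0$, hence $\mathrm{H}^{2}(\mathfrak{osp}(1,2),\mathfrak{osp}(1,2))_{\bar{0}}=0$, and Theorem \ref{r1} then yields that $\mathfrak{osp}(1,2)$ is rigid. Consequently every formal $1$-parameter deformation is equivalent to the original bracket, so in particular $\mathfrak{osp}(1,2)$ admits only the trivial metric deformation, which completes the argument.
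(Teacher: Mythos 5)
Your proof is correct and follows essentially the same route as the paper's: the paper's entire argument is that the Killing form of $\mathfrak{osp}(1,2)$ is non-degenerate (citing Kac), whence rigidity follows from Theorem \ref{Leites} applied to the adjoint module together with Theorem \ref{r1}; you have simply spelled out the verification of the hypotheses (simplicity gives irreducibility of the adjoint representation, and its super trace form is the Killing form).

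One caveat: your ``more conceptual'' alternative for non-degeneracy is stated backwards. Uniqueness (up to scalar) of invariant supersymmetric forms on a simple Lie superalgebra gives that the Killing form is a scalar multiple of the invariant scalar product $B$, possibly the \emph{zero} multiple --- and this genuinely happens for simple Lie superalgebras such as $\mathfrak{psl}(n|n)$, $\mathfrak{osp}(2n+2|2n)$ and $D(2,1;\alpha)$, whose Killing forms vanish identically even though they carry non-degenerate invariant forms. So that argument alone cannot force non-degeneracy of the Killing form; you still need the direct computation of the Gram matrix, or the standard fact about $\mathfrak{osp}(1,2n)$ that the paper cites, which your primary route does provide.
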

\begin{proof}
Since the Killing form of $\mathfrak{osp}(1,2)$ is non-degenerate \cite{Kac}, $\mathfrak{osp}(1,2)$ is rigid by Theorem \ref{r1} and Theorem \ref{Leites}.
\end{proof}

\subsection{$4|2$-dimensional metric Lie superalgebras.} There are the following indecomposable metric Lie superalgebras:
$\mathfrak{g}_{4|2}^{1}$, $\mathfrak{g}_{4|2}^{2}(\lambda)$ $(\lambda\neq 0)$, and $\mathfrak{g}_{4|2}^{3}$ with nontrivial brackets \cite{class}:
$$
\begin{tabular}{lrl}
  (1) & $\mathfrak{g}_{4|2}^{1}$ : &$[e_1,e_2]=e_2$,\ $[e_1,e_3]=-e_3$, $[e_2,e_3]=e_4$,\\
      &                            &$[e_1,e_6]=e_5$,\ $[e_6,e_6]=e_4$, \\
  (2) & $\mathfrak{g}_{4|2}^{2}(\lambda)$ $(\lambda\neq 0)$ : & $[e_1,e_2]=e_2$,\ $[e_1,e_3]=-e_3$, $[e_2,e_3]=e_4$,\\ & &$[e_1,e_5]=\lambda e_5$,\ $[e_1,e_6]=-\lambda e_6$,\ $[e_5,e_6]=\lambda e_4$,  \\
  (3) & $\mathfrak{g}_{4|2}^{3}$ : & $[e_1,e_2]=e_2$,\ $[e_1,e_3]=-e_3$, $[e_2,e_3]=e_4$,\\ & &$[e_1,e_5]=\frac{1}{2}e_5$,\ $[e_1,e_6]=-\frac{1}{2}e_6$,\ $[e_2,e_6]=e_5$, \\
  &   &$[e_5,e_6]=\frac{1}{2}e_4$,\ $[e_6,e_6]=e_3$.
\end{tabular}
$$
Here $\mathfrak{g}_{4|2}^{2}(\lambda_{1})\mathop{\simeq}\limits^{i} \mathfrak{g}_{4|2}^{2}(\lambda_{2})$ if and only if $\lambda_{1}=\lambda_{2}$. However, $\mathfrak{g}_{4|2}^{2}(\lambda)$ is isomorphic to $\mathfrak{g}_{4|2}^{2}(-\lambda)$ via the change of basis:
$$e'_1=-e_1,\ e'_2=e_3,\ e'_3=e_2,\ e'_4=-e_4,\ e'_5=e_5,\ e'_6=e_6.$$
\begin{thm}
The algebra $\mathfrak{g}_{4|2}^{1}$ has only one metric deformation, which is a smooth deformation to the family $\mathfrak{g}_{4|2}^{2}(\lambda)$ around $\lambda=0$.
\end{thm}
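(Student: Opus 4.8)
The plan is to follow the template established in the proof of Theorem \ref{1}: first determine the even part of the second cohomology $\mathrm{H}^{2}(\mathfrak{g}_{4|2}^{1},\mathfrak{g}_{4|2}^{1})_{\bar 0}$ explicitly, then cut the space of infinitesimal deformations down by imposing the metric condition, and finally integrate the surviving cocycle to a genuine formal deformation and identify it with the target family $\mathfrak{g}_{4|2}^{2}(\lambda)$. First I would compute $\mathrm{H}^{2}(\mathfrak{g}_{4|2}^{1},\mathfrak{g}_{4|2}^{1})_{\bar 0}$ by writing the coboundary operator $\mathrm{d}$ on $\mathrm{C}^{1}$ and $\mathrm{C}^{2}$ in the basis $\{e^{i,j}_{k}\}$, solving $\mathrm{d}f=0$ among the even $2$-cochains and factoring out $\mathrm{d}(\mathrm{C}^{1}(\mathfrak{g}_{4|2}^{1},\mathfrak{g}_{4|2}^{1})_{\bar 0})$. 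This yields a basis of representative even cocycles $f_{1},\dots,f_{r}$, and I would write a general metric infinitesimal deformation as $\phi=\sum_{i}a_{i}f_{i}$ with $[\ ,\ ]_{t}=[\ ,\ ]+t\phi$.

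Next I would impose that $(\mathfrak{g}_{4|2}^{1},[\ ,\ ]_{t})$ be metric over $\C[\![t]\!]/(t^{2})$. The even part is $4$-dimensional, so by Lemma \ref{c}(3) its deformed bracket must stay isomorphic either to $\mathfrak{sl}(2,\C)\oplus\C$ or to the diamond algebra $\mathfrak{b}$; since the starting even part is exactly $\mathfrak{b}$, any cocycle whose even component would move the algebra outside these two isomorphism classes must carry coefficient zero, which forces the deformation to be supported on the odd and mixed brackets. I would then invoke invariance of the scalar product $B$, namely relations $B([e_{i},e_{j}]_{t},e_{k})=B(e_{i},[e_{j},e_{k}]_{t})$ together with Proposition \ref{dim}, to force the remaining coefficients into a single one-parameter family, establishing that there is (up to rescaling) exactly one nontrivial metric infinitesimal deformation. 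This is the step I expect to be the main obstacle: it demands both an accurate cohomology computation and careful bookkeeping of which linear combinations of cocycles preserve non-degeneracy and supersymmetry of $B$ while keeping the even part in the admissible list.

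Finally, for the surviving cocycle $\phi$ I would verify $[\phi,\phi]=0$ using the graded bracket of Lemma \ref{Ma1}, so that the infinitesimal deformation is real and extends to a genuine formal deformation with $\phi_{i}=0$ for all $i\ge 2$. To identify the limit, I would exhibit an explicit change of basis realizing $(\mathfrak{g}_{4|2}^{1},[\ ,\ ]_{t})\cong\mathfrak{g}_{4|2}^{2}(\lambda(t))$ for a suitable $\lambda(t)$ with $\lambda(0)=0$. Concretely, the deformation perturbs the nilpotent action $\ad e_{1}$ on the odd part (where $[e_{1},e_{6}]=e_{5}$, $\ad e_{1}(e_{5})=0$) into a semisimple one with eigenvalues $\pm\lambda(t)$, so diagonalizing $\ad e_{1}$ on $\mathrm{span}\{e_{5},e_{6}\}$ — which, as in Theorem \ref{1}, introduces a factor of the form $\sqrt{t}$ and hence $\lambda(t)\propto\sqrt{t}$ — brings the brackets to the normal form $[e_{1},e_{5}]=\lambda e_{5}$, $[e_{1},e_{6}]=-\lambda e_{6}$, $[e_{5},e_{6}]=\lambda e_{4}$. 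Smoothness then follows from the classification fact recorded just above the statement, namely $\mathfrak{g}_{4|2}^{2}(\lambda_{1})\mathop{\simeq}\limits^{i}\mathfrak{g}_{4|2}^{2}(\lambda_{2})$ only when $\lambda_{1}=\lambda_{2}$, with the coincidence $\mathfrak{g}_{4|2}^{2}(\lambda)\cong\mathfrak{g}_{4|2}^{2}(-\lambda)$ accounting for the symmetric identifications: distinct nonzero values of the parameter give distinct $\lambda$ and hence pairwise non-isomorphic algebras near $\lambda=0$, which is precisely a smooth deformation to the family $\mathfrak{g}_{4|2}^{2}(\lambda)$.
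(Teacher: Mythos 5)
Your proposal follows essentially the same route as the paper's proof: compute $\mathrm{H}^{2}(\mathfrak{g}_{4|2}^{1},\mathfrak{g}_{4|2}^{1})_{\bar 0}$ (which the paper finds to be $3$-dimensional), use the metric condition (Lemma~\ref{c} plus invariance of $B$) to kill all coefficients except one, so that only the cocycle $f_2=e^{1,5}_6-e^{5,5}_4$ survives, verify $[f_2,f_2]=0$ so the infinitesimal deformation is real and integrates with $\phi_i=0$ for $i\geq 2$, and identify the result with $\mathfrak{g}_{4|2}^{2}(\sqrt{a_2t})$. Your structural prediction --- that the surviving cocycle turns the nilpotent action of $\ad e_1$ on $\mathrm{span}\{e_5,e_6\}$ into a semisimple one with eigenvalues $\pm\sqrt{a_2t}$, forcing $\lambda(t)\propto\sqrt{t}$ --- is exactly what the paper's explicit change of basis realizes.
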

\begin{proof}
The even part of the  2-cohomology space is  3-dimensional, spanned by the  representative  even cocycles:
$$f_1=e^{1,5}_5,\ f_2=e^{1,5}_6-e^{5,5}_4,\ f_3=e^{1,2}_2+e^{1,4}_4+\frac{1}{2}e^{5,6}_4.$$
If the cocycle $\sum_{i=1}^{3}a_if_i$ defines a metric infinitesimal deformation, then we have $a_1=a_3=0$. From $[f_2,f_2]=0$, the cocycle $a_2f_2$ ($a_2\neq0$) defines a metric and real infinitesimal deformation isomorphic to $\mathfrak{g}_{4|2}^{2}(\sqrt{a_2t})$ via the change of basis:
$$e'_1=e_1,\ e'_2=e_2,\ e'_3=e_3,\ e'_4=e_4,\ e'_5=-\frac{1}{2 \sqrt{a_2t}}e_5-\frac{1}{2}e_6,\ e'_6=e_5-\sqrt{a_2t}e_6.$$

\end{proof}
\begin{thm}\label{dis}
Metric deformations of the family $\mathfrak{g}_{4|2}^{2}(\lambda)$ $(\lambda\neq0)$ follow two different patterns:

(1) The generic element for $\lambda\neq \pm \frac{1}{2}$  has one metric deformation, which is a smooth  metric  deformation to the family $\mathfrak{g}_{4|2}^{2}(\lambda)$ around itself.

(2) The special element $\mathfrak{g}_{4|2}^{2}(\frac{1}{2})\cong\mathfrak{g}_{4|2}^{2}(-\frac{1}{2})$
has a smooth  metric deformation to the family $\mathfrak{g}_{4|2}^{2}(\lambda)$ around itself and it has jump  metric deformations to $\mathfrak{g}_{4|2}^{3}$ and $\mathfrak{osp}(1,2)\oplus \C_{1|0}$.
\end{thm}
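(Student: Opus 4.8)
The plan is to follow the scheme used in the preceding theorems: first compute $\mathrm{H}^{2}(\mathfrak{g}_{4|2}^{2}(\lambda),\mathfrak{g}_{4|2}^{2}(\lambda))_{\bar0}$, then cut it down to the metric directions and test integrability. The organizing tool is the grading by $\mathrm{ad}\,e_1$: since $[e_1,e_2]=e_2$, $[e_1,e_3]=-e_3$, $[e_1,e_5]=\lambda e_5$, $[e_1,e_6]=-\lambda e_6$ and $e_4$ is central, $e_1$ acts diagonally with weights $0,1,-1,0,\lambda,-\lambda$ on $e_1,\dots,e_6$. A cochain $e^{i,j}_{k}$ then carries the weight $\mathrm{wt}(e_k)-\mathrm{wt}(e_i)-\mathrm{wt}(e_j)$, and a standard averaging argument lets me represent every class by a weight-zero cocycle. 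For generic $\lambda$ the weight-zero even $2$-cochains span a small space whose cocycles, modulo coboundaries and after the metric constraint below, collapse to the single ``family direction'' $e^{1,5}_5-e^{1,6}_6+e^{5,6}_4$ (the tangent to $\lambda\mapsto\mathfrak{g}_{4|2}^{2}(\lambda)$). The arithmetic changes exactly at $\lambda=\tfrac12$: the relations $\mathrm{wt}(e_2)+\mathrm{wt}(e_6)=\mathrm{wt}(e_5)$, $2\,\mathrm{wt}(e_6)=\mathrm{wt}(e_3)$, and $\mathrm{wt}(e_3)+\mathrm{wt}(e_5)=\mathrm{wt}(e_6)$ all reduce to $\lambda=\tfrac12$, so the cochains $e^{2,6}_5$, $e^{6,6}_3$, $e^{3,5}_6$ become weight-zero and enlarge the cohomology. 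This is the structural reason the two patterns split precisely at $\lambda=\pm\tfrac12$, the sign being irrelevant by the stated isomorphism $\mathfrak{g}_{4|2}^{2}(\lambda)\cong\mathfrak{g}_{4|2}^{2}(-\lambda)$.

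Next I would impose the metric constraint on each surviving cocycle. By Lemma \ref{c}(3) the deformed even part must remain a $4$-dimensional metric Lie algebra, hence either stay $\mathfrak{b}$ or jump to $\mathfrak{sl}(2,\C)\oplus\C$; together with the invariance identities $B([x,y]_t,z)=B(x,[y,z]_t)$ of the deformed form (exactly as in the proof of Theorem \ref{1} and the $2|2$ cases), this kills the directions that cannot carry an invariant scalar product and fixes the admissible coefficients. For the family direction the resulting infinitesimal deformation is visibly real, $[\phi,\phi]=0$, so by the remark following Lemma \ref{Ma1} it integrates with no higher terms; a $t$-dependent rescaling of $e_5,e_6$ identifies $(\mathfrak{g}_{4|2}^{2}(\lambda),[\ ,\ ]_t)$ with $\mathfrak{g}_{4|2}^{2}(\lambda')$ for a $\lambda'=\lambda'(\lambda,t)$ that varies nontrivially with $t$. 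Hence every $\lambda\neq0$ admits a smooth metric deformation within the family, giving part (1) in full and the first half of part (2).

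For the two exceptional directions at $\lambda=\tfrac12$ the essential mechanism is an obstruction that I expect to isolate cleanly. The even-part cochain $e^{2,3}_1$, which deforms $\mathfrak{b}$ toward $\mathfrak{sl}(2,\C)\oplus\C$ along the classical diamond direction of \cite{F-P}, is a cocycle of the even subalgebra but is \emph{not} a cocycle of the full superalgebra: computing $\mathrm{d}(e^{2,3}_1)(e_2,e_3,e_5)$ produces the nonzero term $\pm\lambda e_5$ coming from the action of $e_1$ on the odd part. This obstruction can be cancelled only by adding the odd-sector cochains $e^{2,6}_5$ and $e^{3,5}_6$, which are weight-zero precisely when $\lambda=\tfrac12$; at that value the odd pair $\langle e_5,e_6\rangle$ becomes the spin-$\tfrac12$ module of the emerging $\mathfrak{sl}(2,\C)$, and the completed cocycle assembles, after verifying its obstruction vanishes via Lemma \ref{Ma1}, into $\mathfrak{osp}(1,2)\oplus\C_{1|0}$ for every $t\neq0$ — a jump deformation. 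Independently, the combination built from $e^{2,6}_5$ and $e^{6,6}_3$ leaves the even part equal to $\mathfrak{b}$ and turns on exactly the brackets $[e_2,e_6]=e_5$, $[e_6,e_6]=e_3$ distinguishing $\mathfrak{g}_{4|2}^{3}$ from $\mathfrak{g}_{4|2}^{2}(\tfrac12)$; a further rescaling exhibits it as a jump deformation to $\mathfrak{g}_{4|2}^{3}$. These two directions are genuinely distinct since one changes the even part and the other does not.

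I expect the main obstacle to be the special point $\lambda=\tfrac12$: one must correctly enumerate the enlarged weight-zero cohomology, separate the genuinely inequivalent metric classes there from coboundaries and from the family direction, and — the delicate step — write down the explicit change of basis realizing the isomorphism onto $\mathfrak{osp}(1,2)\oplus\C_{1|0}$, since this deformation simultaneously promotes the solvable $\mathfrak{b}$ to the reductive $\mathfrak{sl}(2,\C)\oplus\C$ and reorganizes the odd brackets into the $\mathfrak{osp}$ form. Confirming that both exceptional deformations have constant isomorphism type for all small $t\neq0$ (hence are jumps), while the family direction genuinely varies $\lambda$ (hence is smooth), is where the remaining computational care will go.
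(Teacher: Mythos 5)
Your weight-space setup (grading by $\mathrm{ad}\,e_1$, with the extra weight-zero cochains $e^{2,6}_5$, $e^{6,6}_3$, $e^{3,5}_6$, $e^{5,5}_2$ appearing exactly at $\lambda=\pm\frac{1}{2}$) is sound and matches the paper's computed cocycle bases; your treatment of the generic case, of the in-family smooth deformation, and of the jump to $\mathfrak{g}_{4|2}^{3}$ (the paper's classes $f_3=e^{2,6}_5+e^{6,6}_3$ and $f_4=e^{3,5}_6-e^{5,5}_2$ each produce that jump) follows the same route as the paper. The genuine gap is your mechanism for the jump to $\mathfrak{osp}(1,2)\oplus\C_{1|0}$. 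You propose a \emph{first-order} cocycle obtained by completing $e^{2,3}_1$ with the odd-sector cochains $e^{2,6}_5$ and $e^{3,5}_6$, asserting that these cancel the obstruction $\mathrm{d}(e^{2,3}_1)(e_2,e_3,e_5)=\pm\lambda e_5$. They cannot: in $\mathfrak{g}_{4|2}^{2}(\lambda)$ one has $[e_2,e_3]=e_4$, $[e_2,e_5]=[e_3,e_5]=[e_2,e_6]=0$, and evaluating the coboundary formula on the triple $(e_2,e_3,e_5)$ gives
$$\mathrm{d}(e^{2,6}_5)(e_2,e_3,e_5)=\mathrm{d}(e^{3,5}_6)(e_2,e_3,e_5)=0,$$
so no linear combination $e^{2,3}_1+c_1e^{2,6}_5+c_2e^{3,5}_6$ is a cocycle. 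Consistently, the paper's four-dimensional $\mathrm{H}^{2}_{\bar{0}}$ at $\lambda=\frac{1}{2}$ is spanned by $f_1,f_2,f_3,f_4$, none of which involves $e^{2,3}_1$: the diamond-to-$\mathfrak{sl}(2,\C)\oplus\C$ direction is simply not available at first order.

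The paper's actual argument is structurally different, and it is the one delicate point of the whole theorem. The $\mathfrak{osp}$-jump has infinitesimal part $\phi_1=a_3f_3+a_4f_4$ with $a_3a_4\neq0$ (the \emph{mixed} odd-sector case, which your case analysis does not consider), and this $\phi_1$ is \emph{not} real: $[\phi_1,\phi_1]\neq 0$. It integrates only after adding the second-order term $\phi_2=2a_3a_4e^{2,3}_1+a_3a_4e^{5,6}_1$, which satisfies $\mathrm{d}(\phi_2)=-\frac{1}{2}[\phi_1,\phi_1]$ and $[\phi_1,\phi_2]=[\phi_2,\phi_2]=0$, so that $[\ ,\ ]+t\phi_1+t^{2}\phi_2$ is a Lie superalgebra isomorphic to $\mathfrak{osp}(1,2)\oplus\C_{1|0}$ for $t\neq0$. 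In other words, the cochain $e^{2,3}_1$ you want at order $t$ enters at order $t^{2}$, forced by Lemma \ref{Ma1} as the correction to the quadratic obstruction of the mixed cocycle. Your plan assumes every metric direction here is a real, first-order-integrable cocycle (you invoke Lemma \ref{Ma1} only to "verify the obstruction vanishes"), and under that assumption the $\mathfrak{osp}(1,2)\oplus\C_{1|0}$ deformation would never be found — this is precisely the unique case in dimension $4|2$, as the paper's Conclusion records, where a metric infinitesimal deformation fails to be real.
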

\begin{proof}
Note that the case $\lambda=0$ is generally excluded from this family, because $\mathfrak{g}_{4|2}^{2}(0)=\mathfrak{b}\bigoplus \C_{0|2}$. Beside that, in the cases $\lambda=\pm\frac{1}{2}$, the cohomology and deformation patterns are not generic.

(1)
The even part of the  2-cohomology space is  2-dimensional, spanned by the  representative  even cocycles:
$$f_1=e^{1,5}_5-e^{1,6}_6,\ f_2=e^{1,2}_2+e^{1,4}_4+e^{1,6}_6.$$
If the cocycle $\sum_{i=1}^{2}a_if_i$ defines a metric infinitesimal deformation, then we have
$a_2=0$. From $[f_1,f_1]=0$, the cocycle $a_1f_1$ ($a_1\neq 0$) defines a metric and real infinitesimal deformation isomorphic to $\mathfrak{g}_{4|2}^{2}(\lambda+a_1t)$ via the change of basis:
$$e'_1=e_1,\ e'_2=\lambda e_2 ,\ e'_3=\frac{1}{\lambda+a_1t}e_3 ,\ e'_4=\frac{\lambda}{\lambda+a_1t}e_4,\ e'_5=e_5 ,\ e'_6=e_6.$$

(2)
Because of the isomorphism, it is sufficient to consider $\mathfrak{g}_{4|2}^{2}(\frac{1}{2})$.
The even part of the  2-cohomology space is  4-dimensional, spanned by the  representative  even cocycles:.
 $$f_1=e^{1,5}_5-e^{1,6}_6,\ f_2=e^{1,2}_2+e^{1,4}_4+e^{1,6}_6,\ f_{3}=e^{2,6}_5+e^{6,6}_3,\  f_{4}=e^{3,5}_6-e^{5,5}_2.$$
 If the cocycle $\sum_{i=1}^{4}a_if_i$ defines a metric infinitesimal deformation, then we have that
$a_i=0$, $i\neq 1$, or $a_1=a_2=0$. We determine nontrivial metric deformation in the following cases:

\emph{Case 1}: $a_i=0$, $i\neq 1$. The cocycle $a_1f_1$ ($a_1\neq 0$) leads a smooth  metric  deformation around itself (see (1)).

\emph{Case 2}: $a_1=a_2=a_3=0$. The cocycle $a_4f_4$ ($a_4\neq 0$) defines a metric and real infinitesimal deformation isomorphic to $\mathfrak{g}_{4|2}^{3}$ via the change of basis:
$$e'_1=-e_1,\ e'_2=e_3,\ e'_3=-a_4te_2,\ e'_4=a_4te_4,\ e'_5=a_4te_6,\ e'_6=e_5.$$

\emph{Case 3}: $a_1=a_2=a_4=0$. The cocycle $a_3f_3$ ($a_3\neq 0$) defines a metric and real infinitesimal deformation isomorphic to $\mathfrak{g}_{4|2}^{3}$ via the change of basis:
$$e'_1=e_1,\ e'_2=e_2,\ e'_3=a_3te_3,\ e'_4=a_3te_4,\ e'_5=a_3te_5,\ e'_6=e_6.$$

\emph{Case 4}: $a_1=a_2=0,a_3a_4\neq0$. The infinitesimal deformation, defined by the cocycle $a_3f_3+a_4f_4$, is not real, since $[a_3f_3+a_4f_4,a_3f_3+a_4f_4]\neq 0$. However, it can be extended to a metric deformation. An example can be given by taking
$$\phi_1=a_3f_3+a_4f_4,\ \phi_2=2a_3a_4e^{2,3}_1+a_3a_4e^{5,6}_1.$$
Then we have
$$\mathrm{d}(\phi_2)=-\frac{1}{2}[\phi_1,\phi_1],\ [\phi_1,\phi_2]=[\phi_2,\phi_2]=0.$$
Moreover,
the bracket
$$[\ ,\ ]^{1}_t=[\ ,\ ]+t\phi_1(\ ,\ )+t^{2}\phi_2(\ ,\ )$$
defines a metric Lie superalgebra isomorphic to $\mathfrak{osp}(1,2)\oplus \C_{1|0}$ via the change of basis:
\begin{eqnarray*}
&&e'_1=\frac{1}{a_3t}e_2,\ e'_2= \frac{1}{a_4t}e_3,\ e'_3=\frac{1}{a_3a_4t^{2}}e_4+2e_1,\ e'_4=\frac{1}{a_3a_4}e_4, \\
&&e'_5=\frac{1}{t}\sqrt{-\frac{1}{2a_3a_4}}e_5,\ e'_6=-\frac{1}{t}\sqrt{-\frac{1}{2a_3a_4}}e_6.
\end{eqnarray*}
\end{proof}

\begin{thm}
The algebra $\mathfrak{g}_{4|2}^{3}$ has only one metric deformation, which is a jump  deformation to $\mathfrak{osp}(1,2)\bigoplus \C_{1|0}$.
\end{thm}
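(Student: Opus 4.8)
The plan is to follow the same scheme used for the preceding theorems: first determine $\mathrm{H}^{2}(\mathfrak{g}_{4|2}^{3},\mathfrak{g}_{4|2}^{3})_{\bar{0}}$ by a direct computation with the coboundary operator $\mathrm{d}$, producing an explicit basis of representative even $2$-cocycles $f_{1},\dots,f_{k}$ written in the $e^{i,j}_{k}$ notation. I would then write down the general even infinitesimal deformation $\phi_{1}=\sum_{i}a_{i}f_{i}$ and impose the metric condition. The key structural input is that the even part of $\mathfrak{g}_{4|2}^{3}$ is the diamond Lie algebra $\mathfrak{b}$ (brackets $[e_{1},e_{2}]=e_{2},\,[e_{1},e_{3}]=-e_{3},\,[e_{2},e_{3}]=e_{4}$), so by Lemma \ref{c}~(3) any metric deformation keeps the even part $4$-dimensional and metric, hence equal to $\mathfrak{b}$ or to $\mathfrak{sl}(2,\mathbb{C})\oplus\C$.

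To cut down the cocycle parameters I would use the invariance of the deformed scalar product exactly as in Theorems \ref{1} and \ref{dis}: for each pair of basis vectors the identity $B([x,y]_{t},z)=B(x,[y,z]_{t})$ together with Proposition \ref{dim} forces several of the $a_{i}$ to vanish, leaving, up to rescaling $t$, a single admissible direction $\phi_{1}$. I expect this surviving direction to be the one that, on the even part, carries $\mathfrak{b}$ toward $\mathfrak{sl}(2,\mathbb{C})\oplus\C$, mirroring the diamond-algebra deformation of Fialowski--Penkava \cite{F-P}.

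Next I would test reality of $\phi_{1}$ by computing $[\phi_{1},\phi_{1}]$. By analogy with Case~4 of Theorem \ref{dis}, where $\mathfrak{g}_{4|2}^{2}(\tfrac12)$ already jumps to $\mathfrak{osp}(1,2)\oplus\C_{1|0}$, I expect $[\phi_{1},\phi_{1}]\neq 0$, so the deformation is not real and must be prolonged. Using Lemma \ref{Ma1}, I would solve $\mathrm{d}(\phi_{2})=-\tfrac12[\phi_{1},\phi_{1}]$ with a second-order term of the form $\phi_{2}=c\,e^{2,3}_{1}+c'\,e^{5,6}_{1}$ (the cochains that convert $[e_{2},e_{3}]=e_{4}$ into an $\mathfrak{sl}(2,\mathbb{C})$-type bracket), and verify that the remaining deformation equations $[\phi_{1},\phi_{2}]=[\phi_{2},\phi_{2}]=0$ hold, so that $[\ ,\ ]_{t}=[\ ,\ ]+t\phi_{1}+t^{2}\phi_{2}$ is an honest metric deformation. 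Finally I would exhibit an explicit $t$-dependent change of basis identifying $(\mathfrak{g}_{4|2}^{3},[\ ,\ ]_{t})$ with $\mathfrak{osp}(1,2)\oplus\C_{1|0}$ for every nonzero $t$; since $\mathfrak{osp}(1,2)$ has nondegenerate Killing form and is therefore rigid by Theorems \ref{r1} and \ref{Leites}, the deformed algebra does not vary with $t$, so the deformation is indeed a jump deformation.

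The main obstacle I anticipate is twofold: organizing the cohomology computation so that the metric constraints isolate a genuinely unique direction (rather than a multi-parameter family whose equivalences must then be sorted out), and guessing the correct prolongation $\phi_{2}$ together with the explicit change of basis realizing the isomorphism onto $\mathfrak{osp}(1,2)\oplus\C_{1|0}$. The latter is the delicate step, since one must simultaneously fold the deformed even part onto $\mathfrak{sl}(2,\mathbb{C})\oplus\C$ and match the odd brackets and the scalar product; checking that all higher obstructions vanish, so that no $\phi_{3},\phi_{4},\dots$ are needed, is the computational heart of the argument.
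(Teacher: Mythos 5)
Your overall scheme --- compute the even $2$-cohomology, impose the metric constraints via invariance of $B$ and Proposition \ref{dim} to isolate a single direction, test reality, and finish with an explicit $t$-dependent change of basis --- is exactly the paper's, and your guess that the surviving direction is the one deforming the diamond even part $\mathfrak{b}$ toward $\mathfrak{sl}(2,\mathbb{C})\oplus\C$ is correct. But your central prediction about where the work lies is wrong. The paper's computation gives $\mathrm{H}^{2}(\mathfrak{g}_{4|2}^{3},\mathfrak{g}_{4|2}^{3})_{\bar{0}}$ of dimension $2$, spanned by $f_1=e^{1,2}_2+e^{1,4}_4+e^{1,5}_5$ and $f_2=e^{2,3}_{1}+\frac{1}{2}e^{3,5}_6-\frac{1}{2}e^{5,5}_2+\frac{1}{2}e^{5,6}_1$; the metric condition forces $a_1=0$, and the surviving cocycle satisfies $[f_2,f_2]=0$. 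Hence the metric infinitesimal deformation is \emph{real}: the terms $e^{2,3}_1$ and $e^{5,6}_1$, which you plan to generate as a second-order correction $\phi_2$, already sit inside the first-order cocycle $f_2$ itself; the deformed bracket is simply $[\ ,\ ]+ta_2f_2$, and there is no equation $\mathrm{d}(\phi_2)=-\frac{1}{2}[\phi_1,\phi_1]$ to solve and no higher obstructions to check. Your analogy with Case 4 of Theorem \ref{dis} points in exactly the wrong direction: $\mathfrak{g}_{4|2}^{2}(\frac{1}{2})$ is the \emph{only} $4|2$-dimensional case whose metric infinitesimal deformation fails to be real (as the paper's Conclusion records), while $\mathfrak{g}_{4|2}^{3}$ follows the unobstructed pattern of Theorems \ref{1} and the other cases.

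Because your plan does run the reality test $[\phi_1,\phi_1]$ before prolonging, carrying it out honestly would return zero and collapse what you call the ``computational heart'' of the argument to nothing; so the plan is not logically broken, only misaimed. What genuinely remains after the cohomology computation is the part you correctly flag as delicate: the explicit isomorphism. The paper's change of basis is $e'_1=-e_2$, $e'_2=-\frac{2}{a_2t}e_3$, $e'_3=\frac{2}{a_2t}e_4+2e_{1}$, $e'_4=e_4$, $e'_5=\frac{1}{\sqrt{a_2t}}e_5$, $e'_6=\frac{1}{\sqrt{a_2t}}e_6$, which identifies $(\mathfrak{g}_{4|2}^{3},[\ ,\ ]_t)$ with $\mathfrak{osp}(1,2)\oplus\C_{1|0}$ for every $t\neq 0$ (note $e'_3$ mixes $e_4$ with $e_1$, simultaneously folding the even part onto $\mathfrak{sl}(2,\mathbb{C})\oplus\C$ and splitting off the central factor); this uniformity in $t$ is what makes the deformation a jump, without any appeal to rigidity of the target.
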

\begin{proof}
The even part of the  2-cohomology space is  2-dimensional, spanned by the  representative  even cocycles:
$$f_1=e^{1,2}_2+e^{1,4}_4+e^{1,5}_5,\   f_2=e^{2,3}_{1}+\frac{1}{2}e^{3,5}_6-\frac{1}{2}e^{5,5}_2+\frac{1}{2}e^{5,6}_1.$$
If the cocycle $\sum_{i=1}^{2}a_if_i$ defines a metric infinitesimal deformation, then we have
$a_1=0$. From $[f_2,f_2]=0$, the cocycle $a_2f_2$ ($a_2\neq 0$) defines a metric and real infinitesimal deformation isomorphic to $\mathfrak{osp}(1,2)\bigoplus \C_{1|0}$ via the change of basis:
$$e'_1=-e_2,\ e'_2=-\frac{2}{a_2t} e_3,\ e'_3=\frac{2}{a_2t}e_4+2e_{1},\ e'_4=e_4,\ e'_5=\frac{1}{\sqrt[]{a_2t}}e_5,\ e'_6=\frac{1}{\sqrt[]{a_2t}}e_6.$$
\end{proof}
\begin{conclusion}
In dimension $4|2$, except for $\mathfrak{g}_{4|2}^{2}(\frac{1}{2})$, every metric infinitesimal deformation is real.
We summarize the  metric deformation picture of $4|2$-dimensional indecomposable metric Lie superalgebras in the table below.
$$\begin{array}{llll}
  \mathrm{Algebra} & \mathrm{dim}\ \mathrm{H}_{\bar{0}}^{2}&\mathrm{Jump\ deformation} & \mathrm{Smooth\ deformation} \\
     \cline{1-4}
   \mathfrak{g}_{4|2}^{1} &  3 &  -  &\mathfrak{g}_{4|2}^{2}(0)=\mathfrak{b}\bigoplus \C_{0|2}\\
   \mathfrak{g}_{4|2}^{2}(\lambda), \lambda\neq 0,\pm \frac{1}{2}&2  &-& \mathfrak{g}_{4|2}^{2}(\lambda) \\
    \mathfrak{g}_{4|2}^{2}(\frac{1}{2})\cong\mathfrak{g}_{4|2}^{2}(-\frac{1}{2})& 4 & \mathfrak{g}_{4|2}^{3},\mathfrak{osp}(1,2)\bigoplus\C_{1|0} &\mathfrak{g}_{4|2}^{2}(\pm\frac{1}{2})\\
  \mathfrak{g}_{4|2}^{3}  & 2 & \mathfrak{osp}(1,2)\bigoplus\C_{1|0}   &-\\
 \cline{1-4}
 \end{array}$$
In particular, we get the picture of jump metric deformations as follows:
$$\xymatrix{
       \mathfrak{g}_{4|2}^{2}(\pm\frac{1}{2}) \ar[d]^{}          &             \\
  \mathfrak{g}_{4|2}^{3}  \ar[d]^{} &   \\
   \mathfrak{osp}(1,2)\bigoplus\C_{1|0}   &           }$$
where the down arrows show jump metric deformations.
\end{conclusion}

\subsection{$2|4$-dimensional metric Lie superalgebras.}There are the following indecomposable metric Lie superalgebras:
$\mathfrak{g}_{2|4}^{1}$, $\mathfrak{g}_{2|4}^{2}$, $\mathfrak{g}_{2|4}^{3}(\lambda)$ and $\mathfrak{g}_{2|4}^{4}$, with nontrivial brackets \cite{class}:
$$
\begin{tabular}{lrl}
  (1) & $\mathfrak{g}_{2|4}^{1}$ : & $[e_2,e_4]=e_3$,\ $[e_2,e_5]=-e_6$,\ $[e_4,e_5]=e_1$,\\
  (2) & $\mathfrak{g}_{2|4}^{2}$ : & $[e_2,e_4]=e_4$,\ $[e_2,e_5]=e_3$,\ $[e_2,e_6]=-e_6$,\ $[e_5,e_5]=[e_4,e_6]=e_1$, \\
  (3) & $\mathfrak{g}_{2|4}^{3}(\lambda)$  $(\lambda\neq 0)$ : & $[e_2,e_3]=e_3$,\ $[e_2,e_4]=\lambda e_4$,\ $[e_2,e_5]=-e_5$, \\
   &    &  $[e_2,e_6]=-\lambda e_6$,\ $[e_3,e_5]=e_1$,\ $[e_4,e_6]=\lambda e_1$, \\
  (4) & $\mathfrak{g}_{2|4}^{4}$ : &$[e_2,e_3]=e_3$,\ $[e_2,e_4]=e_3+e_4$,\ $[e_2,e_5]=-e_5-e_6$, \\
  &   &$[e_2,e_6]=-e_6$,\ $[e_3,e_5]=[e_4,e_5]=[e_4,e_6]=e_1$,\\
   (5) & $\mathfrak{g}_{2|4}^{5}$ : &$[e_2,e_3]=e_5$,\ $[e_2,e_4]=e_3$,\ $[e_2,e_5]=-e_6$, \\
  &   &$[e_3,e_3]=-e_1$,\ $[e_4,e_5]=e_1$.
\end{tabular}
$$
Here $\mathfrak{g}_{2|4}^{3}(\lambda)\mathop{\simeq}\limits^{i}\mathfrak{g}_{2|4}^{3}(-\lambda)\mathop{\simeq}\limits^{i}\mathfrak{g}_{2|4}^{3}(\lambda^{-1})$.
When  computing   metric deformations, we found a new class  metric Lie superalgebra $\mathfrak{g}_{2|4}^{5}$ (which is missing from the classification in \cite{class}).
Two invariant scalar products on this algebra can be  given by
$$\left(
  \begin{array}{cccccc}
    0 & 1 &  &  &  &  \\
    1 & 0 &  &  &  &  \\
     &  & 0 & 0 & 1 & 0 \\
     &  & 0 & 0 & 0 & 1 \\
     &  & -1 & 0 & 0 & 0 \\
     &  & 0 & -1 & 0 & 0 \\
  \end{array}
\right),
\qquad
\left(
  \begin{array}{cccccc}
    0 & 1 &  &  &  &  \\
    1 & 1 &  &  &  &  \\
     &  & 0 & 0 & 1 & 0 \\
     &  & 0 & 0 & 0 & 1 \\
     &  & -1 & 0 & 0 & 0 \\
     &  & 0 & -1 & 0 & 0 \\
  \end{array}
\right).$$
In addition, the algebra $\mathfrak{g}_{2|4}^{4}$ is also referred to $\mathfrak{g}_{6,7}^{s}$ in \cite{class}, although there is a mistake of a bracket in that paper.
\begin{thm}
The algebra  $\mathfrak{g}_{2|4}^{1}$ has jump metric deformations to $\mathfrak{g}_{2|2}^{2}\bigoplus \C_{0|2}$, $\mathfrak{g}_{2|4}^{2}$, $\mathfrak{g}_{2|4}^{3}(1)$, $\mathfrak{g}_{2|4}^{3}(\sqrt{-1})$, $\mathfrak{g}_{2|4}^{4}$, $\mathfrak{g}_{2|4}^{5}$ and it has smooth metric deformations to the family $\mathfrak{g}_{2|4}^{3}(\lambda)$ around $\lambda=0,1,\sqrt{-1}$.
\end{thm}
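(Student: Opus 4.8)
The plan is to follow the same three-stage strategy used for the lower-dimensional cases: first pin down the even part of the second cohomology, then cut it down to the classes that yield \emph{metric} infinitesimal deformations, and finally integrate each admissible infinitesimal direction to a genuine formal deformation and identify the resulting algebra. First I would compute $\mathrm{H}^{2}(\mathfrak{g}_{2|4}^{1},\mathfrak{g}_{2|4}^{1})_{\bar{0}}$ directly from the coboundary formula, solving $\mathrm{d}(f)=0$ for even $2$-cochains and factoring out $\mathrm{B}^{2}$. Given the six distinct jump targets and the one-parameter smooth family in the statement, I expect this space to be sizeable, with an explicit basis $f_{1},\dots,f_{k}$ written in the $e^{i,j}_{k}$ notation.

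Second, I would impose the metric constraint on a general class $\sum_{i}a_{i}f_{i}$. Because $\dim\mathfrak{g}_{\bar{0}}=2$, Lemma \ref{c}(1) forces the even part of any metric deformation to stay abelian $\cong\C_{2|0}$, which kills all cocycle components touching the even--even bracket; the remaining invariance relations $B([x,y]_{t},z)=B(x,[y,z]_{t})$ coming from Proposition \ref{dim} impose further linear conditions on the $a_{i}$. This reduces the metric infinitesimal deformations to an explicit subvariety of the parameter space.

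Third, for each admissible direction I would test whether the infinitesimal part is \emph{real} by computing the square bracket $[\phi_{1},\phi_{1}]$ in the graded Lie superalgebra of cochains. When $[\phi_{1},\phi_{1}]=0$ the remark following Lemma \ref{Ma1} produces a deformation outright, and I would exhibit the target $\mathfrak{g}_{2|2}^{2}\oplus\C_{0|2}$, $\mathfrak{g}_{2|4}^{2}$, $\mathfrak{g}_{2|4}^{3}(1)$, $\mathfrak{g}_{2|4}^{3}(\sqrt{-1})$, $\mathfrak{g}_{2|4}^{4}$ or $\mathfrak{g}_{2|4}^{5}$ by an explicit $t$-dependent change of basis, exactly as in the $4|2$ cases; whether a given class is jump or smooth is then read off from whether the isomorphism type is constant in $t$ or genuinely moves along the family $\mathfrak{g}_{2|4}^{3}(\lambda)$ near $\lambda=0,1,\sqrt{-1}$. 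For a non-real direction I would instead solve the next Maurer--Cartan obstruction $\mathrm{d}(\phi_{2})=-\tfrac{1}{2}[\phi_{1},\phi_{1}]$ of Lemma \ref{Ma1}, choosing $\phi_{2}$ so that the extended bracket remains metric and $[\phi_{1},\phi_{2}]=[\phi_{2},\phi_{2}]=0$.

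The hardest part will be the bookkeeping in this last stage: with six jump targets, one of which ($\mathfrak{g}_{2|4}^{5}$) is \emph{new} and absent from the classification of \cite{class} and so must itself be verified to be metric via one of its displayed invariant scalar products, the case analysis splitting the admissible parameter region into the directions producing each target is delicate. For the non-real directions one must further verify that the chosen $\phi_{2}$ (and any higher correction) both solves the obstruction and preserves non-degeneracy of the deformed invariant form, and guessing the correct change of basis realizing each isomorphism—especially the ones landing on $\mathfrak{g}_{2|4}^{3}(\lambda)$ for the three special values of $\lambda$—is where the genuine computational effort lies.
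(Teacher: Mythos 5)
Your proposal follows essentially the same route as the paper: compute the even part of the second cohomology (the paper finds it is $9$-dimensional with an explicit basis), cut it down by the metric constraints coming from Lemma \ref{c}(1) and invariance, verify reality via $[\phi_1,\phi_1]=0$, and identify each target by an explicit $t$-dependent change of basis as in the proof of Theorem \ref{dis}. The only difference is that your Maurer--Cartan contingency for non-real directions turns out to be unnecessary here, since the paper shows that for $\mathfrak{g}_{2|4}^{1}$ every metric infinitesimal deformation is already real.
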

\begin{proof}
The even part of the  2-cohomology space is  9-dimensional, spanned by the  representative  even cocycles:
\begin{eqnarray*}
&&f_1=e^{2,3}_3,\ f_2=e^{2,3}_6,\ f_3=e^{2,6}_3,\ f_4=e^{2,6}_6,\ f_5=e^{1,2}_1+e^{4,6}_1,\ f_6=e^{2,3}_5-e^{3,3}_1, \\
&&f_7=e^{2,4}_4+e^{4,6}_1,\ f_8=e^{2,6}_4+e^{6,6}_1,\ f_9=e^{2,3}_4-e^{2,6}_5+e^{3,6}_1.
\end{eqnarray*}
If the cocycle $\sum_{i=1}^{9}a_if_i$ defines a metric infinitesimal deformation, then we have
$a_1=a_2=a_3=a_5=0, a_7=-a_4$ or $a_5=a_7=0, a_4=-a_1,a_6=a_2x,a_8=a_3x,a_9=a_1x$ for some $x\neq 0$.
From the brackets
$$[a_4(f_4-f_7)+a_6f_6+a_8f_8+a_9f_9,a_4(f_4-f_7)+a_6f_6+a_8f_8+a_9f_9]=0, $$
$$[a_1(f_1-f_4+xf_9)+a_2(f_2+xf_6)+a_3(f_3+xf_8),a_1(f_1-f_4+xf_9)+a_2(f_2+xf_6)+a_3(f_3+xf_8)]=0,
$$
all metric infinitesimal deformations are real infinitesimal deformations. The proof is complete by a similar discussion in the proof of Theorem \ref{dis}.
\end{proof}

\begin{thm}
The algebra $\mathfrak{g}_{2|4}^{2}$ has only one metric deformation, which is a smooth metric deformation to the family $\mathfrak{g}_{2|4}^{3}(\lambda)$ around $\lambda=0$.
\end{thm}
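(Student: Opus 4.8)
The strategy mirrors the proofs of the preceding theorems in this subsection. The plan is to determine the even part of the second cohomology $\mathrm{H}^2(\mathfrak{g}_{2|4}^2,\mathfrak{g}_{2|4}^2)_{\bar 0}$ by direct computation, write down a spanning set of representative even cocycles $f_1,\dots,f_r$, and then impose the two independent constraints that cut the cohomology down to the \emph{metric} infinitesimal deformations: (i) the requirement from Lemma~\ref{c}~(1) that the deformed even part remain isomorphic to $\C_{2|0}$, and (ii) the invariance conditions $B([x,y]_t,z)=B(x,[y,z]_t)$ forced by Proposition~\ref{dim}. Together these should kill all but one free parameter $a_i$, isolating a single one-dimensional family of metric infinitesimal deformations $a f$.

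First I would set up the cohomology computation on the basis $\{e_1,e_2\mid e_3,e_4,e_5,e_6\}$ using the brackets $[e_2,e_4]=e_4$, $[e_2,e_5]=e_3$, $[e_2,e_6]=-e_6$, $[e_5,e_5]=[e_4,e_6]=e_1$, and the coboundary operator $\mathrm{d}$ from Section~2. This is the bookkeeping-heavy step: one solves $\mathrm{d} f=0$ modulo $\mathrm{B}^2$ to produce the list of $f_i$. Next I would apply the two metric constraints. The even-part constraint forces the coefficients of any cocycle that alters the $\mathfrak{g}_{\bar 0}$-structure to vanish, exactly as $a_1=a_3=0$ was obtained for $\mathfrak{g}_{4|2}^1$; the invariance-of-$B$ constraint, tested on suitable triples as in $B([e_2,e_3]_t,e_4)=B(e_2,[e_3,e_4]_t)$, eliminates the remaining unwanted parameters. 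I would then verify that the surviving cocycle $f$ is \emph{real}, i.e. $[f,f]=0$, so that $\phi_i=0$ for $i\ge 2$ already gives a bona fide formal deformation by Lemma~\ref{Ma1} with no higher obstruction.

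The final step is to identify the resulting one-parameter family. The claim is that it is a \emph{smooth} deformation onto $\mathfrak{g}_{2|4}^3(\lambda)$ near $\lambda=0$. I would exhibit an explicit change of basis $e_i'=e_i'(t)$ (typically involving factors $\sqrt{a t}$ or $1/\sqrt{a t}$, as in the earlier proofs) carrying $(\mathfrak{g}_{2|4}^2,[\ ,\ ]_t)$ isometrically to $\mathfrak{g}_{2|4}^3$ with deformation parameter tending to $0$, and confirm smoothness by checking that distinct nonzero $t$ yield non-isomorphic members of the family.

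The main obstacle I expect is twofold. The computational obstacle is the $r$-dimensional cohomology calculation itself, since $\mathfrak{g}_{2|4}^2$ is six-dimensional and $\wedge^2$ of a superspace carries many components, so producing a correct and \emph{complete} list of representative cocycles requires care. The conceptual obstacle is constructing the explicit isometry realizing the target as $\mathfrak{g}_{2|4}^3(\lambda)$ at $\lambda\to 0$: one must guess the right $t$-dependent basis change and simultaneously match both the bracket relations and the invariant scalar product, which is the genuinely nontrivial part of pinning down that the deformation lands on the $\mathfrak{g}_{2|4}^3$ family rather than on some other algebra.
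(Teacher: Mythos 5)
Your plan follows the paper's proof essentially step for step: the paper computes the even 2-cohomology to be $3$-dimensional (spanned by $f_1=e^{2,3}_3$, $f_2=e^{2,3}_5-e^{3,3}_1$, $f_3=e^{1,2}_1-e^{2,6}_6-\tfrac{1}{2}e^{3,5}_1$), uses the metric constraints to force $a_1=a_3=0$, notes $[f_2,f_2]=0$ so the surviving cocycle is real and unobstructed, and then exhibits a $\sqrt{a_2t}$-type change of basis identifying the deformed algebra with $\mathfrak{g}_{2|4}^{3}(\sqrt{a_2t})$, exactly the structure you propose. The only content you leave unexecuted is the explicit cocycle list and basis change, which is computation rather than a difference in method.
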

\begin{proof}
The  even part of the  2-cohomology space is  3-dimensional, spanned by the  representative  even cocycles:
$$f_1=e^{2,3}_3,\ f_2=e^{2,3}_5-e^{3,3}_1,\ f_3=e^{1,2}_1-e^{2,6}_6-\frac{1}{2}e^{3,5}_1.$$
If the cocycle $\sum_{i=1}^{3}a_if_i$ defines a metric infinitesimal deformation, then we have
$a_1=a_3=0$. From $[f_2,f_2]=0$, the cocycle $a_2f_2$ ($a_2\neq 0$) defines a metric and real infinitesimal deformation isomorphic to $\mathfrak{g}_{2|4}^{3}\left(\frac{1}{\sqrt{a_2t}}\right)\mathop{\simeq}\limits^{i} \mathfrak{g}_{2|4}^{3}(\sqrt{a_2t})$ via the change of basis:
$$e'_1=-2a_2te_1,\ e'_2=\frac{1}{\sqrt{a_2t}}e_2,\ e'_3=e_3+\sqrt{a_2t}e_5,\ e'_4=-2\sqrt[]{a_2t}e_4,\ e'_5=e_3-\sqrt{a_2t}e_5,\ e'_6=e_6.$$
\end{proof}
\begin{thm}
Metric deformations of the family $\mathfrak{g}_{2|4}^{3}(\lambda)$ $(\lambda\neq0)$ are the following:

(1) The generic element for $\lambda\neq \pm 1$   has only one metric deformation, which is a smooth deformation to the family $\mathfrak{g}_{2|4}^{3}(\lambda)$ around itself.

(2) The special element $\mathfrak{g}_{2|4}^{3}(1)\mathop{\simeq}\limits^{i}\mathfrak{g}_{2|4}^{3}(-1)$
has a smooth metric deformation to the family $\mathfrak{g}_{2|4}^{3}(\lambda)$ around itself and it has a jump metric deformation to $\mathfrak{g}_{2|4}^{4}$.
\end{thm}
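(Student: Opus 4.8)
The plan is to mirror the proof of Theorem \ref{dis}, separating the generic family from the resonant value $\lambda=\pm 1$. The source of the dichotomy is the action of $\mathrm{ad}(e_2)$ on the odd part, which has weights $1,\lambda,-1,-\lambda$ on $e_3,e_4,e_5,e_6$; these are pairwise distinct precisely when $\lambda\neq\pm 1$, and at $\lambda=1$ the weight spaces collide, producing extra cocycles that mix the two weight-$1$ (resp. weight-$(-1)$) vectors.

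\emph{Part (1).} For generic $\lambda$ I would first compute $\mathrm{H}^{2}(\mathfrak{g}_{2|4}^{3}(\lambda),\mathfrak{g}_{2|4}^{3}(\lambda))_{\bar 0}$, expecting a two-dimensional space as in the generic $\mathfrak{g}_{4|2}^{2}(\lambda)$ case: a weight-rescaling cocycle $f_1=e^{2,4}_4-e^{2,6}_6$, which increments the parameter $\lambda$, together with a second cocycle adjusting the even pairing. Next I impose the metric constraints. Since $\mathrm{dim}\,(\mathfrak{g}_{2|4}^{3}(\lambda))_{\bar 0}=2$, Lemma \ref{c}(1) forces the deformed even part to remain isomorphic to $\C_{2|0}$, and the invariance identity $B([e_i,e_j]_t,e_k)=B(e_i,[e_j,e_k]_t)$ kills the non-metric cocycle. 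The surviving cocycle satisfies $[f_1,f_1]=0$, so by the Remark following Lemma \ref{Ma1} it gives a real, obstruction-free metric infinitesimal deformation; I would then write down an explicit change of basis rescaling $e_3,e_4,e_5,e_6$ (and $e_1$) by suitable powers of $(\lambda+a_1 t)$, exactly as in Theorem \ref{dis}(1), realizing an isomorphism onto $\mathfrak{g}_{2|4}^{3}(\lambda+a_1 t)$. This exhibits the single smooth deformation within the family.

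\emph{Part (2).} At $\lambda=1$ the collision of weights yields additional cocycles, and I expect $\mathrm{H}^{2}_{\bar 0}$ to grow (by analogy with Theorem \ref{dis}(2)) to include, besides the rescaling cocycle, a Jordan cocycle of the shape $e^{2,4}_3$ completed by an $e^{4,5}_1$ term forced by invariance of $B$, together with its lowering counterpart involving $e^{2,5}_6$. After imposing the same metric conditions via Lemma \ref{c}(1) and invariance, I would split into cases on the surviving coefficients: the rescaling cocycle reproduces the smooth deformation of Part (1), while the Jordan cocycle is real and produces a jump deformation whose target is $\mathfrak{g}_{2|4}^{4}$, the algebra distinguished by the Jordan block $[e_2,e_4]=e_3+e_4$, $[e_2,e_5]=-e_5-e_6$ and the extra pairing $[e_4,e_5]=e_1$. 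I would verify this by a $t$-dependent change of basis that rescales $e_3,e_5$ (and correspondingly $e_1,e_4,e_6$) by powers of $t$, so that the infinitesimal Jordan term is promoted to an order-one bracket.

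The main obstacle is twofold. First, pinning down the complete list of cocycles at $\lambda=1$ is delicate: the resonance makes several central $e^{i,j}_1$ correction terms mandatory both for cocycle closure and for metricity (for instance $B([e_2,e_4]_t,e_5)=B(e_2,[e_4,e_5]_t)$ forces the coefficient of $e^{4,5}_1$ once $e^{2,4}_3$ is present), and these coefficients must be fixed by $\mathrm{d}f=0$ together with invariance before the case analysis can be trusted. Second, constructing the explicit jump isomorphism to $\mathfrak{g}_{2|4}^{4}$ requires the singular (as $t\to 0$) rescalings that turn a diagonal infinitesimal deformation into the non-diagonalizable Jordan structure, which is exactly what makes this deformation jump rather than vary smoothly. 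In contrast with the $\mathfrak{g}_{4|2}^{2}(\tfrac12)$ case, I expect the relevant metric cocycle here to be already real, so that checking $[f,f]=0$ directly---rather than having to solve $\mathrm{d}\phi_2=-\tfrac12[f,f]$ for a second-order term---is the cleanest route, and this accounts for there being a single jump target.
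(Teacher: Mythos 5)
Your overall strategy coincides with the paper's: compute the even $2$-cohomology, cut it down by the metricity constraints (Lemma \ref{c}(1) plus invariance), check $[f,f]=0$ to get real metric infinitesimal deformations, and realize them by explicit $t$-dependent changes of basis, exactly as in Theorem \ref{dis}. Part (1) of your plan matches the paper's proof: the paper uses the representative $f_1=e^{2,3}_3-e^{2,5}_5$ and lands on $\mathfrak{g}_{2|4}^{3}\bigl(\frac{\lambda}{1+a_1t}\bigr)$, while your $e^{2,4}_4-e^{2,6}_6$ is an equivalent representative moving the parameter to $\lambda+a_1t$; this difference is immaterial.

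The genuine gap is in your description of the cocycles at the resonant value $\lambda=1$, and it would derail the case analysis if taken at face value. First, $e^{2,4}_3$ --- with or without a central $e^{4,5}_1$ correction --- is \emph{not} a cocycle: on the triple $(e_2,e_4,e_5)$ the deformation of $[e_2,e_4]$ by $te_3$ produces $t[e_5,e_3]=te_1$ in the super Jacobi identity, and the only way to cancel it is to deform $[e_2,e_5]$ by $-te_6$ simultaneously, so that $[e_4,\,\cdot\,]$ produces the compensating $-t[e_4,e_6]=-te_1$; a central term $[e_4,e_5]_t=te_1$ only feeds into $[e_2,te_1]=0$ and cancels nothing. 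So closure ties your ``Jordan cocycle'' and its ``lowering counterpart'' into one and the same class, the paper's $f_3=e^{2,4}_3-e^{2,5}_6$, and the genuinely independent fourth class $f_4=e^{2,3}_4-e^{2,6}_5$ (the transposed Jordan direction) is missing from your list, even though the paper's Cases 3 and 4 show that each of $f_3$, $f_4$ alone is what jumps to $\mathfrak{g}_{2|4}^{4}$. Second, the central term $e^{4,5}_1$ is not ``mandatory for cocycle closure'': at $\lambda=1$ it is the coboundary $\mathrm{d}(e^5_6)$, hence cohomologically trivial; it is forced only if one insists that the \emph{original} form $B$ stay invariant, which the paper's notion of metric deformation does not require (the deformed algebra need only admit some, possibly $t$-dependent, invariant scalar product, and the paper's $f_3$, $f_4$ are metric in this sense as they stand). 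Finally, your binary split (rescaling $\Rightarrow$ smooth, Jordan $\Rightarrow$ jump) does not anticipate the mixed cases the paper must treat: its metricity constraint is $a_2=a_3a_4=0$, and the combinations $a_1f_1+a_4f_4$ and $a_1f_1+a_3f_3$ with $a_1\neq 0$ still deform smoothly inside the family to $\mathfrak{g}_{2|4}^{3}(1+a_1t)$, only the pure cocycles $a_3f_3$ or $a_4f_4$ giving the jump. You do flag that the coefficients must be pinned down by $\mathrm{d}f=0$ before the cases can be trusted, and your final targets (together with the observation that, unlike for $\mathfrak{g}_{4|2}^{2}(\frac{1}{2})$, all relevant metric cocycles here are real) are correct; but as written, the cohomological input on which Part (2) rests is wrong.
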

\begin{proof}
Note that the case $\lambda=0$ is generally excluded from this family, because $\mathfrak{g}_{2|4}^{3}(0)=\mathfrak{g}_{2|2}^{2}\bigoplus\C_{0|2}$. Beside that, in the cases $\lambda=\pm1$, the cohomology and deformation patterns are not generic.

(1)
The even part of the  2-cohomology space is  2-dimensional, spanned by the  representative  even cocycles:
$$
f_1=e^{2,3}_3-e^{2,5}_5,\ f_2=e^{1,2}_1-e^{2,5}_5-e^{2,6}_6.
$$
If the cocycle $\sum_{i=1}^{2}a_if_i$ defines a metric infinitesimal deformation, then we have
$a_2=0$. From $[f_1,f_1]=0$, the cocycle $a_1f_1$ ($a_1\neq 0$) defines a metric and real infinitesimal deformation isomorphic to $\mathfrak{g}_{2|4}^{3}\left(\frac{\lambda}{1+a_1t}\right)$ via the change of basis:
$$e'_{1}=(1+a_1t)e_1,\ e'_2= \frac{1}{1+a_1t} e_2,\ e'_3=(1+a_1t)e_3,\ e'_4=e_4,\ e'_5=e_5,\ e'_6=e_6.$$

(2)
Because of the i-isomorphism, it is sufficient to consider $\mathfrak{g}_{2|4}^{3}(1)$. The even part of the  2-cohomology space is  4-dimensional, spanned by the  representative  even cocycles:
$$
f_1=e^{2,3}_3-e^{2,5}_5,\ f_2=e^{1,2}_1-e^{2,5}_5-e^{2,6}_6,\ f_3=e^{2,4}_3-e^{2,5}_6,\ f_4=e^{2,3}_4-e^{2,6}_5.
$$
If the cocycle $\sum_{i=1}^{4}a_if_i$ defines a metric infinitesimal deformation, then we have that
$a_2=a_3a_4=0$. We determine nontrivial metric deformation in the following cases:

\emph{Case 1}: $a_1\neq 0$, $a_3=0$. The cocycle $a_1f_1+a_4f_4$ ($a_1\neq 0$) defines a metric and real infinitesimal deformation isomorphic to $\mathfrak{g}_{2|4}^{3}(1+a_1t)$ via the change of basis:
$$e'_1=e_1,\ e'_2=e_2,\ e'_3=e_4,\ e'_4=e_3+\frac{a_4}{a_1}e_4,\ e'_5=e_6-\frac{a_4}{a_1}e_5,\ e'_6=(1+a_1t)e_5.$$

\emph{Case 2}: $a_1a_3\neq 0$, $a_4=0$. The cocycle $a_1f_1+a_3f_3$ ($a_1a_3\neq 0$) defines a metric and real infinitesimal deformation isomorphic to $\mathfrak{g}_{2|4}^{3}(1+a_1t)$ via the change of basis:
$$e'_1=e_1,\ e'_2=e_2,\ e'_3=e_4-\frac{a_3}{a_1}e_3,\ e'_4=e_3,\ e'_5=e_6,\ e'_6=(1+a_1t)(e_5+\frac{a_3}{a_1}e_6).$$

\emph{Case 3}: $a_1=a_3=0$, $a_4\neq0$. The cocycle $a_4f_4$ ($a_4\neq 0$) defines a metric and real infinitesimal deformation isomorphic to $\mathfrak{g}_{2|4}^{4}$ via the change of basis:
$$e'_1=a_4te_1,\ e'_2=e_2,\ e'_3=a_4te_4,\ e'_4=e_3,\ e'_5=e_6+a_4te_5,\ e'_6=a_4te_5.$$

\emph{Case 4}: $a_1=a_4=0$, $a_3\neq0$. The cocycle $a_3f_3$ ($a_3\neq 0$) defines a metric and real infinitesimal deformation isomorphic to $\mathfrak{g}_{2|4}^{4}$ via the change of basis:
$$e'_1=a_3te_1,\ e'_2=e_2,\ e'_3=a_3te_3,\ e'_4=e_4,\ e'_5=e_5+a_3te_6,\ e'_6=a_3te_6.$$
\end{proof}
\begin{thm}
The algebra $\mathfrak{g}_{2|4}^{4}$ has only one metric deformation, which is a smooth deformation to the family $\mathfrak{g}_{2|4}^{3}(\lambda)$ around $\lambda=1$.
\end{thm}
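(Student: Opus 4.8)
The plan is to repeat, for $\mathfrak{g}_{2|4}^{4}$, the four-step scheme used in the preceding theorems of this subsection. First I would compute the even part of the second cohomology $\mathrm{H}^{2}(\mathfrak{g}_{2|4}^{4},\mathfrak{g}_{2|4}^{4})_{\bar{0}}$ by solving $\mathrm{d}(f)=0$ over the even $2$-cochains and factoring out $\mathrm{d}(\mathrm{C}^{1})$. Since $\mathrm{ad}\,e_{2}$ acts on the odd part $\{e_{3},e_{4},e_{5},e_{6}\}$ through two Jordan blocks, at the eigenvalues $1$ and $-1$, I expect a relatively small cohomology space containing one distinguished representative $f$ whose role is to separate the repeated eigenvalues of $\mathrm{ad}\,e_{2}$; this is the direction pointing into the diagonal family $\mathfrak{g}_{2|4}^{3}(\lambda)$ near $\lambda=1$.

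Next I would impose the metric condition on a general infinitesimal deformation $\sum_{i}a_{i}f_{i}$. Because $\dim(\mathfrak{g}_{2|4}^{4})_{\bar{0}}=2$, Lemma~\ref{c}(1) forces the deformed even part to remain isomorphic to $\C_{2|0}$, so no even bracket may be switched on; this annihilates every cocycle carrying an $e^{1,2}_{k}$ term. The surviving odd-part directions are then further constrained by the invariance identities $B([x,y]_{t},z)=B(x,[y,z]_{t})$ of Proposition~\ref{dim}, and I expect these to reduce the parameters to a single free coefficient $a$ attached to the distinguished cocycle $f$, which already accounts for the \emph{uniqueness} asserted in the statement.

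Then I would verify that $f$ is \emph{real}, i.e.\ $[f,f]=0$ in the graded bracket on cochains; by the remark following Lemma~\ref{Ma1} this allows the infinitesimal deformation to extend to a genuine metric deformation with all higher terms set to zero. Finally I would exhibit an explicit $t$-dependent change of basis $e'_{i}$ identifying $(\mathfrak{g}_{2|4}^{4},[\ ,\ ]_{t})$ with $\mathfrak{g}_{2|4}^{3}(1+at)$ (equivalently, a member of the family whose parameter tends to $1$ as $t\to 0$), which shows the deformation is smooth and located around $\lambda=1$.

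The main obstacle will be the cohomology computation itself: enumerating all even $2$-cocycles and quotienting correctly by coboundaries is a sizeable linear-algebra task, and one must take care that the chosen representatives are genuinely independent modulo $\mathrm{B}^{2}$. A secondary subtlety is guessing the correct change of basis: since $\mathfrak{g}_{2|4}^{4}$ is precisely the non-diagonalizable (jump) limit of $\mathfrak{g}_{2|4}^{3}(\lambda)$ at $\lambda=1$ produced in the preceding theorem, the substitution must simultaneously diagonalize the perturbed $\mathrm{ad}\,e_{2}$ and rescale the odd generators so that the normalized brackets $[e_{3},e_{5}]=e_{1}$ and $[e_{4},e_{6}]=\lambda e_{1}$ of the target family are recovered exactly.
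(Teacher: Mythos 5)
Your plan follows the paper's proof step for step: the even $2$-cohomology is indeed $2$-dimensional, the metric condition kills the cocycle containing the $e^{1,2}_{1}$ term (leaving $a_{2}=0$), the surviving cocycle $f_{1}=e^{2,3}_{4}-e^{2,6}_{5}+e^{3,6}_{1}$ satisfies $[f_{1},f_{1}]=0$ and so is real, and the deformation is identified with the family $\mathfrak{g}_{2|4}^{3}(\lambda)$ by a basis change that diagonalizes the perturbed $\mathrm{ad}\,e_{2}$, exactly as you anticipate. The only detail off from the paper is the parameter value: it is $\frac{1-\sqrt{a_{1}t}}{1+\sqrt{a_{1}t}}$ (involving $\sqrt{t}$) rather than $1+at$, but this still tends to $1$ as $t\to 0$, which your hedged formulation covers.
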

\begin{proof}
The  even part of the  2-cohomology space is  2-dimensional, spanned by the  representative  even cocycles:
$$f_1=e^{2,3}_4-e^{2,6}_5+e^{3,6}_1,\ f_2=e^{1,2}_1-2e^{2,6}_6+e^{3,6}_1+e^{4,6}_1.$$
If the cocycle $\sum_{i=1}^{2}a_if_i$ defines a metric infinitesimal deformation, then we have
$a_2=0$. From $[f_1,f_1]=0$, the cocycle $a_1f_1$ ($a_1\neq 0$) defines a metric and real infinitesimal deformation isomorphic to $\mathfrak{g}_{2|4}^{3}\left(\frac{1-\sqrt{a_1t}}{1+\sqrt{a_1t}}\right)$ via the change of basis:
\begin{eqnarray*}
&&e'_{1}=-2\sqrt{a_1t}(1+\sqrt{a_1t})e_1,\ e'_2= \frac{1}{\sqrt{a_1t}+1}e_2,\ e'_3=e_3+\sqrt{a_1t}e_4, \\
&& e'_4=-e_3+\sqrt{a_1t}e_4,\ e'_5=-e_6-\sqrt{a_1t}e_5,\ e'_6=-e_6+\sqrt{a_1t}e_{5}
\end{eqnarray*}
\end{proof}

\begin{thm}
The algebra $\mathfrak{g}_{2|4}^{5}$ has jump metric deformations to  $\mathfrak{g}_{2|4}^{2}$ and $\mathfrak{g}_{2|4}^{3}(\sqrt{-1})$ and it has a smooth metric deformation to the family $\mathfrak{g}_{2|4}^{3}(\lambda)$ around $\lambda=\sqrt{-1}$.
\end{thm}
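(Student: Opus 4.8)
The plan is to reuse the template applied to the preceding $2|4$-dimensional algebras. First I would compute the even part of the second cohomology $\mathrm{H}^{2}(\mathfrak{g}_{2|4}^{5},\mathfrak{g}_{2|4}^{5})_{\bar 0}$ by solving $\mathrm{d}(f)=0$ on the even $2$-cochains and factoring out the even coboundaries $\mathrm{d}(\mathrm{C}^{1})_{\bar 0}$, so as to obtain an explicit basis of representative even cocycles $f_1,\dots,f_r$. Since $\mathfrak{g}_{2|4}^{5}$ is the newly discovered algebra absent from \cite{class}, this is the step that carries the real work, and I would cross-check every cocycle against the defining brackets $[e_2,e_3]=e_5$, $[e_2,e_4]=e_3$, $[e_2,e_5]=-e_6$, $[e_3,e_3]=-e_1$, $[e_4,e_5]=e_1$.

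Next I would write a general infinitesimal deformation as $\sum_i a_i f_i$ and impose metricity. Two conditions do the work: by Lemma \ref{c}(1) the deformed even part $\langle e_1,e_2\rangle$ must stay isomorphic to $\C_{2|0}$, hence Abelian, which annihilates any direction creating an $[e_1,e_2]_t$ term; and invariance of a scalar product, tested through identities $B([e_i,e_j]_t,e_k)=B(e_i,[e_j,e_k]_t)$ against the given form ($B(e_1,e_2)=1$, $B(e_3,e_5)=B(e_4,e_6)=1$), forces linear relations among the $a_i$. I expect these to reduce the admissible directions precisely to the three that produce the advertised deformations.

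Finally, for each surviving metric infinitesimal deformation I would test reality via the graded bracket of Section 2: where $[\phi_1,\phi_1]=0$ the deformation integrates with $\phi_i=0$ for $i\ge 2$ (and, should some direction fail to be real, I would instead solve $\mathrm{d}(\phi_2)=-\frac{1}{2}[\phi_1,\phi_1]$ to extend it to a metric deformation, as in Theorem \ref{dis}), and I would exhibit an explicit change of basis identifying the deformed algebra with its target, checking that this change also intertwines the invariant forms so the deformation is genuinely metric. The direction whose deformed structure constants depend on $t$ through a parameter $\lambda(t)=\sqrt{-1}+O(t)$ yields the smooth deformation into the family $\mathfrak{g}_{2|4}^{3}(\lambda)$ around $\lambda=\sqrt{-1}$, while the directions landing on the fixed algebras $\mathfrak{g}_{2|4}^{2}$ and $\mathfrak{g}_{2|4}^{3}(\sqrt{-1})$ give the two jump deformations; the value $\sqrt{-1}$ should fall out of a quadratic relation rooted in the opposite signs of $e_1$ in $[e_3,e_3]=-e_1$ and $[e_4,e_5]=e_1$. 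The hard part will be the cohomology computation together with arranging the changes of basis to realize the algebra isomorphisms and simultaneously carry one invariant scalar product to another, so that metricity---not merely abstract isomorphism---holds.
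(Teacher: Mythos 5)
Your proposal follows exactly the paper's template and matches its actual proof: the paper computes a $4$-dimensional even $2$-cohomology (spanned by $f_1,\dots,f_4$), imposes metricity to force $a_1=a_2=0$, verifies the surviving directions are real, and exhibits explicit changes of basis sending $a_3f_3+a_4f_4$ to $\mathfrak{g}_{2|4}^{3}(\beta)$ with $\beta=\sqrt{\alpha^{2}a_3t-1}$ (jump to $\mathfrak{g}_{2|4}^{3}(\sqrt{-1})$ when $a_3=0$, smooth around $\sqrt{-1}$ when $a_3\neq 0$) and $a_3f_3$ alone to $\mathfrak{g}_{2|4}^{2}$, with $\sqrt{-1}$ indeed emerging from the quadratic/quartic relation you anticipate. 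Your fallback of solving $\mathrm{d}(\phi_2)=-\frac{1}{2}[\phi_1,\phi_1]$ turns out to be unnecessary here (all metric infinitesimal deformations of $\mathfrak{g}_{2|4}^{5}$ are real), but the plan is otherwise the same argument.
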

\begin{proof}
The  even part of the  2-cohomology space is  4-dimensional, spanned by the  representative  even cocycles:
$$f_1=e^{1,2}_1+\frac{1}{2}e^{3,5}_1+\frac{3}{2}e^{4,6}_1,\ f_2=e^{2,4}_4+e^{4,6}_1,\ f_3=e^{2,5}_3+e^{5,5}_1,\ f_4=e^{2,6}_4+e^{6,6}_1.$$
If the cocycle $\sum_{i=1}^{4}a_if_i$ defines a metric infinitesimal deformation, then we have that
$a_1=a_2=0$. We determine nontrivial metric deformation in the following cases:

\emph{Case 1}: $a_4\neq 0$. Set $\alpha, \beta\in \C\backslash\{0\}$, such that
$a_4t\alpha^{4}-a_3t\alpha^{2}+1=0$ and $\beta=\sqrt{\alpha^{2}a_3t-1}$.
The cocycle $a_3f_3+a_4f_4$ ($a_4\neq 0$) defines a metric and real infinitesimal deformation isomorphic to $\mathfrak{g}_{2|4}^{3}(\beta)$ via the change of basis:
\begin{eqnarray*}
&&e'_1=2(a_3t\alpha^{2}-2)e_1,\ e'_2=\alpha e_2,\ e'_3=e_3-a_4t\alpha^{3}e_4+\alpha e_5-\alpha^{2}e_6,\\
&&e'_4=-\beta^{3}e_3+a_4t\alpha^{3}e_4-\alpha\beta^{2}e_5+\alpha^{2}\beta e_6,\ e'_5=e_3+\alpha^{3}a_4te_4-\alpha e_5-\alpha^{2}e_6,\\ &&e'_6=e_3+\frac{\alpha^{3}}{\beta^{3}}a_4te_4-\frac{\alpha}{\beta}e_5-\frac{\alpha^{2}}{\beta^{2}}e_6.
\end{eqnarray*}
In particular, if $a_3=0$, we get a jump metric deformation to $\mathfrak{g}_{2|4}^{3}(\sqrt{-1})$. If $a_3\neq 0$, we get a smooth metric deformation to the family $\mathfrak{g}_{2|4}^{3}(\lambda)$ around $\lambda=\sqrt{-1}$.

\emph{Case 2}: $a_4=0$, $a_3\neq 0$. The cocycle $a_3f_3$ ($a_3\neq 0$) defines a metric and real infinitesimal deformation isomorphic to $\mathfrak{g}_{2|4}^{2}$ via the change of basis:
 \begin{eqnarray*}
 &&e'_1=-2e_1,\ e'_2=\frac{1}{\sqrt{a_3t}}e_2,\ e'_3=-\frac{\sqrt{2}}{a_3t}e_{6},\ e'_4=e_3+\frac{1}{\sqrt{a_3t}}e_5-\frac{1}{a_3t}e_{6}, \\
&& e'_5=\sqrt{\frac{2}{a_3t}}e_5-\sqrt{2a_3t}e_{4},\
e'_6=e_{3}-\frac{1}{\sqrt{a_3t}}e_5-\frac{1}{a_3t}e_{6}.
 \end{eqnarray*}
\end{proof}

\begin{conclusion}
In dimension $2|4$, every metric infinitesimal deformation is real.
We summarize the  metric deformation picture of $2|4$-dimensional indecomposable metric Lie superalgebras in the table below.
$$\begin{array}{llll}
   \mathrm{Algebra} & \mathrm{dim}\ \mathrm{H}_{\bar{0}}^{2}&\mathrm{Jump\ deformation} & \mathrm{Smooth\ deformation} \\
     \cline{1-4}
\mathfrak{g}_{2|4}^{1}&9 &\mathfrak{g}_{2|4}^{3}(0),\ \mathfrak{g}_{2|4}^{2},\ \mathfrak{g}_{2|4}^{3}(\pm1),&\mathfrak{g}_{2|4}^{3}(0),\ \mathfrak{g}_{2|4}^{3}(\pm1),\ \mathfrak{g}_{2|4}^{3}(\sqrt{-1})\\ &&\mathfrak{g}_{2|4}^{3}(\sqrt{-1}),\ \mathfrak{g}_{2|4}^{4},\ \mathfrak{g}_{2|4}^{5}&\\
 \mathfrak{g}_{2|4}^{2} & 3  &  - &\mathfrak{g}_{2|4}^{3}(0)=\mathfrak{g}_{2|2}^{2}\bigoplus \C_{0|2}\\
\mathfrak{g}_{2|4}^{3}(\lambda),\lambda\neq 0,\pm 1& 2&-&\mathfrak{g}_{2|4}^{3}(\lambda)\\
\mathfrak{g}_{2|4}^{3}(1)\mathop{\simeq}\limits^{i}\mathfrak{g}_{2|4}^{3}(-1)& 4& \mathfrak{g}_{2|4}^{4}&\mathfrak{g}_{2|4}^{3}(\pm1)\\
 \mathfrak{g}_{2|4}^{4}&  2 &-    &\mathfrak{g}_{2|4}^{3}(\pm1)\\
 \mathfrak{g}_{2|4}^{5}& 4  & \mathfrak{g}_{2|4}^{2},\ \mathfrak{g}_{2|4}^{3}(\sqrt{-1}) &\mathfrak{g}_{2|4}^{3}(\sqrt{-1})\\
 \cline{1-4}
 \end{array}$$
In particular, we get the picture of jump metric deformations as follows:
$$\xymatrix{
                &        \mathfrak{g}_{2|4}^{1}\ar[dr]\ar[dl] \ar[d]&  &   \\
  \mathfrak{g}_{2|4}^{3}(0)  & \mathfrak{g}_{2|4}^{3}(\pm1)\ar[d]  &  \mathfrak{g}_{2|4}^{5}\ar[d]\ar[dr]&  \\
 & \mathfrak{g}_{2|4}^{4}& \mathfrak{g}_{2|4}^{2} & \mathfrak{g}_{2|4}^{3}(\sqrt{-1}) }
$$
where the down arrows show jump metric deformations.
 \end{conclusion}

\small\noindent \textbf{Acknowledgment}\\
The author was supported by the CSC (No.201906170132) and the NSF of China (11771176). He  would like to thank Prof. Alice Fialowski for useful discussions.

\end{document}